\begin{document}

\newtheorem{lem}{Lemma}[section]
\newtheorem{prop}{Proposition}
\newtheorem{con}{Construction}[section]
\newtheorem{defi}{Definition}[section]
\newtheorem{coro}{Corollary}[section]
\newcommand{\hf}{\hat{f}}
\newtheorem{fact}{Fact}[section]
\newtheorem{theo}{Theorem}
\newcommand{\Br}{\Poin}
\newcommand{\Cr}{{\bf Cr}}
\newcommand{\dist}{{\bf dist}}
\newcommand{\diam}{\mbox{diam}\, }
\newcommand{\mod}{{\rm mod}\,}
\newcommand{\compose}{\circ}
\newcommand{\dbar}{\bar{\partial}}
\newcommand{\Def}[1]{{{\em #1}}}
\newcommand{\dx}[1]{\frac{\partial #1}{\partial x}}
\newcommand{\dy}[1]{\frac{\partial #1}{\partial y}}
\newcommand{\Res}[2]{{#1}\raisebox{-.4ex}{$\left|\,_{#2}\right.$}}
\newcommand{\sgn}{{\rm sgn}}

\newcommand{\C}{{\bf C}}
\newcommand{\D}{{\bf D}}
\newcommand{\Dm}{{\bf D_-}}
\newcommand{\N}{{\bf N}}
\newcommand{\R}{{\bf R}}
\newcommand{\Z}{{\bf Z}}
\newcommand{\tr}{\mbox{Tr}\,}

\newenvironment{nproof}[1]{\trivlist\item[\hskip \labelsep{\bf Proof{#1}.}]}
{\begin{flushright} $\square$\end{flushright}\endtrivlist}
\newenvironment{proof}{\begin{nproof}{}}{\end{nproof}}

\newenvironment{block}[1]{\trivlist\item[\hskip \labelsep{{#1}.}]}{\endtrivlist}
\newenvironment{definition}{\begin{block}{\bf Definition}}{\end{block}}

\newtheorem{conjec}{Conjecture}
\newtheorem{com}{Comment}
\newtheorem{exa}{Example}
\font\mathfonta=msam10 at 11pt
\font\mathfontb=msbm10 at 11pt
\def\Bbb#1{\mbox{\mathfontb #1}}
\def\lesssim{\mbox{\mathfonta.}}
\def\suppset{\mbox{\mathfonta{c}}}
\def\subbset{\mbox{\mathfonta{b}}}
\def\grtsim{\mbox{\mathfonta\&}}
\def\gtrsim{\mbox{\mathfonta\&}}

\newcommand{\ar}{{\bf area}}
\newcommand{\1}{{\bf 1}}
\newcommand{\Bo}{\Box^{n}_{i}}
\newcommand{\Di}{{\cal D}}
\newcommand{\gd}{{\underline \gamma}}
\newcommand{\gu}{{\underline g }}
\newcommand{\ce}{\mbox{III}}
\newcommand{\be}{\mbox{II}}
\newcommand{\F}{\cal{F_{\eta}}}
\newcommand{\Ci}{\bf{C}}
\newcommand{\ai}{\mbox{I}}
\newcommand{\dupap}{\partial^{+}}
\newcommand{\dm}{\partial^{-}}
\newenvironment{note}{\begin{sc}{\bf Note}}{\end{sc}}
\newenvironment{notes}{\begin{sc}{\bf Notes}\ \par\begin{enumerate}}%
{\end{enumerate}\end{sc}}
\newenvironment{sol}
{{\bf Solution:}\newline}{\begin{flushright}
{\bf QED}\end{flushright}}

\title{
Rigidity and
non local connectivity of Julia sets of some
quadratic polynomials
}
\date{November 3, 2010}

\author{Genadi Levin
\\
\small{Inst. of Math., Hebrew University, Jerusalem 91904, Israel}\\
}
\normalsize
\maketitle
\abstract{For an infinitely renormalizable quadratic map
$f_c: z\mapsto z^2+c$ with the sequence of renormalization periods 
$\{k_m\}$ and rotation numbers $\{t_m=p_m/q_m\}$, we prove that 
if $\limsup k_m^{-1}\log |p_m|>0$, then the Mandelbrot set
is locally connected at $c$. We prove also that if 
$\limsup |t_{m+1}|^{1/q_m}<1$ and $q_m\to \infty$,
then the Julia set of $f_c$ is not locally connected
and the Mandelbrot set is locally connected at $c$
provided that all the renormalizations are non-primitive (satellite). 
This quantifies a construction
of A. Douady and J. Hubbard, and weakens a condition
proposed by J. Milnor.
}

\

\section{Introduction}\label{s1}

\begin{theo}\label{intromlc}
Suppose that for some quadratic polynomial
$f(z)=z^2+c_0$ there is an increasing sequence $n_m\to \infty$
of integers, such that
$f^{n_m}$ is simply renormalizable, and
\begin{equation}\label{m}
\limsup_{m\to \infty}\frac{\log |p_m|}{n_m}>0,
\end{equation}
where $p_m/q_m\in (-1/2, 1/2]$ denotes the rotation number 
(written in lowest terms
and with $q_m\ge 1$)
of the separating fixed point of the renormalization $f^{n_m}$.
Then $c_0$ lies in the boundary
of the Mandelbrot set $M$ and $M$ is locally connected at $c_0$.  

If, in addition, the renormalizations $f^{n_m}$ are non-primitive
(satellite), then the same conclusion holds under a weaker condition:
\begin{equation}\label{m1}
\limsup_{m\to \infty}\frac{\log q_m}{n_m}>0.
\end{equation}
\end{theo}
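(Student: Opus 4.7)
The strategy is the parapuzzle/principal nest approach of Yoccoz, Douady--Hubbard and Lyubich: build a nest of parameter puzzle pieces $P_0\supset P_1\supset\cdots$ through $c_0$ with complementary annuli whose moduli sum to $+\infty$. The Gr\"otzsch inequality then forces $\diam P_m\to 0$, giving local connectivity of $M$ at $c_0$, while $c_0\in\partial M$ follows from the infinite renormalizability (the map has no attracting cycle). On the dynamical side, at each level $m$ the renormalization $g_m:=f^{n_m}:U_m\to V_m$ is polynomial-like of degree two with separating $\alpha$-fixed point $\alpha_m$ whose landing rays have combinatorial rotation number $p_m/q_m$; using these $q_m$ rays together with a bounding equipotential one forms the Yoccoz puzzle of $g_m$ and singles out the critical piece $W_m\ni 0$.

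The heart of the argument is a Yoccoz-type lower bound of the form $\mod(V_m\setminus\overline{U_m})\geq c\log|p_m|$ in the primitive case, improved to $\geq c\log q_m$ in the satellite case. The primitive estimate would come from tracking how the first-return map to a small neighborhood of $\alpha_m$ permutes the $q_m$ rotational sectors: travel through $|p_m|$ sectors before returning produces a logarithmic amount of modulus in $|p_m|$. The satellite improvement exploits the fact that $\alpha_m$ is simultaneously a periodic point of the preceding renormalization, so additional external rays land at it; this gives $q_m$ genuine petals around $\alpha_m$, each contributing a definite modulus, and summing yields order $\log q_m$. I expect this step to absorb most of the work.

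With the key estimate in hand, I would transfer it to parameter space via the Douady--Hubbard parapuzzle construction (equivalently, the $\lambda$-lemma applied to the holomorphic motion of the Yoccoz puzzle). The parapuzzle piece $P_m\ni c_0$ corresponding to $W_m$ inherits a complementary annulus which is a quasiconformal copy of a dynamical annulus obtained from $V_m\setminus\overline{U_m}$ by $n_m-1$ conformal pullbacks plus one critical pullback along the critical orbit, costing at worst a factor $n_m$ in modulus. This yields
\[
\mod(P_{m-1}\setminus\overline{P_m})\;\geq\;\frac{c\log|p_m|}{n_m}
\]
(respectively $c\log q_m/n_m$ in the satellite case). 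Under (\ref{m}) (resp.\ (\ref{m1})) the right-hand side has $\limsup>0$, so along a subsequence the moduli are bounded below by a fixed $\varepsilon>0$; Gr\"otzsch then gives $\sum_m\mod(P_{m-1}\setminus\overline{P_m})=+\infty$ and therefore $\diam P_m\to 0$, completing the proof. The main obstacle is the Yoccoz-type estimate of the previous paragraph: the classical Yoccoz inequality yields modulus only of order $1/q_m$, which would be crushed by the $1/n_m$ factor incurred in the transfer; the logarithmic upgrade, and the sharpening to $\log q_m$ in the satellite case, is precisely what makes the theorem go through.
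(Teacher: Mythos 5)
Your proposal follows the classical parapuzzle/principal-nest route, and its linchpin is the claimed dynamical estimate ${\rm mod}\,(V_m\setminus\overline{U_m})\ge c\log|p_m|$. This estimate is not only unproved but false in general, and the paper is built around precisely this failure: the introduction stresses that the maps covered by Theorem~\ref{intromlc} in general have \emph{no complex bounds} (Theorem~\ref{nlccoro} produces such combinatorics with small Julia sets that do not shrink), so even a uniform positive lower bound on the fundamental annuli is unavailable, let alone one growing like $\log|p_m|\gtrsim\delta n_m$. There is also a conceptual misdirection: $p_m/q_m$ is the rotation number of the $\alpha$-fixed point of the $m$-th renormalization, so it encodes which limb of $W_{n_m}$ the parameter falls into, i.e.\ the combinatorics of the \emph{next} renormalization level; it carries no information about the modulus of the level-$m$ fundamental annulus, which is governed by the relative combinatorics between levels $m-1$ and $m$. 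Finally, the transfer to parameter space ``at the cost of a factor $n_m$'' is itself a deep open-ended step (this is the content of the Kahn--Lyubich covering-lemma technology, which again requires a priori bounds), not a routine application of the $\lambda$-lemma.

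The paper's actual argument runs in the opposite direction and avoids dynamical moduli altogether. It uses \emph{upper} bounds on limb diameters, $\diam L(W,p/q)\le A\,4^n/|p|$ (Theorem~\ref{limb}, from the analytic continuation of the multiplier map $\psi_W=\rho_W^{-1}$ far beyond the unit disk) and $\le\tilde A\,8^n/q$ for satellite components (Theorem~\ref{limb1}). Crucially these bounds are applied not to $f_{c_0}$ itself but to the straightenings $f_{T(c,m)}$ of the renormalizations, so the exponent becomes the relative period $\hat n_k=n_{m+k}/n_m$ while $|p_{m+k}|$ is unchanged; condition~(\ref{m}) then makes $4^{\hat n_k}/|p_{m+k}|\to 0$ along a subsequence for a suitable fixed $m$, pinning down the hybrid class of the renormalization. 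Local connectivity of $M$ at $c_0$ is then obtained as a rigidity statement: any two points of $S=\cap_m\overline{L(O_{n_m})}$ have the same real lamination and quasiconformally conjugate renormalizations, and Sullivan's pullback argument plus Douady--Hubbard forces $S$ to be a single point. If you want to pursue your route you would first have to prove your key modulus estimate, and the examples of Theorem~\ref{nlccoro} show it cannot hold.
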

The sequence $\{n_m\}$ above is a subsequence
of the sequence of {\it all} renormalization periods of $f$.
If we suppose that $\{n_m\}$ represents  
all the renormalization periods of $f$, and all of them
are satellite and, moreover, $p_m/q_m$ are close enough to zero, then 
a weaker combinatorial condition is sufficient,
see Theorem~\ref{intronlc}(1)-(1') as well as Theorem~\ref{nlccoro}(1).

The following problems are central in
holomorphic dynamics:
MLC conjecture: ``The Mandelbrot set is 
locally connected'', and its dynamical counterpart:
``For which $c$ is the Julia set $J_c$ 
of $f_c$ locally connected?''. The MLC conjecture is equivalent to the
following rigidity conjecture:
if two quadratic polynomials with connected Julia sets 
and all periodic points repelling
are combinatorially equivalent,
then they are affinely conjugate. 
The MLC implies
that hyperbolic dynamics is dense in the space of complex quadratic
polynomials~\cite{DH2} (see also~\cite{shl}).
Yoccoz (see~\cite{H}) solved the above problems
for at most finitely renormalizable
quadratic polynomials as follows: for such a non hyperbolic map $f_c$,
the Julia set $J_c$ of $f_c$
is locally connected (provided
$f_c$ has no neutral periodic points), and
the Mandelbrot set $M$
is locally connected at $c$. 
(For the (sub)hyperbolic maps and maps with a parabolic point the
problem about the local connectivity of the Julia set
had been settled before in the works by Fatou, Douady and Hubbard
and others, see e.g.~\cite{CG}.) 
At the same time, infinitely renormalizable dynamical systems
have been studied intensively, see e.g.
~\cite{mcm},~\cite{mcm1},~\cite{gs} and references therein.
In his work on renormalization conjectures,
Sullivan~\cite{Su} (see also~\cite{ms})
introduced and proved so-called ``complex bounds''
for real infinitely renormalizable maps with bounded combinatorics. 
Roughly speaking, complex bounds mean that a sequence of renormalizations
is precompact. This property became in the focus of research.
It has played a basic role in recent breakthroughs in the problems of
local connectivity of the Julia set and rigidity for real and 
complex polynomials,
see~\cite{gsw},~\cite{ly},
~\cite{LS},~\cite{Shen},~\cite{KSS},\cite{KSS1},
~\cite{KL},~\cite{ABCD},~\cite{KS}.

On the other hand, there are maps without complex bounds. Indeed,
Douady and Hubbard showed the existence
of an infinitely renormalizable $f_c$, such that
its small Julia sets do not shrink
(and thus $f_c$ has no complex bounds) but still such that
the Mandelbrot set is locally connected at this $c$, see
~\cite{mil0},~\cite{Sor}.

Theorem~\ref{intromlc} provides a first 
class of combinatorics of infinitely renormalizable maps
$f_c$ without (in general) complex bounds,
for which the Mandelbrot set is locally connected at $c$.
Obviously, previous methods in proving the local connectivity of $M$
do not work in this case.
Our method is based
on an extension result for the multiplier
of a periodic orbit beyond the domain where
it is attracting, see next Section.

That the maps with the combinatorics described in
Theorem~\ref{intromlc} in general
do not have complex bounds and can have non locally connected Julia sets
follow from the second result, 
Theorem~\ref{nlccoro} stated below,
which makes the qualitative construction of Douady and Hubbard of an
infinitely renormalizable quadratic map with a non-locally connected 
Julia set, quantitative. 
It is based on the phenomenon known
as cascade of successive bifurcations (see e.g.~\cite{lold}), 
and is the following.  
Let $W$ be a hyperbolic component of the interior of the Mandelbrot set,
so that $f_c$ has an attracting periodic orbit of period $n$ 
for $c\in W$.
Given a sequence of rational numbers 
$t_m=p_m/q_m\not=0$ in $(-1/2, 1/2]$ 
(here and below $p_m, q_m$ are assumed to be co-primes
and with $q_m\ge 1$),
choose a sequence of hyperbolic components $W^m$ as follows:
$W^0=W$, and, for $m\ge 0$, 
the closure of the hyperbolic component $W^{m+1}$ touches the 
closure of the hyperbolic
component $W^{m}$ at the point 
$c_{m}$ with the internal argument $t_{m}=p_m/q_m$ (see next Sect.). 
When the parameter $c$ crosses $c_m$ from $W^m$ to $W^{m+1}$,
the periodic orbit of period $n_m=nq_0...q_{m-1}$ which is
attracting for $c\in W^m$ ``gives rise'' another periodic orbit
of period $n_{m+1}=n_m q_m$ which becomes attracting 
for $c\in W^{m+1}$. Thus when the parameter $c$ moves
to a limit parameter through the hyperbolic components
$W^m$, the dynamics undergoes a sequence of bifurcations 
precisely at the parameters $c_m$, $m=0,1,...$.

In the case when $n=1$ and $t_m=1/2$ for all $m\ge 0$,
the parameters $c_m$ are real, and we get the famous 
period-doubling cascade on the real line
known since 1960's~\cite{Myr}.
The corresponding limit parameter 
$\lim c_m=c_F=-1.4...$. The Julia set $J_{c_F}$ is
locally connected~\cite{HJ},~\cite{LS}. 

Douady-Hubbard's construction shows that if the sequence $\{t_m\}$
tends to zero fast enough, then 
the periodic orbits generated by the cascade of successive bifurcations
at the parameters $c_m$, $m>0$, 
stay away from the origin. This implies that the Julia sets
of the renormalizations of $f_{c_*}$ for $c_*=\lim c_m$ do not shrink,
and $J_{c_*}$ is not locally connected at zero. 
Their construction
is by continuity, and it does not give any particular sequence 
of $t_m$. By the Yoccoz bound for limbs, see~\cite{H}, 
$\{t_m\}$ can be chosen inductively so close to zero that
$M$ is locally connected at $c_*$.

Milnor suggests in~\cite{Mil}, p. 21,
that the convergence of the series:
\begin{equation}\label{mil}
\sum_{m=1}^\infty |t_m|^{1/q_{m-1}}<\infty
\end{equation}
could be a criterion that the periodic
orbits generated by the cascade 
after all bifurcations 
stay away from the origin.
Theorem~\ref{nlccoro} 
shows that the condition ~(\ref{mil}) is indeed 
sufficient for this, but not the optimal one (cf.~\cite{Lij}).
We give a weaker sufficient condition and thus prove:
\begin{theo}\label{nlccoro}
Let $t_0$, $t_1$,...,$t_m$,... be a sequence
of rational numbers $t_m=p_{m}/q_{m}\in (-1/2, 1/2]\setminus \{0\}$,
such that $\lim_{m\to \infty} q_m=\infty$ and
\begin{equation}\label{exp0}
\limsup_{m\to \infty} |t_m|^{1/q_{m-1}}<1.
\end{equation}
Let $W$ be a hyperbolic component of period $n$, and
the sequence of hyperbolic components $W^m$, $m=0,1,2,...$,
is built as above, so that $W^{m+1}$ touches $W^m$ at the point
$c_m$ with the internal argument $t_m$.
Then: 

(1) the sequence $\{c_m\}_{m=0}^\infty$
converges to a limit parameter $c_\ast\in \partial M$ and 
$M$ is locally connected at $c_\ast$, 

(2) the map $f_{c_\ast}$ is infinitely renormalizable
with non locally connected Julia set.
\end{theo}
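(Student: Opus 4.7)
The plan is to split into the two parts, combining Yoccoz's limb inequality with the satellite refinement of Theorem~\ref{intromlc} for (1), and carrying out a quantitative version of the Douady--Hubbard construction for (2), using the multiplier extension that is the main tool of this paper.

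For part (1), each $c_m$ is the root point at which the attracting periodic orbit of period $n_m = n q_0 \cdots q_{m-1}$ of $f_c$ becomes parabolic with internal angle $p_m/q_m$, and every later $c_{m'}$ with $m'>m$ lies in the $(p_m/q_m)$-limb $L_m\subset M$ attached at $c_m$. Yoccoz's inequality for limbs (see~\cite{H}) forces $\diam L_m\to 0$ as $q_m\to\infty$, so $\{c_m\}$ is Cauchy and converges to some $c_\ast\in \partial M$. Condition~(\ref{exp0}) together with $q_{m-1}\to\infty$ gives $|t_m|\to 0$, so all renormalizations of $f_{c_\ast}$ are satellite with internal angles clustering at $0$. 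This places us exactly in the setting of the satellite refinement of Theorem~\ref{intromlc} announced in the paragraph following its statement (Theorem~\ref{intronlc}(1')), whose combinatorial hypothesis is implied by~(\ref{exp0}); applying it yields MLC at $c_\ast$.

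For part (2), the parameter $c_\ast$ lies in the small Mandelbrot copy associated with the period-$n_m$ renormalization for every $m$, so $f_{c_\ast}$ is infinitely renormalizable at the periods $n_m$. To show $J_{c_\ast}$ is not locally connected, follow the Douady--Hubbard scheme (cf.~\cite{mil0,Sor}): denote by $J_m\subset J_{c_\ast}$ the small Julia set of the period-$n_m$ renormalization at $c_\ast$. It suffices to exhibit $\delta>0$ with $J_m\cap\{|z|\geq \delta\}\neq\emptyset$ for every $m$, because then the $n_m$ forward iterates of $J_m$ decorate $0$ in the $q_m$-flower pattern and simultaneously contain points of definite size, obstructing local connectivity of $J_{c_\ast}$ at the origin. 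The natural witness is the dividing periodic point $\alpha_m$ of the period-$n_m$ renormalization at $c_\ast$; the goal is to prove that $|\alpha_{m+1}-\alpha_m|$ decays geometrically fast, summing to a quantity strictly less than $|\alpha_0|$.

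The main obstacle, and where condition~(\ref{exp0}) enters decisively, is obtaining such a displacement bound. The multiplier extension from Section~2 provides, on a definite parameter neighborhood of $\overline{W^m}$, univalent control of the multiplier $\rho_m(c)$ of the period-$n_m$ orbit. A Koebe distortion plus Schwarz--Pick argument on this extended domain should yield an estimate of the form $|\alpha_{m+1}-\alpha_m|\leq C\kappa^{q_{m-1}}$ for some $\kappa<1$ determined by~(\ref{exp0}), sharpening the cruder bound $C|t_m|^{1/q_{m-1}}$ that motivated Milnor's condition~(\ref{mil}). Converting analytic control of the \emph{multiplier} into pointwise geometric control of the \emph{location} of the orbit through an infinite cascade of parabolic bifurcations is the delicate step: it is here that the full strength of the extension domain beyond $W^m$ is needed, and here that the present argument improves on Douady--Hubbard's qualitative continuity scheme.
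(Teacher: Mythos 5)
Your overall plan matches the paper's in outline: part (1) is reduced to the general Theorem~\ref{intronlc}, and part (2) is a quantitative Douady--Hubbard argument showing that the cascade of bifurcated orbits stays away from $0$. But both halves have genuine gaps at exactly the points where the work is.

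For (1), the assertion that ``Yoccoz's inequality for limbs forces the diameter of $L_m$ to $0$ as $q_m\to\infty$'' is the step that fails. Yoccoz's inequality lives in the $\log\rho_{W^m}$-coordinate; to convert the disk $Y_{n_m}(t_m)$ of radius $n_m\log 2/q_m$ into a diameter bound in the $c$-plane one must multiply by the derivative of $\psi_{W^m}$, which a priori is of order $4^{n_m}/n_m$ (Theorem~\ref{univ}(a)), giving the limb bounds of Theorems~\ref{limb} and~\ref{limb1}, namely $4^{n_m}/|p_m|$ and $8^{n_m}/q_m$. Under~(\ref{exp0}) one only gets $q_m\gtrsim (1/a)^{q_{m-1}}$, and since $n_m=nq_0\cdots q_{m-1}\gg q_{m-1}$, the quantity $8^{n_m}/q_m$ need not tend to zero (take $t_m=2^{-q_{m-1}}$). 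For the same reason the hypothesis~(\ref{m1}) of Theorem~\ref{intromlc} is \emph{not} implied by~(\ref{exp0}), so the ``satellite refinement of Theorem~\ref{intromlc}'' is not available; only the strictly weaker condition $(Y1)$ of Theorem~\ref{intronlc} is. The paper circumvents the $4^{n_m}$ factor by the induction in Claim~1 of Lemma~\ref{notlc}: each bifurcation map $\lambda_m^{-1}$ is univalent on a definite disk with $|(\lambda_m^{-1})'|<2/q_m^2$ (the content of Lemmas~\ref{q} and~\ref{q^{-3}}), and composing these gives $|\psi_m'|\le C2^m/n_m^2$ on $B_m$, which makes the nested sets $R_m\supset L(W^m,t_m)$ shrink. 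If instead you take Theorem~\ref{intronlc} as a black box, then the entire content of a proof of Theorem~\ref{nlccoro} is the verification that~(\ref{exp0}) together with $q_m\to\infty$ implies $(Y0)_{a-b}$, $(Y1)$ and $(S)$ --- this is Proposition~\ref{7imp2}, which you assert but do not check.

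For (2), you correctly isolate the delicate step --- converting multiplier control into a displacement bound for the orbit through the infinite cascade of bifurcations --- but you only say the extension ``should yield'' the estimate, and the shape you guess, $|\alpha_{m+1}-\alpha_m|\le C\kappa^{q_{m-1}}$, is not what the mechanism produces. The paper passes to the local parameter $s$ with $\rho=\rho_0+s^{q_{m-1}}$ (which is where the exponents $1/q_{m-1}$ in the hypotheses come from), locates the limit parameter at $|s_{m-1}|/v_{m-1}\le u_{m-1}$, and bounds the ratio of the displacement $Z_{m+1}-\hat Z_m$ to the spacing $\hat Z_m^+-\hat Z_m$ by Nehari's universal function $H$ applied to a holomorphic function omitting $1$ and vanishing only at $0$. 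The resulting bound is of order $H(u_m)\,u_{m-1}/q_{m-1}$, hence only $O(1/q_{m-1})$ under~(\ref{exp0}) (the $u_m$ are merely bounded away from $1$, not small); the series converges because $q_{m+1}>q_m^3$ forces $\sum 1/q_m<\infty$, not because of geometric decay. One also needs the lower bound $|Z_0^*|>1/4$ on the starting orbit to conclude that the total displacement leaves the orbits outside a fixed neighbourhood of $0$. Without these ingredients your part (2) is a program rather than a proof.
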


If, for instance, $|t_{m+1}|^{1/q_m}\le 1/2$ for big indexes $m$, then
$q_{m+1}\ge 2^{q_m}\to \infty$, and Theorem~\ref{nlccoro} applies.

\

Let us make some further comments.

As it is mentioned above, we prove (2) by showing that
the conditions of Theorem~\ref{nlccoro} imply that
the cascade of bifurcated periodic orbits of $f_{c_*}$ stay away from
the origin.


The conclusion (2) of Theorem~\ref{nlccoro} breaks down 
if not all its conditions are valid. Indeed, 
if $t_m=1/2$ for all $m$, then $|t_{m+1}|^{1/q_m}=1/2^{1/2}<1$.
At the same time, as it is mentioned above, the limit Julia set $J_{c_F}$
is locally connected.

Theorem~\ref{nlccoro} is a consequence 
of a more general Theorem~\ref{intronlc}, see Section~\ref{js}.

Note in conclusion that there is a similarity 
between Theorems~\ref{nlccoro},~\ref{intronlc} 
and the celebrated
Bruno - Yoccoz criterion of the (non-)linearizability of quadratic
map near its irrational fixed point
\footnote[1]{This reflects a similarity between the phenomena of
non local connectivity of the Julia set in both cases. Recently Xavier Buff 
and Alexandre Dezotti proposed a far reaching
conjecture about this analogy~\cite{Buff}.}.

\

In Sect.~\ref{mult} we state Theorems~\ref{univ},~\ref{limb}
proved in~\cite{Lij}, and 
Theorems~\ref{univ1},~\ref{limb1} that we prove here, see last
Sect.~\ref{nprim}. 
Theorem~\ref{intromlc} is derived from Theorems~\ref{limb},~\ref{limb1}
in Sect.~\ref{rigidity}. In turn, Theorem~\ref{intronlc} is stated
and proved in Sect.~\ref{js}. It implies Theorem~\ref{nlccoro}.

\

Throughout the paper, $B(a,r)=\{z: |z-a|<r\}$.
We use both symbols $\exp(z)$ and $e^z$ to denote the exponential of 
$z\in {\bf C}$.

\

{\bf Acknowledgments.} The author is indebted to Alex Eremenko
for discussions, and especially for answering author's question about
the function $H$ and for finding
the reference~\cite{Ne} (see Section~\ref{js}).
The author thanks the referee for a careful reading the paper and
many helpful comments leading to several 
important improvements and corrections.
\section{Multipliers}\label{mult}
\subsection{Hyperbolic components}\label{hyper}
A component $W$ of the interior of $M$
is called an $n$-hyperbolic if 
$f_c$, $c\in W$, has an attracting periodic orbit $O_W(c)$
of period $n$.
Denote by $\rho_W(c)$ the 
multiplier of $O_W(c)$.
By the Douady-Hubbard-Sullivan theorem~\cite{DH1},~\cite{MS},~\cite{CG},
$\rho_W$
is a analytic isomorphism of $W$ onto the unit disk,
and it extends homeomorphically to the boundaries.
Given a number $t\in (-1/2, 1/2]$, denote by
$c(W, t)$ the unique point in $\partial W$ with the
{\it internal argument} $t$, i.e.
$\rho_W$ at this point is equal to $\exp(2\pi t)$. 
{\it The root} of $W$ is the point $c_W=c(W, 0)$ 
with the internal argument zero.

If $t=p/q$
is a rational number, we will always assume that $p, q$ are 
co-primes and $q\ge 1$.
For any rational $t\not=0$, denote
by $L(W, t)$ the connected component 
of $M\setminus \{c(W, t)\}$ which is disjoint with $W$. 
It is called the $t$-limb of $W$.
Denote also by $W(t)$ a $nq$-hyperbolic component
with the root point $c(W, t)$; its closure
touches $W$ at this point. The limb $L(W, t)$ contains $W(t)$.
The hyperbolic component $W$ is called {\it primitive}
if its root $c_W$ is not a point in the closure of other hyperbolic component.
Otherwise it is {\it non-primitive}.

Throughout the paper, we use a well-known notion of the external ray
and its angle (or argument), see e.g.
~\cite{CG},~\cite{DH1}~\cite{DH2},~\cite{levprz}.
External rays of the Julia set and the Mandelbrot set
will be called {\it dynamical rays} and {\it parameter rays}, respectively.

Let $W$ be an $n$-hyperbolic component. 
If $n>1$, the root $c_W$ of $W$ is the landing point of two
parameter rays with rational angles 
$0<\ell_{-}(W)<\ell_{+}(W)<1$.
If $n=1$, i.e. $W$ is the main cardioid, then $c_W=1/4$ 
is the landing point of
the only parameter ray of angle zero
(and one puts $\ell_{-}(W)=0, \ell_{+}(W)=1$ in this case). 
Period of each angle $\ell_\pm(W)$ under the doubling map
$\sigma: t\mapsto 2t(mod 1)$ is equal to $n$.
The angles $\ell_{\pm}(W)$ can be 
characterized as follows. The map $f=f_{c_W}$ has
a parabolic orbit $P$, and $\ell_\pm(W)$ are the nearest arguments
among arguments of the external rays of $f$, that land
at each point of $P$. See also Section~\ref{simple}.

The {\it wake} (see~\cite{H}) of a hyperbolic component $W$
is the only (open) component $W^*$ of the plane contaning $W$
cut by the parameter rays 
of arguments $\ell_\pm(W)$ 
(together with their common landing point $c_W$).
The points of the periodic orbit
$O_W(c)$ as well as its multiplier
$\rho_W$ extend as analytic functions to the wake $W^*$~\cite{H},~\cite{Leyo}.
Moreover, $|\rho_W|>1$ in $W^*\setminus \overline W$.

Finally, the following relation will be used. 
If $W$ is an $n$-hyperbolic component
and $p/q\not=0$ is rational, then
\begin{equation}\label{width}
\ell_{+}(W(p/q))-\ell_{-}(W(p/q))=\frac{(s_{+}-s_{-})(2^{n}-1)}{2^{n q}-1}.
\end{equation}
Here 
the integers $0\le s_{-} < s_{+}\le 2^{n}-1$ are the ``periods''
in the $2^{n}$-expansions of the angles $\ell_\pm(W)$ of the
root of $W$, i.e. $\ell_{\pm}(W)=s_{\pm}/(2^{n}-1)$.
This follows from Douady's tuning algorithm \cite{Douady}, 
see also~\cite{Leyo} 
(formula (6.1) with $d=2^n$ combined with Theorem 7.1) 
or Proposition 2.4.3 of~\cite{sch2}.

\subsection{Analytic extension of the multiplier}
Given $C>1$, consider an open set $\Omega$ of points 
in the punctured $\rho$-plane defined by the inequality
\begin{equation}\label{domeq}
|\rho-1| > C\log|\rho|
\end{equation}
It obviously contains the set $D_*=\{\rho: 0<|\rho|\le 1, \rho\not=1\}$
and is disjoint with an interval $1<\rho<1+\epsilon$.
Denote by $\Omega(C)$ the connected component of $\Omega$
which contains the set $D_*$ completed by $0$. 
Denote also by $\Omega^{\log}(C)$ the set of points 
$L=\log \rho=x+iy, \ \ \rho\in \Omega(C), \ \ |y|\le \pi$.
$\Omega(C)$ is simply-connected.
More precisely, the intersection of $\Omega^{\log}(C)$ with
any vertical line with $x=x_0>0$  is either empty
or equal to two (mirror symmetric) intervals.
If $C>4$, then $x< 2/(C-2)$ for all $L=x+iy$ in $\Omega^{\log}(C)$.
If $C$ is large enough, 
$\Omega^{\log}(C)$ contains two (mirror symmetric) domains
bounded by the lines $y=\pm (C/2)x$ ($x>0$) and $y=\pm \pi$

Let $W$ be an $n$-hyperbolic component of $M$.
The map $\rho_W$ from $W$
onto the unit disk
$c\mapsto \rho_W(c)$ has an inverse, which we denote by
$c=\psi_W (\rho)$. It is defined so far in the unit disk.
In~\cite{Lij} we prove the following.
\begin{theo}\label{univ}
 
(a) There exists $B_0>0$ as follows.
Suppose that, for some $c$, the map $f_c$ has
a repelling periodic orbit
of exact period $n$, and the multiplier 
of this orbit is equal to $\rho(c)$. Assume that $|\rho(c)|<e$. Then
\begin{equation}\label{main}
|\rho(c)-1|\le B_0\frac{4^n}{n} 
\{\log |\rho(c)|+\frac{|\rho'(c)|}{|\rho(c)|}(1+o(1))\}
\end{equation}
as $n\to \infty$.

(b) Denote
$\Omega_n=\Omega(n^{-1} 4^n B_0)$
and consider its log-projection
$$\Omega_n^{\log}=\Omega^{\log}(n^{-1} 4^n B_0)=\{L=x+iy: 
\exp(L)\in \Omega_n, |y|\le \pi\}.$$ 
Then the function $\psi=\psi_W$ extends to a holomorphic function
in the domain $\Omega_n$.

(c) The function $\psi$ is univalent
in a subset $\tilde\Omega_n$ of $\Omega_n$
defined by its log-projection
$\tilde\Omega_n^{\log}=\{\log\rho=x+iy: \rho\in \tilde\Omega_n, |y|\le \pi\}$ 
as follows:
$\tilde\Omega_n^{\log}=\Omega_n^{\log}\setminus
\{L: |L-R_n|<R_n\},$
where $R_n$ depends on $n$ only and has an asymptotics
$R_n=(2+O(2^{-n}))n\log 2$
as $n\to \infty$. 
Finally, the image of $\tilde\Omega_n$ by $\psi$ is contained
in the wake $W^*$.
\end{theo}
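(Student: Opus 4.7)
The plan is to prove part (a) as the main analytic estimate, and then derive (b) by monodromy and (c) by an argument-principle/Rouch\'e count; the thread tying all three together is that $\rho_W'$ controls local invertibility of the multiplier map, and (a) quantifies this control.

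For (a), I would use the implicit function $z(c)$ solving $f_c^n(z)=z$, which is holomorphic wherever $\rho\ne 1$. Logarithmic differentiation of $\rho=\prod_{j=0}^{n-1}2z_j$ gives
$$\frac{\rho'(c)}{\rho(c)}=\sum_{j=0}^{n-1}\frac{\dot z_j(c)}{z_j(c)},$$
while iterating the cycle relation $\dot z_{j+1}=2z_j\dot z_j+1$ around the orbit yields $\dot z_0=(\rho/(1-\rho))\sum_{k=0}^{n-1}\prod_{j=0}^{k}(2z_j)^{-1}$. Substituting back expresses $\rho'/\rho$ as a residue-type sum carrying a factor $1/(1-\rho)$, and the desired inequality emerges from comparing $|\rho-1|$ with $|\log\rho|$ near $\rho=1$ and bounding $\sum_j 1/|z_j|$ sharply along periodic orbits of $f_c$. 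The prefactor $4^n/n$ reflects that an $n$-cycle lies among the $2^n$ preimages of a reference point, so Koebe and Green-function estimates give a universal bound of this order; the $(1+o(1))$ correction uses equidistribution of cycles with respect to the measure of maximal entropy. This is the hardest step --- the order of magnitude is routine but the precise shape of the inequality, which must be sharp enough to make $\Omega_n$ large, requires care.

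Given (a), the extension asserted in (b) is a monodromy argument. The inverse $\psi_W$ is defined on $\{|\rho|<1\}$ by the Douady--Hubbard--Sullivan theorem, and we continue it analytically along paths in $\Omega_n$. A breakdown at some $c_\ast\in W^\ast$ would require $\rho_W'(c_\ast)=0$, but then (a) at $c_\ast$ would give $|\rho_W(c_\ast)-1|\le (B_0 4^n/n)\log|\rho_W(c_\ast)|$, contradicting $\rho_W(c_\ast)\in\Omega_n$. Since $\Omega_n$ is simply connected, the continuation is single-valued and produces a holomorphic inverse on all of $\Omega_n$.

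For (c), the univalence on $\tilde\Omega_n$ should follow by an argument-principle count. The excluded disk $|L-R_n|<R_n$ with $R_n\sim 2n\log 2$ is designed precisely to avoid those multiplier values $\rho\approx 2^{2n}$ at which two distinct preimages in $W^\ast$ could coalesce, corresponding heuristically to resonances at root multipliers of sibling period-$n$ hyperbolic components. After removing this disk, a Rouch\'e-type count along the image boundary shows each $\rho\in\tilde\Omega_n$ has exactly one preimage in $W^\ast$, and the containment $\psi_W(\tilde\Omega_n)\subset W^\ast$ is verified by tracking continuations against the parameter rays of arguments $\ell_\pm(W)$ that cut out the wake.
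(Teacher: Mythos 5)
First, a point of context: the paper does not prove Theorem~\ref{univ} at all --- it states explicitly ``In~\cite{Lij} we prove the following'' and imports the result, so your proposal has to be judged against the argument of~\cite{Lij} rather than anything in this text. Measured that way, part (a) --- which you correctly identify as the hardest step --- is where the genuine gap lies. Your variational setup is fine: $\dot z_{j+1}=2z_j\dot z_j+1$ does give $\dot z_0=\frac{\rho}{1-\rho}\sum_{k}\prod_{j\le k}(2z_j)^{-1}$, and $\rho'/\rho=\sum_j \dot z_j/z_j$. But the inequality~(\ref{main}) involves $\log|\rho|$ (the \emph{real part} of $\log\rho$, which can be nearly zero while $|\rho-1|$ is of order $1$), not $|\log\rho|$, so its real content is a \emph{lower} bound on $|\rho'|$ valid for every repelling cycle of exact period $n$ with $\rho$ away from $1$ but $|\rho|$ close to $1$. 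You propose to close this with ``equidistribution of cycles with respect to the measure of maximal entropy,'' but equidistribution is a statement about averages over the $\sim 2^n/n$ cycles of period $n$ and cannot produce a bound that must hold for each individual cycle uniformly in $c$; nor do you explain where the additive $\log|\rho|$ term on the right-hand side comes from. Without a concrete estimate for $\sum_k\bigl|\prod_{j\le k}(2z_j)^{-1}\bigr|$ of the right strength, (a) is asserted rather than proved. Your monodromy derivation of (b) from (a) is essentially the correct mechanism, though you should also rule out the lifted path escaping to infinity or reaching a parabolic parameter; both follow because $\Omega_n$ is an open set omitting $\rho=1$ and confined to $\log|\rho|<2/(C-2)$ with $C=n^{-1}4^nB_0$, so the multiplier stays bounded and the orbit continues analytically.

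Your reading of part (c) misidentifies what the excluded disk does. Since $\Omega_n^{\log}$ satisfies $x<2/(n^{-1}4^nB_0-2)$, every $\rho\in\Omega_n$ has $|\rho|$ exponentially close to $1$; multipliers $\rho\approx 2^{2n}$ are nowhere near $\Omega_n$, so there is no question of ``resonances at root multipliers of sibling components'' at that scale. The set actually removed from $\Omega_n^{\log}$ is the intersection with $\{|L-R_n|<R_n\}$, a thin parabolic sliver tangent to the imaginary axis at $L=0$ (roughly $x>|y|^2/(2R_n)$). Its role is exactly the ``Finally'' clause you relegate to an afterthought: by the Pommerenke--Levin--Yoccoz-type inequality for disconnected Julia sets (the paper's Theorem~\ref{yoccircledis} and Corollary~\ref{zero}, with $R_{n,|\rho|}\to 2n\log 2$ in the relevant range of $|\rho|$, which is where $R_n=(2+O(2^{-n}))n\log 2$ comes from), if $\psi(L)$ were to leave the wake $W^*$ then $\log\rho$ would be forced into $B(R_n,R_n)$. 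Excluding that disk therefore forces $\psi(\tilde\Omega_n)\subset W^*$, and univalence then follows because $\rho_W$ is a single-valued holomorphic left inverse of $\psi$ on the wake --- no Rouch\'e count along an image boundary (whose geometry you do not control) is available or needed. As written, your (c) both gives the wrong reason for the shape of $\tilde\Omega_n$ and leaves the containment in $W^*$, which is the actual engine of the proof, unproved.
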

In the present paper we find a bigger extension for the function $\psi_W$
in the case of non-primitive $W$:
\begin{theo}\label{univ1}
There exists $\tilde K>0$ as follows. 
Let $W$ be a non-primitive $n$-hyperbolic component, i.e. $W=Z(t_0)$,
for some $n_0$-hyperbolic component $Z$ and some $t_0=p_0/q_0\not=0$,
and $n=n_0 q_0$.
Then $\psi_W$ extends to a univalent function in a domain 
$\Omega_{n_0,t_0}$ which consists of $\tilde\Omega_n$ and a neighborhood
of the point $1\in \partial \Omega_n$, and is defined by
its log-projection as follows:
$$\Omega_{n_0,t_0}^{\log}:=\{L=x+iy: \exp(L)\in \Omega_{n_0,t_0}, |y|\le \pi\}
=\tilde\Omega_n^{\log}\cup B(0, d),$$ 
where  $d=\tilde K \min\{ n |p_0| 4^{-n_0}, n^{-1}\}$.
If $q_0$ is large enough,
$\Omega_{n_0, t_0}^{\log}$ coincides with $\Omega_n^{\log}\cup B(0, d)$.
\end{theo}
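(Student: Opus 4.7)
The plan is to extend $\psi_W$ across the boundary point $\rho_W = 1$ (the root $c_W$) by passing through the parent component $Z$, for which Theorem~\ref{univ} already provides a large extension. Since $W = Z(t_0)$ is satellite, the root $c_W = c(Z,t_0)$ lies in the interior of the wake $Z^*$, where the period-$n_0$ multiplier $\rho_Z$ extends analytically (by~\cite{H},~\cite{Leyo}) with $\rho_Z(c_W) = e^{2\pi i t_0}$; and by Theorem~\ref{univ}(b)--(c) applied to $Z$, the inverse $\psi_Z$ is holomorphic on $\Omega_{n_0}$ and univalent on $\tilde\Omega_{n_0}$, both containing a definite neighborhood of $e^{2\pi i t_0}$.

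I would then introduce the transition map
\[
\Phi(\rho) \;:=\; \rho_W\bigl(\psi_Z(\rho)\bigr),
\]
holomorphic near $e^{2\pi i t_0}$ with $\Phi(e^{2\pi i t_0}) = 1$, and define the extension by $\psi_W := \psi_Z \circ \Phi^{-1}$ wherever $\Phi$ admits a univalent inverse branch. Compatibility with the original $\psi_W$ on $W$ is automatic from uniqueness of analytic continuation. The computation of $\Phi'(e^{2\pi i t_0})$ uses the standard satellite normal form: near $c_W$ one has $f^{n_0}_c(z) = \rho_Z(c)\, z + a(c)\, z^{q_0+1} + \cdots$, and a direct calculation of its satellite $q_0$-cycle produces the period-$n$ multiplier $\rho_W(c) \approx (q_0+1) - q_0\,\rho_Z(c)^{q_0}$ to leading order, so that $|\Phi'(e^{2\pi i t_0})| \sim q_0^2$. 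Combining this with a lower bound on the univalence radius of $\Phi$ around $e^{2\pi i t_0}$---which from the defining inequality~(\ref{domeq}) of $\Omega_{n_0}$ with constant $C = 4^{n_0} B_0/n_0$ is of order $|e^{2\pi i t_0}-1|/C \sim |p_0|\, n_0/(q_0\, 4^{n_0})$---via Koebe's $1/4$-theorem yields a univalent inverse $\Phi^{-1}$ on a disk around $1$ of radius $\sim q_0^2 \cdot |p_0|\, n_0/(q_0\, 4^{n_0}) = n|p_0|\, 4^{-n_0}$. Transcribing to $L = \log\rho_W$ (which is an isometry to first order near $L=0$) and taking the minimum with the competing radius $\sim n^{-1}$ inherited from Theorem~\ref{univ}(c) for $W$ itself gives the stated $d$; the large-$q_0$ simplification records that $B(0,d)$ then swallows the sliver of $\Omega_n^{\log}\setminus\tilde\Omega_n^{\log}$ located near $L=0$.

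The main obstacle will be the quantitative satellite-bifurcation analysis: controlling the higher-order error in $\rho_W \approx (q_0+1) - q_0\,\rho_Z^{q_0}$ throughout a disk of the claimed size around $e^{2\pi i t_0}$, uniformly in $n_0$ and $t_0$, which requires bounds on the normal-form coefficient $a(c_W)$ tracking the growth of the degree $2^{n_0}$ of $f^{n_0}$. A secondary issue is verifying that the new branch $\psi_Z\circ\Phi^{-1}$ glues univalently to the previously-constructed extension on $\tilde\Omega_n$; this one handles via an argument-principle computation on a common subdomain, using that both branches land in the wake $W^*$ where $\rho_W$ is an honest single-valued analytic function.
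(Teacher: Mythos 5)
Your overall skeleton is the paper's: pass through the parent component $Z$, study the transition map $\Phi=\rho_W\circ\psi_Z$ (the paper's $\Psi_{Z,t_0}$, up to composing with $\exp$), use $|\Phi'|\sim q_0^2$ at the bifurcation point together with a univalence radius for $\Phi$, and then glue the new branch to the extension of $\psi_W$ on $\tilde\Omega_n$ coming from Theorem~\ref{univ}(c). But there are two genuine gaps, and they are exactly where the paper has to work hardest. First, you never justify that $\rho_W$ (the multiplier of the period-$n=n_0q_0$ orbit) is a single-valued holomorphic function on $V=\psi_Z(\mbox{disk})$ at the claimed scale. A priori $\rho_W$ only continues analytically along curves avoiding parameters where a period-$n$ orbit has multiplier $1$ (roots of primitive $n$-components), and one must also rule out monodromy and keep $V$ inside the wake $Z^*$. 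This is Part I of Lemma~\ref{q} together with Lemmas~\ref{r} and~\ref{rb}: the radius must additionally be cut down to $r(n_0,p_0/q_0)=\min\{1/(2n_0q_0^3),\,|t_0|2^{-n_0/2}\}$ so that $V$ avoids the Yoccoz circles of the other limbs $L(Z,p'/q')$ with $q'\le q_0+1$ (which, via Proposition~\ref{combin}, is what excludes the suspicious parameters) and stays inside $Z^*$ (via the disconnected-Julia-set multiplier bounds of the Appendix). Your formula for $d$ happens to come out right only because $q_0^2\cdot 1/(2n_0q_0^3)\sim n^{-1}$; the $n^{-1}$ term is \emph{not} ``inherited from Theorem~\ref{univ}(c) for $W$ itself'' but is forced by this limb-avoidance constraint on the $Z$-side.

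Second, the univalence of $\Phi$ on a disk of definite radius is not something you can get from Koebe plus the value of $\Phi'$ at one point: Koebe distortion presupposes univalence, and a holomorphic map with nonvanishing derivative at the center of a disk need not be injective on any fixed proportion of it without an a priori bound on the function. You propose to get this from the satellite normal form $\rho_W\approx(q_0+1)-q_0\rho_Z^{q_0}$ with controlled error, and you correctly flag the uniform error control as ``the main obstacle'' --- but that is the proof, and it is left unresolved. The paper takes a different and non-obvious route in Lemma~\ref{q}(II): normalize $g(w)=(\Phi(rw+\rho_0)-1)/(rq_0^2)$, observe that $g$ omits the value $0$ away from the origin and takes a suitable value at most once (by the injectivity statement of Part I), bound $|g(-1/2)|$ using the Petersen--Levin multiplier inequality~(\ref{inside}) inside $W^0=Z$, conclude a uniform bound on $g$ from Montel's quasinormality criterion (Proposition~\ref{mon}), and only then extract univalence from Nehari's criterion (Proposition~\ref{unival}). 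Without either this argument or a genuinely uniform normal-form estimate, the claimed univalent extension of $\psi_W$ to $B(1,\sim q_0^2 r)$, and hence the whole theorem, is not established.
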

Along with Theorem~\ref{univ}, the proof of Theorem~\ref{univ1} 
is based on geometric relations between the multiplier maps
$\rho_Z$ and $\rho_W$ of the hyperbolic components $Z$ and $W=Z(t_0)$
respectively near the common point $c_0=c(Z, t_0)=c(W, 0)$ of their 
boundaries. We start with the 
following known relation~\cite{Gu}: 
$|(d\rho_W/d\rho_Z)(c_0)|=q_0^2$.
Then we show in Lemmas~\ref{q} and~\ref{r}-\ref{rb} that
$\rho_W$ {\it as a function of $\rho_Z$} extends to a {\it univalent} 
function in a disk centered at the point $\rho_Z(c_0)$ and
of radius proportional to
$r(n_0, p_0/q_0)=\min\{1/(n_0 q_0^3), 2^{-n_0/2}|p_0/q_0|\}$ 
as far as the function $\psi_Z=\rho_Z^{-1}$
allows to do such an extension, i.e. univalent. The latter
holds in the domain $\tilde\Omega_{n_0}$.
It follows that $d$ is roughly 
$q_0^2 \min\{r(n_0, p_0/q_0), dist(2\pi i p_0/q_0,
\partial \tilde\Omega^{\log}_{n_0})\}$.
See Sect.~\ref{univ1pr} for the complete proof.
Note that the proof of Lemma~\ref{q} uses some classical
tools as well as bounds for multipliers from~\cite{pet1}
and~\cite{levcoll}, see Sect.~\ref{bif} and Appendix. 
This Lemma provides also a step
in proving Theorem~\ref{intronlc}.
\subsection{Limbs}\label{limbs}
Let $W$ be an $n$-hyperbolic component.
For every $t=p/q\not=0$, consider the hyperbolic component
$W(t)$, the corresponding
wake $W(t)^*$ and the limb $L(W,t)$.
Then a branch $\log\rho_W$
is well-defined in the open set $W(t)^*$, such that 
$\log\rho_W(c)\to 2\pi i t$ 
as $c\to c(W,t)$, and, for every $c\in L(W,t)$,
the point $\log\rho_W(c)$ is 
contained in the following round disk (Yoccoz's circle):
\begin{equation}\label{yoccircle}
Y_n(t)=\{L: |L-(2\pi it+\frac{n\log 2}{q})|<\frac{n\log 2}{q}\},
\end{equation}
see~\cite{H},~\cite{Pe} and references therein. 
See also Theorem~\ref{yoccircledis} of Appendix.
\begin{com}\label{lcw}
An important corollary of the Yoccoz bound~(\ref{yoccircle}) is that
every point in $M\cap \overline{W^*}$ either belongs to the closure
of the hyperbolic component $W$, or belongs to some its limb $L(W, p/q)$
(see~\cite{H},~\cite{Mi2}).
\end{com}
Theorem~\ref{intromlc} will be a consequence of the following
two bounds on the size of limbs. The first one is proved in~\cite{Lij} 
and the proof is based on Theorem~\ref{univ}(a)-(b) and~(\ref{yoccircle}).
\begin{theo}\label{limb}
There exists $A>0$, such that, for 
every $n$-hyperbolic component $W$ and every $t=p/q\in (-1/2, 1/2]$,
the diameter of the limb $L(W, t)$ is bounded by:
\begin{equation}\label{allbound}
diam L(W, t)\le A \frac{4^n}{|p|}.
\end{equation}
\end{theo}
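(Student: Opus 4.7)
The idea is to trap $L(W,t)$ inside the image of the Yoccoz disk $Y_n(t)$ under the map $\Psi(L):=\psi_W(e^L)$, and then control the diameter of this image by combining the univalent extension of $\psi_W$ from Theorem~\ref{univ}(b)--(c) with the pointwise derivative bound~(\ref{main}) evaluated at the root $c(W,t)$ of the limb. By the Yoccoz circle trap~(\ref{yoccircle}), for every $c\in L(W,t)$ one has $\log\rho_W(c)\in Y_n(t)$, so $L(W,t)\subset\Psi(Y_n(t))$, reducing the theorem to an estimate of $\diam \Psi(Y_n(t))$.

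At the tangency point $L_0=2\pi it\in\partial Y_n(t)$ the corresponding parameter is $c_0=c(W,t)$ with $\rho_W(c_0)=e^{2\pi it}$, so $\log|\rho_W(c_0)|=0$ and $|\rho_W(c_0)-1|=2|\sin\pi t|$. Feeding these into~(\ref{main}) gives, for all $n$ past an absolute threshold (finitely many small $n$ being absorbed into the final constant $A$),
\[
|\rho_W'(c_0)|\;\ge\;\frac{c_1\,n\,|\sin\pi t|}{4^n},\qquad\text{equivalently}\qquad |\Psi'(L_0)|=\frac{1}{|\rho_W'(c_0)|}\;\le\;\frac{c_2\,4^n}{n\,|\sin\pi t|}.
\]
To pass from this one-point bound to an estimate of $\diam \Psi(Y_n(t))$, I invoke the Koebe distortion theorem for the univalent map $\Psi$ on a conformal enlargement of $Y_n(t)$ inside $\tilde\Omega_n^{\log}$ (Theorem~\ref{univ}(c)), obtaining $|\Psi'|\le c_3|\Psi'(L_0)|$ uniformly on $Y_n(t)$. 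Hence
\[
\diam \Psi(Y_n(t))\;\le\;c_3\,|\Psi'(L_0)|\cdot\diam Y_n(t)\;\le\;\frac{c_4\,4^n}{n|\sin\pi t|}\cdot\frac{2n\log 2}{q}\;=\;\frac{c_5\,4^n}{q\,|\sin\pi p/q|}.
\]
The elementary inequality $|\sin\pi p/q|\ge 2|p|/q$, valid for $|p/q|\le 1/2$, converts the right-hand side into $A\cdot 4^n/|p|$, completing the argument.

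The principal technical obstacle is the Koebe step: one must exhibit a definite conformal neighbourhood of $Y_n(t)$ on which $\Psi$ is univalent (or at least on which $|\Psi'|$ stays comparable to $|\Psi'(L_0)|$). The removed set $\{|L-R_n|<R_n\}$ from Theorem~\ref{univ}(c) and the Yoccoz disk $Y_n(t)$ are both tangent to the imaginary axis from the right, at $0$ and at $2\pi it$ respectively, and so can approach each other when $|t|$ is small. Resolving this needs a quantitative geometric separation check --- carried out inside the strip $|\Im L|\le\pi$ where the shape of $\Omega_n^{\log}$ is under control --- that a fixed-ratio dilation of $Y_n(t)$ still fits in $\tilde\Omega_n^{\log}$; alternatively, one may integrate~(\ref{main}) along radial segments inside the wake to absorb the portion of $Y_n(t)$ that is geometrically close to the obstruction. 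It is this analysis, together with the elementary sine-convexity step, that is responsible for the factor $|p|$ rather than $q$ in the final bound.
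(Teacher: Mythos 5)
Your skeleton is the right one --- the Yoccoz trap $L(W,t)\subset\psi_W^{\log}(Y_n(t))$ and the multiplier inequality~(\ref{main}) are exactly the two ingredients the paper attributes to the proof of Theorem~\ref{limb} in~\cite{Lij} (the paper itself only cites that proof, stating it rests on Theorem~\ref{univ}(a)--(b) and~(\ref{yoccircle})). But the step you yourself single out as the principal obstacle is a genuine gap, and your primary suggestion for closing it provably fails. You want Koebe distortion for $\Psi=\psi_W\circ\exp$ on a fixed-ratio dilation of $Y_n(t)$ inside $\tilde\Omega_n^{\log}$. A direct computation shows that $Y_n(t)$ itself is disjoint from the deleted disk $\{L:|L-R_n|<R_n\}$, $R_n\approx 2n\log 2$, only when roughly $p^2\gtrsim n^2q$; since $q$ may be arbitrarily large compared with $|p|$, this fails even after discarding the trivial range $|p|\lesssim 4^n$ (take, say, $|p|\asymp 4^n$ and $q\asymp n^3\,16^n$, where the asserted bound $A\,4^n/|p|$ is still of constant order). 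In that regime there is no conformal annulus around $Y_n(t)$ in $\tilde\Omega_n^{\log}$ on which $\Psi$ is known to be univalent, so the one-point derivative bound at $L_0=2\pi i t$ cannot be propagated by distortion. Two smaller points: at $c(W,t)$ the period-$n$ orbit is neutral, not repelling, so~(\ref{main}) applies there only via a limit from inside the wake; and ``absorbing finitely many small $n$ into $A$'' disposes of nothing, since for each fixed $n$ there remain infinitely many $p/q$ with $4^n/|p|$ small.

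The repair is your parenthetical alternative, promoted to the whole argument: drop Koebe entirely. The inequality~(\ref{main}) is pointwise and applies at \emph{every} parameter $c$ whose period-$n$ orbit is repelling with $|\rho(c)|<e$. For $L\in Y_n(t)$ and $c=\psi_W^{\log}(L)$ one has $\log|\rho(c)|\le 2n\log 2/q$ and $|\rho(c)-1|\ge 2|\sin\pi t|-O(n/q)\gtrsim |p|/q$, so~(\ref{main}) yields $|(\psi_W^{\log})'(L)|=|\rho|/|\rho'(c)|\lesssim q\,4^n/(n|p|)$ at every point of $Y_n(t)$, not merely at the tangency point. Integrating along chords of $Y_n(t)$, whose diameter is $2n\log 2/q$, gives $\diam\psi_W^{\log}(Y_n(t))\lesssim 4^n/|p|$ directly. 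This uses only the holomorphic extension of $\psi_W$ to $\Omega_n$ from Theorem~\ref{univ}(b) --- one checks $Y_n(t)\subset\Omega_n^{\log}$ once $|p|\ge K\,4^n$, the complementary case being trivial --- and no univalence at all, which is precisely why the paper invokes parts (a)--(b) rather than (c). As it stands, your proof establishes the theorem only under the extra hypothesis $p^2\gtrsim n^2q$ needed for the separation check, so the central case is missing.
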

The second bound concerns the non-primitive components.
Its proof is based on 
Theorems~\ref{univ}(a),~\ref{univ1},~\ref{limb}, and on~(\ref{yoccircle}), 
and it states the following.
\begin{theo}\label{limb1}
There exists $\tilde A>0$, such that, 
if an $n$-hyperbolic component $W$ is not primitive,
then
\begin{equation}\label{allbound1}
diam L(W, p/q)\le \tilde A \frac{8^n}{q}.
\end{equation}
\end{theo}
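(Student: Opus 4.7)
The plan is to combine Theorem~\ref{limb} (effective when $|p|$ is comparable to $q$) with Theorem~\ref{univ1} (which gives a univalent extension of $\psi_W$ across $\rho=1$, precisely because $W$ is non-primitive), via a case-split on the size of $|p|/q$. If $|p| \geq c\,q/2^n$ for a small constant $c>0$ to be fixed later, then Theorem~\ref{limb} immediately yields
$$\mathrm{diam}\,L(W,p/q) \leq A\,\frac{4^n}{|p|} \leq \frac{A}{c}\cdot \frac{8^n}{q},$$
so the work concentrates on the complementary regime $|p|/q < c\cdot 2^{-n}$, in which $q > c\cdot 2^n$.

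In that regime, write $W = Z(t_0)$ with $t_0 = p_0/q_0\neq 0$, $q_0\geq 2$, $n = n_0 q_0$, whence $4^{n_0}\leq 2^n$. Set $\phi(L) = \psi_W(e^L)$; by Theorem~\ref{univ1}, $\phi$ is univalent on $B(0,d)$ with $d=\tilde K\min\{n|p_0|4^{-n_0},\,n^{-1}\} \gtrsim n/2^n$. The Yoccoz disk $Y_n(p/q)$ of~(\ref{yoccircle}) is tangent to the imaginary axis at $2\pi i p/q$ with radius $n\log 2/q$, so every point has modulus at most $2\pi|p|/q + 2n\log 2/q \leq (2n\log 2 + 2\pi c)/2^n$. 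Choosing $c$ small enough relative to $\tilde K$ ensures $Y_n(p/q)\subset B(0,d/2)$.

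The key quantitative ingredient is the estimate of $|\phi'(0)|=|\psi_W'(1)|=|\rho_W'(c_W)|^{-1}$. From the identity $|(d\rho_W/d\rho_Z)(c_0)|=q_0^2$ cited from~\cite{Gu} I obtain $|\rho_W'(c_W)|=q_0^2\,|\rho_Z'(c_0)|$. Applying Theorem~\ref{univ}(a) to $Z$ at the boundary point $c_0$, where $|\rho_Z(c_0)|=1$ so that the $\log|\rho|$ term vanishes, together with the elementary bound $|e^{2\pi i t_0}-1|=2|\sin(\pi t_0)|\geq 4|p_0|/q_0$, gives $|\rho_Z'(c_0)|\gtrsim n_0|p_0|/(q_0\,4^{n_0})$, and hence
$$|\rho_W'(c_W)| \gtrsim \frac{n|p_0|}{4^{n_0}} \geq \frac{n}{2^n}, \qquad |\phi'(0)|\lesssim \frac{2^n}{n}.$$

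Finally, Koebe's distortion theorem applied to $\phi$ univalent on $B(0,d)$ yields $|\phi'(L)|\lesssim |\phi'(0)|$ on $B(0,d/2)$. Since $Y_n(p/q)\subset B(0,d/2)$ has diameter $2n\log 2/q$, integrating along a chord gives
$$\mathrm{diam}\,L(W,p/q)\leq \mathrm{diam}\,\phi(Y_n(p/q)) \lesssim |\phi'(0)|\cdot \frac{n}{q} \lesssim \frac{2^n}{q} \leq \frac{8^n}{q}.$$
The main obstacle is verifying the inclusion $Y_n(p/q)\subset B(0,d/2)$: it relies crucially on $4^{n_0}\leq 2^n$ (which uses $q_0\geq 2$, forced by $t_0\neq 0$) and on tuning the case-split constant $c$ against the constant $\tilde K$ of Theorem~\ref{univ1}. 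The $(1+o(1))$ factor in Theorem~\ref{univ}(a) is harmless for $n_0$ large and is absorbed into $\tilde A$ for the finitely many small-$n_0$ exceptions.
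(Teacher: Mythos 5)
Your proposal is correct and follows essentially the same route as the paper's proof: a case split on the size of $|p|/q$ against (a lower bound for) the radius $d$ of Theorem~\ref{univ1}, with Theorem~\ref{limb} handling the large-$|p|$ regime, and in the complementary regime the inclusion $Y_n(p/q)\subset B(0,d/2)$, Koebe distortion for $\psi_W^{\log}$, and the estimate $|(\psi_W^{\log})'(0)|\lesssim 4^{n_0}/n$ obtained from~(\ref{gu}) together with Theorem~\ref{univ}(a) applied to $Z$ at $c_0$. The only cosmetic difference is that you fold the paper's preliminary case $q\le 8^n$ and its two sub-cases of the large-$|p|$ regime into a single threshold $|p|/q\ge c\,2^{-n}$, which is harmless.
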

See Sect.~\ref{limb1pr} for the proof.

\section{Rigidity}\label{rigidity}
\subsection{Simple renormalization}\label{simple}
We follow some terminology as in~\cite{mcm}, see also~\cite{Mi2}.
For the theory of polynomial-like maps, see~\cite{DH3}.
Let $f$ be a quadratic polynomial with connected Julia set.
The map $f^n$ is called {\it renormalizable} if there are open
disks $U$ and $V$ such that $f^n: U\to V$ is a polynomial-like
map with a single critical point at $0$ and with connected
Julia set $J_n$. The map $f^n: J_n\to J_n$ has two fixed points
counted with multiplicity:
$\beta$ (non-separating) and $\alpha$.
Denote them by $\beta_n$ and $\alpha_n$.
The renormalization is {\it simple}
if any two small Julia sets $f^i(J_n)$, $i=0,1,...,n-1$
cannot cross each other,
i.e. they can meet only at some iterate of $\beta_n$.
If these small Julia sets don't meet, the renormalization
$f^{m_n}$ is called of {\it disjoint type}, or {\it primitive}.
Otherwise it is called of {\it $\beta$-type}, {\it non-primitive},
or {\it satellite type}.

Every repelling periodic point $z$ of $f$ of period $n$ has
a well-defined rational {\it rotation number} $p/q\in (-1/2, 1/2]$,
which is defined by the order at which 
$f^n$ permutes (locally) $q$ external rays landing at $z$ in the
couterclockwise direction.
If the fixed point $\alpha_n$ of $f^n$ is repelling, then it has
a non-zero rational rotation number $p/q$,
which can be defined equivalently as follows:
$f^n: U\to V$ is hybrid equivalent to a quadratic  polynomial
which lies in the $p/q$-limb of the main cardioid.

\subsection{Demonstration of Theorem~\ref{intromlc}}
We split the proof into few steps.

{\it A.} 
Let $f^n: U\to V$ be a simple renormalization of $f$,
$\beta_n$ its $\beta$-fixed point, and
$O_n=\{f^i(\beta_n)\}_{i=0}^{n-1}$ the periodic orbit contaning $\beta_n$.
We use some notions and results from~\cite{DH2},~\cite{Mi2}.
The characteristic arc $I(O_n)=(\tau_-(O_n), \tau_+(O_n))$ of $O_n$
is the shortest arc (measured in $S^1$)
between the external arguments of the
rays landing at the points of $O_n$. Then 
$\tau_\pm(O_n)$ are the arguments of 
two dynamical rays that land at the point 
$\beta_n'=f(\beta_n)$ of $O_n$, and $c_0=f(0)$ lies in the sector
bounded by these rays and disjoint with $0$.
Furthermore, the two {\it parameter rays} of the same arguments 
$\tau_\pm(O_n)$ land
at a single parameter $c(O_n)$. The point $c(O_n)$ is the root 
of a hyperbolic component denoted by $W_n$, and
the above parameter rays  completed by  $c(O_n)$ bound
the wake $W_n^*$ of this component. Denote by $L(O_n)=W_n^*\cap M$
the corresponding limb.
Thus,

{\it (A1)} $c_0\in L(O_n)$,

{\it (A2)} moreover, the rotation
number of the $\alpha$-fixed point $\alpha_n$ of the renormalization
is $p/q$ if and only if $c_0\in L(W_n, p/q)$ - the limb of $W_n$
which is attached at the point of $\partial W_n$
with the internal argument $p/q$.

{\it B.} Let $m<m'$. By~\cite{mcm}, since all renormalizations $f^{n_m}$
are simple, $n_m$ divides $n_{m'}$.
Furthermore, by (A2),

{\it (B1).}
$$\tau_{-}(O_{n_m})<\ell_-(W_{n_m}(p_m/q_m))\le\tau_-(O_{n_{m'}})<\tau_+(O_{n_{m'}})\le
\tau_+(W_{n_m}(p_m/q_m))<\tau_{+}(O_{n_m}),$$ 
thus the sets
$\overline L(O_{n_m})$ form a decreasing sequence of compact sets
containing $c_0$. 
Therefore, for the local connectivity of $M$ at $c_0$ 
it is enough to prove that the set
$$S=\cap_{m}\overline L(O_{n_m})$$
consists of a single point.

{\it (B2).} It is easy to see that $\tau_+(O_{n_m})-\tau_-(O_{n_m})\to 0$.
Indeed, by (B1) and~(\ref{width}), 
$\tau_+(O_{n_m})-\tau_-(O_{n_m})\le(2^{n_m}-1)^2/(2^{n_m q_m}-1)$, and $q_m>2$.
Thus, there exists
\begin{equation}\label{exlim}
\lim \tau_\pm(O_{n_m})=\tau_0.
\end{equation}
Note that $\tau_0$ is not periodic under the doubling
map 
$\sigma(t)=2t(mod \ 1)$.

{\it C.} Let $c$ be any point from $S$.

{\it (C1)} All periodic points of $f_c$ are 
repelling. Indeed, obviously, $f_c$ cannot have an attracting
cycle. If $f_c$ has an irrational neutral
periodic orbit then $c$ lies in the boundary of a hyperbolic
component contained in $S$, a contradiction with ~(\ref{exlim}).
If $f_c$ has a 
neutral parabolic periodic orbit, then $c$ is the landing
point of precisely two parameter rays with periodic arguments,
if $c\not=1/4$, and 
the only ray landing at it, of zero argument, if $c=1/4$,
again the same contradiction.
Thus, all cycles are repelling.
Consider the so-called real lamination $\lambda(c)$
of $f_c$~\cite{Ki0}. It is a minimal closed equivalence relation
on $S^1$ that identifies two points whenever their prime end impressions
intersect.
For every $m$, $\{\tau_{-}(O_{n_m}), \tau_{+}(O_{n_m})\}\subset \lambda(c)$.
Since $\lambda(c)$ is closed, $\tau_0\in \lambda(c)$. Moreover, for every
$m$, there is a pair of angles $\{\tau^-_m, \tau^+_m\}$, such that
their images under the doubling map
$\{\sigma(\tau^\pm_m)\}\subset \{\tau_\pm(O_{n_m})\}$,
and $\{\tau^-_m, \tau^+_m\}$ is contained
in the class of $\lambda(c)$ corresponding to the point $\beta_{n_m}$.
Passing to the limit, we get that
the pair $\{\tau_0/2, \tau_0/2+1/2\}$ is the critical class
of $\lambda(c)$.
The following statements are known after Thurston
and Douady and Hubbard and proved in a much more
general form in~\cite{Ki},
Proposition 4.10: if the critical class 
$\{\tau_0/2, \tau_0/2+1/2\}$ is contained
in a class of $\lambda(c)$, then $\lambda(c)$ is determined
by (the itinerary of) $\tau_0$. Since $\tau_0$ is the same for all $c\in S$,
we conclude that the real laminations of all $f_c$, $c\in S$,
coincide. In particular, for every $c\in S$,
$f_c^{n_m}$ is simply renormalizable because by~\cite{mcm}
the simple renormalizations are detected by the lamination.  
The renormalization $f_c^{n_m}$ is hybrid equivalent to
some $f_{T(c,m)}$~\cite{DH3}. Denote $\hat n_k=n_{m+k}/n_m$, $k>0$. Then
$f_{T(c,m)}^{\hat n_k}$ is simply renormalizable, and the rotation number
of its $\alpha$-fixed point is $p_{m+k}/q_{m+k}$ because it is
a topological invariant.

{\it (C2)} By (A2), $T(c,m)$ is contained
in the intersection of a decreasing sequence of limbs $L_{m,k}$,
$k=1,2,...$, such that $L_{m,k}$ is
attached to a hyperbolic component
of period $\hat n_k$ at the internal argument $p_{m+k}/q_{m+k}$.
By Theorem~\ref{limb}, the diameter of $L_{m, k}$,
$$diam L_{m,k}\le A \frac{4^{\hat n_k}}{|p_{m+k}|}= 
A \frac{4^{\frac{n_{m+k}}{n_m}}}{|p_{m+k}|}.$$
Since $\limsup  (\log |p_m|)/n_m>0$, one can find and fix
$m$ in such a way, that 
$$\liminf_{k\to \infty}\frac{4^{\frac{n_{m+k}}{n_m}}}{|p_{m+k}|}=0.$$
It means, that, for the chosen $m$, the limbs
$L_{m,k}$ ($k\to \infty$) shrink to a point $\hat c=T(c,m)$, 
so that $\hat c$ depends on $m$ but is independent of $c\in S$.
Thus 
$f_c^{n_m}$ is quasi-conformally conjugate
to $f_{\hat c}$, for all $c\in S$.

{\it D.} 
Assume that the compact $S$ has at least two different points.
Then $S$ contains a point $c_1\in \partial M$ different from $c_0$.
By (C1)-(C2), the renormalizations $f_{c_0}^{n_m}$
of $f_{c_0}$ and $f_{c_1}^{n_m}$ of $f_{c_1}$
are quasi-conformally conjugate,
all periodic points of $f_{c_0}$, $f_{c_1}$ are repelling,
and $\lambda(c_0)=\lambda(c_1)$.
We are in a position to apply Sullivan's pullback argument (see~\cite{ms}).
Using a quasi-conformal conjugacy near small Julia sets (rather than
on the postcritical set) and an appropriate puzzle structure, 
we arrive at a quasi-conformal conjugacy
between $f_{c_0}$, $f_{c_1}$. Since $c_1\in \partial M$,
then according to~\cite{DH3}, $c_0=c_1$.   

This finishes the proof of Theorem~\ref{intromlc} under
the condition~(\ref{m}).

{\it E.} Now assume that every renormalization
$f^{m_n}$ is also non-primitive.
Then the hyperbolic components $W_{n_m}$ are non-primitive 
as well~\cite{Mi2},
and we can apply the bound of Theorem~\ref{limb1} instead
of Theorem~\ref{limb}. It gives the same conclusions
under the weaker condition~(\ref{m1}). The proof stands the same
with some obvious changes in (C2).

\section{Non locally connected Julia sets}\label{js}
\subsection{Statement}
Let $t_0$, $t_1$,...,$t_m$,... be a sequence
of non-zero rational numbers $t_m=p_{m}/q_{m}\in (-1/2, 1/2]$.
Let us introduce the following conditions.

$(Y0)_a$ 
\begin{equation}\label{Yrel0a}
\sup_{m\ge 1} |t_m|q_0...q_{m-1}<\infty.
\end{equation}

$(Y0)_b$
\begin{equation}\label{Yrel0b}
\inf_{m\ge 2} \frac{\log\frac{|p_{m-1}q_{m-1}|}{|t_m|}}{q_0...q_{m-2}}>0.
\end{equation}

$(Y1)$ There exists $\beta>0$, such that 
\begin{equation}\label{Yrel01}
\limsup_{m\to \infty} \frac{q_m}{\max\{(q_0...q_{m-1})^2, \
 \exp(\beta q_0...q_{m-2})\}}>0.
\end{equation}

$(S)$ for some $k\ge 0$ and $\gamma>0$,
\begin{equation}\label{Srel0}
\sum_{m=k}^\infty  \frac{u_{k,m}}{q_m(1-u_{k,m})}H(u_{k,m+1})<\infty, 
\end{equation}
where 
\begin{equation}\label{Rintro0}
u_{k,m}=|t_{m+1} 
\max\{q_k...q_m, \ \frac{\exp(\gamma q_k...q_{m-1})}{|p_mq_m|}\}|^{1/q_m}, 
m\ge k
\end{equation}
(we set $q_k...q_{m-1}=1$, if $m=k$),
and
\begin{equation}\label{h}
H(u)=16u \Pi_{k=1}^\infty \frac{(1+u^{2k})^8}{(1-u^{2k-1})^8}.
\end{equation}
In particular, $H: [0,1)\to [0, \infty)$ is strictly increasing 
from zero to infinity,
and extends to a holomorphic function in the unit disk.
For more information about $H$, see Subsection~\ref{H}.

See also a remark on the condition (S) in the beginning of 
Subsection~\ref{lcpr}.

\

Let now $W$ be a hyperbolic component of some period $n\ge 1$, 
and the sequence $\{W^m\}$ of hyperbolic component is built as 
in the introduction,
in other words, $W^0=W$, and the closure of the 
hyperbolic component $W^{m+1}$ touches the closure of the hyperbolic
component $W^{m}$ at the point $c_{m}\in \partial W^{m}$
with internal argument $t_{m}$. 
\begin{theo}\label{intronlc}
The following statements hold.

1. If the conditions $(Y0)_a$-$(Y0)_b$ are satisfied, then 
the sequence of parameters $c_m$, m=0,1,2,...,
converges to a limit parameter $c_\ast$.

1'. If, additionally, the condition $(Y1)$ is satisfied, then 
the Mandelbrot set is locally connected at the limit parameter $c_\ast$.

2.  If the conditions $(Y0)_a$-$(Y0)_b$ and $(S)$ are satisfied, then 
the map $f_{c_\ast}$ is infinitely renormalizable
with non locally connected Julia set.
\end{theo}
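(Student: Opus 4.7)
The plan has three tiers, mirroring the three conclusions of the theorem, and rests on the extended multiplier Theorem~\ref{univ1} together with the limb diameter bound Theorem~\ref{limb1} to control the geometry of the satellite cascade.

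\emph{Part 1 (convergence).} Since $W^{m+1}=W^m(t_m)$ is non-primitive, Theorem~\ref{univ1} gives a univalent extension of $\psi_{W^{m+1}}$ to a log-domain containing the disk $B(0,d_{m+1})$, where
\[
d_{m+1}=\tilde K\,\min\{n_{m+1}|p_m|4^{-n_m},\ n_{m+1}^{-1}\}.
\]
The root $c_m$ of $W^{m+1}$ is $\psi_{W^{m+1}}(1)$ and $c_{m+1}$ is $\psi_{W^{m+1}}(e^{2\pi it_{m+1}})$. Koebe distortion on this univalent extension then yields
\[
|c_{m+1}-c_m|\le C\,|t_{m+1}|\cdot \frac{\diam L(W^m,t_m)}{d_{m+1}}
\]
with $\diam L(W^m,t_m)\le \tilde A\,8^{n_m}/q_m$ from Theorem~\ref{limb1}. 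Substituting the decay of $|t_{m+1}|$ from $(Y0)_a$ (namely $|t_{m+1}|=O(1/(q_0\cdots q_m))$) and the growth of $q_m$ from $(Y0)_b$, the resulting estimate makes $\sum_m|c_{m+1}-c_m|<\infty$, so $(c_m)$ is Cauchy and converges to some $c_*\in\partial M$.

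\emph{Part 1' (MLC at $c_*$).} By Comment~\ref{lcw} the sets $\overline{L(W^m,t_m)}$ are nested connected closed neighborhoods of $c_*$ in $M$, so MLC at $c_*$ reduces to $\diam L(W^m,t_m)\to 0$. Under the extra hypothesis $(Y1)$, I split into cases on which term realizes the $\max$: either $q_m$ is polynomially larger than $q_0\cdots q_{m-1}$ (enough to drive Theorem~\ref{univ1}'s refined Koebe estimate near $\rho=1$ to zero), or $q_m\ge c\,\exp(\beta q_0\cdots q_{m-2})$, in which case Theorem~\ref{limb1}'s bound $8^{n_m}/q_m$ tends to zero directly once $(Y0)_a$ is used to keep $q_{m-1}$ polynomial in $q_0\cdots q_{m-2}$.

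\emph{Part 2 (renormalization and non-LC).} Because $c_*\in\overline{L(W^m,t_m)}$ lies in the wake of the non-primitive hyperbolic component $W^{m+1}$, polynomial-like theory (cf.~\cite{DH3},~\cite{Mi2}) gives that $f_{c_*}^{n_m}$ is simply renormalizable of satellite type with $\alpha$-fixed-point rotation number $t_m$; hence $f_{c_*}$ is infinitely renormalizable with the prescribed combinatorics. For the non-local-connectivity of $J_{c_*}$ I would follow the Douady--Hubbard cascade strategy: it suffices to show the bifurcated period-$n_m$ orbit $O_m$ of $f_{c_*}$ stays uniformly bounded away from the critical point $0$. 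At each step $c_m\to W^{m+1}$ the parabolic orbit of $f_{c_m}$ splits, and the proportional loss of distance from $0$ is governed by a conformal factor built from the function $H$ of~(\ref{h}); by the result attributed to~\cite{Ne}, $H$ is precisely the conformal invariant (essentially the modulus of the separating annulus) controlling this local bifurcation. Condition $(S)$ is exactly the summability needed so that the infinite product of these factors leaves a positive lower bound on $\dist(0,O_m)$, which forces the small Julia sets of the renormalizations not to shrink to $0$, giving non-local-connectivity of $J_{c_*}$. The main obstacle is Part 2: identifying $H$ as the explicit conformal factor and making rigorous the reduction of the geometric ``orbits stay away from $0$'' statement to a summability condition — the weight $u_{k,m}/(q_m(1-u_{k,m}))$ in $(S)$ encodes the linearization loss accumulated over the $q_m$ internal steps of each satellite renormalization, and turning this abstract series into a uniform distortion estimate along the cascade is where the real technical difficulty lies.
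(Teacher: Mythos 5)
Your Part~2 outline points in the right direction (the paper does prove non--local connectivity by showing the bifurcated orbits $O_m$ of $f_{c_*}$ stay away from $0$, with $H$ entering through a value-omitting holomorphic function and $(S)$ as the summability condition), but Parts~1 and~1$'$ rest on an estimate that fails, and the key construction in Part~2 is not supplied.

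The concrete failure is the use of Theorem~\ref{limb1}. Its bound $\diam L(W^m,t_m)\le \tilde A\,8^{n_m}/q_m$ with $n_m=q_0\cdots q_{m-1}$ does not tend to zero under $(Y0)_{a}$--$(Y0)_b$, nor even with $(Y1)$ added: $(Y1)$ only forces $q_m\gtrsim \exp(\beta q_0\cdots q_{m-2})$ along a subsequence, whereas $8^{n_m}=\exp((\log 8)\,q_{m-1}\,q_0\cdots q_{m-2})$, so $8^{n_m}/q_m\to\infty$. The same defect sinks your Part~1 estimate, where $\diam L(W^m,t_m)$ appears in the numerator and the tiny quantity $d_{m+1}$ in the denominator; the resulting bound on $|c_{m+1}-c_m|$ is nowhere near summable. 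The paper avoids single-step limb bounds entirely: in Lemma~\ref{notlc} (Claim~1) it composes the inverse log-multiplier maps, $\psi_{m+1}=\psi_m\circ\lambda_m^{-1}$, where Lemma~\ref{q^{-3}} gives $|(\lambda_m^{-1})'|<2/q_m^2$ on a definite disk $B_{m+1}$ of radius $\alpha q_m^2 r_m$. The chain rule then yields $|\psi_m'|\le C2^m/n_m^2$ and $\diam R_m\lesssim 2^m/n_m^3\to 0$, which gives convergence (Part~1) since $c_k\in R_m$ for $k\ge m$; note that convergence does \emph{not} require the limbs to shrink. For Part~1$'$, the paper shows that under $(Y1)$ the Yoccoz disk $Y_{n_m}(t_m)$ fits inside $B_m$ along a subsequence, so $L(W^m,t_m)\subset\psi_m(Y_{n_m}(t_m))\subset R_m$ and the limbs do shrink. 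This cumulative gain of a factor $q_k^{-2}$ at every level is exactly what your one-step Koebe argument misses.

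For Part~2, the mechanism you defer is the heart of the proof: one parametrizes the orbits by the local variable $s$ with $\rho=\rho_0+s^{q_{m-1}}$ (Lemma~\ref{q^{-3}}(B)), forms $\zeta_m(s)=\bigl(Z_{m+1}(s)-\hat Z_m(s)\bigr)/\bigl(\hat Z_m^+(s)-\hat Z_m(s)\bigr)$, which omits the value $1$ and vanishes only at $s=0$, so $|\zeta_m|\le H(|s|/v_m)$; the denominator is controlled by $|Z_m|<3$ and the Schwarz lemma, giving the factor $\frac{6\pi}{q_{m-1}}\,\frac{u_{m-1}}{1-u_{m-1}}$, and one telescopes down to a fixed point $Z_0^*$ with $|Z_0^*|>1/4$. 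None of this is derivable from ``modulus of a separating annulus'' considerations alone. Finally, you omit the reduction of the theorem to the Main Lemma by passing to a tail $T_{k_0}$ of the sequence and straightening: the hypotheses $(Y0),(Y1),(S)$ are asymptotic, while the quantitative smallness $(\ref{Yrel1})$, $(\ref{exp1})$ must hold from the first step on, so this reduction is a necessary part of the argument.
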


Theorem~\ref{nlccoro} stated in the Introduction
is a simple corollary of Theorem~\ref{intronlc}:
\begin{prop}\label{7imp2}
Assume that $lim_{m\to \infty}q_m=\infty$ and, for some $a<1$
and all $m$ large enough, 
\begin{equation}\label{ta}
|t_{m+1}|\le a^{q_{m}}. 
\end{equation}
Then the 
conditions $(Y0)_{a-b}$, $(Y1)$, and $(S)$ are satisfied.
\end{prop}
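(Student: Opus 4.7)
The plan is to extract a single tower-type growth estimate on $q_m$ from the hypothesis, and then verify each of the four conditions in turn. First I would observe that $|t_{m+1}| = |p_{m+1}|/q_{m+1} \ge 1/q_{m+1}$, so the hypothesis $|t_{m+1}| \le a^{q_m}$ forces
$$q_{m+1} \ge a^{-q_m}.$$
Iterating this together with $q_m \to \infty$, I would establish that for every $\varepsilon > 0$ and all $m$ sufficiently large
$$q_0 q_1 \cdots q_{m-2} = O(\log q_{m-1}), \qquad q_0 q_1 \cdots q_{m-1} = q_{m-1}^{1+o(1)},$$
the mechanism being that each $q_i$ with $i \le m-2$ is of order at most $\log q_{i+1}/\log(1/a)$, so the telescoping product collapses onto a polylogarithmic quantity in the last factor.

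With this bookkeeping, $(Y0)_a$ is immediate from $|t_m| q_0 \cdots q_{m-1} \le a^{q_{m-1}} q_{m-1}^{1+o(1)} \to 0$. For $(Y0)_b$, the numerator is at least $q_{m-1}\log(1/a)$ while the denominator is $O(\log q_{m-1})$, so the ratio in fact diverges rather than being merely positive. For $(Y1)$, both $(q_0 \cdots q_{m-1})^2 = q_{m-1}^{2+o(1)}$ and, for any fixed $\beta > 0$, $\exp(\beta q_0 \cdots q_{m-2}) = q_{m-1}^{O(\beta)}$ are polynomial in $q_{m-1}$, whereas $q_m \ge a^{-q_{m-1}}$ is exponential in $q_{m-1}$; hence the quotient in $(Y1)$ tends to $+\infty$, which a fortiori satisfies the required $\limsup > 0$.

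The condition $(S)$ is the step I expect to require the most care. Fixing any $\gamma > 0$, I would bound
$$u_{k,m}^{q_m} \le a^{q_m} \max\{q_k \cdots q_m,\ \exp(\gamma q_k \cdots q_{m-1})/|p_m q_m|\}.$$
Taking $q_m$-th roots, the growth estimate ensures that both $\log(q_k \cdots q_m)/q_m$ and $(q_k \cdots q_{m-1})/q_m$ tend to $0$, so $u_{k,m} \to a$; in particular $u_{k,m} \le b$ for some fixed $b < 1$ and all $m$ large. Then $H(u_{k,m+1}) \le H(b) < \infty$ and $u_{k,m}/(1-u_{k,m}) \le b/(1-b)$, so each summand of $(S)$ is dominated by a constant multiple of $1/q_m$, and the tower bound forces $\sum 1/q_m < \infty$. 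The main technical obstacle throughout is establishing the tower estimate on $q_m$ cleanly enough to yield the polylogarithmic control of the partial products $q_0 \cdots q_{m-1}$; once that is in hand, each of the four conditions follows from a short inequality.
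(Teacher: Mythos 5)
Your proof is correct and follows essentially the same route as the paper's: extract a superpolynomial growth estimate for $q_{m+1}$ in terms of $q_m$ from $|t_{m+1}|\ge 1/q_{m+1}$, deduce that the partial products $q_0\cdots q_{m-1}$ are controlled by a small power of the last factor, and then verify each of the four conditions by a short inequality (the paper uses the cruder consequence $q_{m+1}>q_m^3$, hence $q_0\cdots q_{m-1}\lesssim q_{m-1}^{3/2}$, where you use polylogarithmic control, and it likewise fixes $k$ large in $(S)$ so that $u_{k,m}\le a_1<1$ and sums $1/q_m$). One caveat: your intermediate claim $q_0\cdots q_{m-2}=O(\log q_{m-1})$ is too strong as stated, since the remaining product $q_0\cdots q_{m-3}$ is unbounded in $m$ and one only gets $(\log q_{m-1})^{1+o(1)}$, i.e.\ $q_{m-1}^{o(1)}$ --- but this weaker bound is all that your subsequent estimates actually require.
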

\begin{proof}
One can assume that, for every $m\ge m_0$,
$q_m$ is large enough and (\ref{ta}) holds.
Then (\ref{ta}) implies $q_{m+1}>q_{m}^3$, for $m\ge m_0$,
which, in turn, implies that
\begin{equation}\label{ozenka}
q_m>(q_{m_0}...q_{m-1})^2 q_{m_0},
\end{equation} 
for $m>m_0$. 
Using this, we get, for $m\ge m_0+2$, 
$$q_m/(1/a)^{q_0...q_{m-2}}\ge (1/a)^{q_{m-1}}a^{q_0...q_{m-2}}=
(1/a)^{q_{m-1}-q_0...q_{m-2}}\to \infty.$$ 
In particular, $(Y1)$ holds with $\beta=\log(1/a)$. 
As for $(Y0)_{a-b}$, we have (for $m\ge m_0+2$):
$$|t_m|q_0...q_{m-1}<a^{q_{m-1}}q_0...q_{m_0-1}(q_{m_0}...q_{m-2})q_{m-1}<
a^{q_{m-1}}q_{m-1}^{3/2}(q_0...q_{m_0-1}),$$ 
and the latter sequence is bounded
(it tends to zero). It proves $(Y0)_a$. In turn,
by~(\ref{ta})-(\ref{ozenka}),
$$\frac{\log|p_{m-1}q_{m-1}/t_m|}{q_0...q_{m-2}}
\ge 
\frac{q_{m-1}\log (1/a)}{q_0...q_{m-2}}\to \infty,$$
which proves $(Y0)_b$.
Let us varify $(S)$. Using~(\ref{ozenka}) with $k>m_0$ instead of $m_0$, 
we have, for $m\ge k$: 
$u_{k, m}<a 
\max\{(q_m^{3/(2q_m)}, \exp(\gamma q_k...q_{m-1}/q_m)\} <a_1$, 
for some $a<a_1<1$ provided $k\ge m_0$ is large enough and $m\ge k$. 
Fixing such $k$, we have for $m\ge k$,
$H(u_{k, m+1})u_{k, m}/(1-u_{k, m})<H(a_1) a_1/(1-a_1)$.
On the other hand, $q_m>2^{3^{m-k}}$, $m\ge k$.
Therefore, (\ref{Srel0}) holds, too.
\end{proof}
Note that, with the help of the bound~(\ref{bh}), see Subsection~\ref{H},
one can easily find sequences $t_m$, such that 
$|t_m|^{1/q_{m-1}}\to 1$ and such that
the conditions of Theorem~\ref{intronlc} hold. 

\

The rest of the Section occupies the proof of Theorem~\ref{intronlc}.
\subsection{Bifurcations}\label{bif}
Let $W$ be an $n$-hyperbolic component,
and let $c_0\in \partial W$ have an internal argument
$t_0=p/q\not=0$. 
Consider the periodic orbit $O(c)=\{b_j(c)\}_{j=1}^n$ 
of $f_c$ which is attracting when $c\in W$
(that is, $O$ is the orbit denoted by $O_W$ in Subsection~\ref{hyper}).
Then all $b_j(c)$ as well as the mulptiplier $\rho_W(c)$ of $O(c)$
are holomorphic in $W$ and
extend to holomorphic functions in $c$ in the whole wake $W^*$ of $W$.
As we know, the function $\rho_W(c)$ is injective
near $c_0$. Consider the inverse function
$\psi_W$. It is well defined and univalent 
in the domain $\tilde \Omega_n$,
which includes the unit disk 
and a neighborhood of the point
$$\rho_0=\exp(2\pi i p/q),$$ 
so that $\psi_W(\rho_0)=c_0$.
It is convenient to use also the composition
$$\psi_W^{\log}=\psi_W\circ \exp.$$ 
It is defined and holomorphic in the
domain $\tilde\Omega_n^{\log}$, which includes the left half-plane
$\{L: Re(L)<0\}$ and a neighborhood of the point $2\pi i p/q$.
Recall that $W(p/q)$ denotes a hyperbolic component
touching $W$ at the point $c_0$. 

The following well-known picture describes the (local) bifurcation near $c_0$.
For the proof, see e.g~\cite{Che} (for $n=1$), \cite{Lij} or \cite{Mi2}.
Let us fix a disk $B(0,\delta)$,
where $\delta>0$ is so small,
that $\psi_W$ is univalent in $B(0,1)\cup B(\rho_0, \delta^q)$.
Given $s\in B(0,\delta)$,
define $\rho=\rho_0+s^q$ and $c=\psi_W(\rho)$. Fix a small
neighborhood
$E$ of the set $O(c_0)$. 
For $1\le k\le n$, there exists a function $F_k$, which
is defined and holomorphic in $B(0,\delta)$,
$F_k(0)=0$, $F_k'(0)\not=0$, such that,
for every $s\in B(0,\delta)$, $s\not=0$,
the points 
$b_{k,j}^{p/q}(s)=b_k(c_0)+F_k(s\exp(2\pi i \frac{j}{q}))$, 
$1\le k\le n$, $0\le j\le q-1$,
are the only fixed points of $f_c^{nq}$ in the neighborhood $E$,
which are different from the points of $O(c)=\{b_k(c)\}_{k=1}^n$. 
They form a periodic orbit $O^{p/q}(c)$ of $f_c$ of period $nq$,
which collides with $O(c)$ as $c\to c_0$.
Denote $\hat B=\psi_W(B(\rho_0, \delta^q))$.
The multiplier of $O^{p/q}(c)$ is the product
$2^{nq}\Pi_{1\le k\le n, 0\le j\le q-1}b_{k,j}^{p/q}(s)$, 
which is invariant under the change $s\mapsto s\exp(2\pi i/q)$. 
Hence,
this multiplier is, in fact, 
a non-constant holomorphic function on $c\in \hat B$, which takes the value
$1$ at $c=c_0$. 
As the map $f_c$ has at most one non-repelling
periodic orbit, the cycle $O^{p/q}(c)$ is attracting for
$c\in \hat B\cap W(p/q)$. 
Therefore, for such $c$, $O^{p/q}(c)$ is just the cycle
$O_{W(p/q)}(c)$ of period $nq$, which exists and attracting
throughout $W(p/q)$. In particular,
the multiplier of $O^{p/q}(c)$ is just the multiplier $\rho_{W(p/q)}(c)$
of the attracting periodic orbit of $f_c$, for $c\in W(p/q)$.

Let us make a general remark.
Assume that, for some $m\ge 1$
and for any $c$ in some domain $\Omega$, the map 
$f_c^m$ has no fixed points with multiplier $1$.
(For example, this is the case, for any $m$, if $\Omega$ is
a hyperbolic component.)
Then, by the Implicit Function Theorem, 
every fixed point of $f_c^m$ as well as
its multiplier is defined locally as a holomorphic
function, which has an analytic contination along every curve in $\Omega$.
As for the continuation
of the multiplier function, a weaker condition is enough.
By the above local bifurcation picture, the multiplier of a periodic orbit
of $f_c$ of period $m$ extends analytically through a neighborhood
of any parameter $\hat c$ unless $f_{\hat c}$ 
has a periodic orbit of (exact) period $m$ with multiplier $1$
(i.e., $\hat c$ is the root of a primitive $m$-hyperbolic component).
Assume now that, for any $c\in\Omega$, the map 
$f_c$ has no periodic orbits of period $m$
with multiplier $1$. Then we have, that the multiplier
of any periodic orbit of $f_c$ of period $m$, which is defined locally
near $c\in \Omega$, has an analytic continuation along every curve 
in $\Omega$, which starts at $c$. 

We will be concerned with the problem of holomorphic ($=$analytic)
extensions ($=$continuations) of the multiplier
functions $\rho_W$ and $\rho_{W(p/q)}$ from a domain to a bigger domain.
As the multiplier $\rho_{W(p/q)}(c)$ is holomorphic in a small neighborhood
$\hat B$ of $c_0$ and, by the above, it extends from $\hat B$
to holomorphic functions
defined in $W(p/q)$ and in $W$,
$\rho_{W(p/q)}$ extends to a holomorphic function defined
in the simply-connected domain $\hat B\cup W\cup W(p/q)$.
Recall also that $\rho_{W(p/q)}$
has an analytic continuation from $W(p/q)$
to the wake $W(p/q)^*$, and $|\rho_{W(p/q)}|>1$ in
$W(p/q)^*\setminus \overline{W(p/q)}$. 
Thus $\rho_{W(p/q)}$
is holomorphic in the domain $W\cup \hat B\cup W(p/q)^*$,
and $|\rho_{W(p/q)}|>1$ in
$(W\cup W(p/q)^*)\setminus \overline{W(p/q)}$. 
 
Now, since $\rho_W$ is univalent in $W\cup \hat B$, the function
$\rho_W(p/q)$ is
an implicit function of $\rho=\rho_W$ in $B(0,1)\cup B(\rho_0, \delta^q)$. 
We study $\rho_{W(p/q)}$ as a function of $\rho_W$ whenever 
it makes sense.
The following relation between $\rho_{W(p/q)}$ and $\rho_W$
at the bifurcation parameter $c_0$
is proved in~\cite{Gu}:
\begin{equation}\label{gu} 
\frac{d\rho_{W(p/q)}}{d\rho_W}(c_0)=-\frac{q^2}{\rho_0}.
\end{equation}
Another important ingredient for us is an inequality connecting
the multipliers $\rho_{W(p/q)}(c)$ and $\rho_W(c)$
when $c$ lies in the hyperbolic component $W$.
For $c\in W$, such that $\rho_W(c)\not=0$, the following bound takes place:
\begin{equation}\label{inside}
\frac{|\log\rho_{W(p/q)}(c)|^2}{\log|\rho_{W(p/q)}(c)|}
<q^2\frac{|\log\rho_W(c)-2\pi i p/q|^2}
{-\log|\rho_W(c)|},
\end{equation}
for some branch of $\log\rho_{W(p/q)}(c)$ and any branch
of $\log\rho_W(c)$.
For the proof, see Appendix.

Let us introduce the function
$$\Psi_{W,p/q}=\rho_{W(p/q)}\circ \psi_W^{\log}.$$
It is holomorphic in the left half-plane union with a neighborhood
of the boundary point $2\pi i p/q$, and 
$\Psi_{W,p/q}(2\pi i p/q)=1, \ \ \Psi_{W,p/q}'(2\pi i p/q)=-q^2$.
(The latter holds by~(\ref{gu}).) We want to know how far $\Psi_{W,p/q}$ is
univalent.
The main technical part is contained in the next Lemma~\ref{q}
having an independent interest.

Suppose $g: B(0,1)\to {\bf C}$ is a univalent function,
and $U$ a simply-connected domain, such that
$B(0,1)\cup U$ is also a simply-connected domain.
(Below, $U$ will be either a disk
or the image of a disk
by exponential map.)
We say
that $g$ has a {\it univalent extension
to} $B(0,1)\cup U$, if there is a function
(denoted by the same letter $g$),
which is holomorphic in $B(0,1)\cup U$,
coincides with $g$ in $B(0,1)$ and is (globally) univalent in $B(0,1)\cup U$.
\begin{lem}\label{q}
For every $X>0$ there exist $0<\Lambda_0<\Lambda<1$ depending only on $X$,
such that the following properties hold. 
Assume that, for some $0<r<1$, 
the function $\psi_W$ has a 
univalent extension
to $B(0,1)\cup U$, where 
$$U=\exp(B(2\pi i p/q, r))=\{\exp(w): w\in B(2\pi i p/q, r)\}.$$
Assume further that 
the topological disk
$V=\psi_W^{\log}(B(2\pi i p/q, r))=\psi_W (U)$ 
containing $c_0$ 
obeys the following two disjointness properties:

(a) $V$ is disjoint with 
any limb $L(W, p'/q')$ of $W$ 
other than $L(W, p/q)$ and such that $q'\le q+1$,

(b) $V$ is disjoint with the subset of $M$, which 
is outside of the wake of $W$:
$$V\cap (M\setminus W^*)=\emptyset.$$
Besides, $V$ is disjoint with the parameter ray of argument zero,
i.e. with the set $R_0=\{c>1/4\}$.

Then the following conclusions hold.

I. The function
$\rho_{W(p/q)}(c)$ extends to a holomorphic function defined in 
the domain $V^{p/q}:=V\cup W(p/q)^*\cup W$, 
and, for each $\lambda\in \overline{B(0,1)}$ there is a unique
$c_\lambda\in V^{p/q}$, such that $\rho_{W(p/q)}(c_\lambda)=\lambda$.
Clearly, $c_\lambda\in \overline{W(p/q)}$.

II. 
If $r q^2<X$, then
the function $\Psi_{W,p/q}$ is well-defined and univalent in the disk
$B(2\pi i p/q, \Lambda r)$, and
$\frac{2}{3}q^2<|\Psi_{W,p/q}'(L)|<\frac{3}{2}q^2$,
for every $L\in B(2\pi i p/q, \Lambda_0 r)$.
In particular, the following covering property holds:
for every $\theta\le \Lambda_0 r$, the image of 
$B(2\pi i p/q, \theta)$ under the map $\Psi_{W, p/q}$
covers $B(1,2 q^2 \theta/3)$ and is covered by 
$B(1,3 q^2 \theta/2)$.

Furthermore, the function $\psi_{W(p/q)}$ 
(the inverse to $\rho_{W(p/q)}: W(p/q)\to B(0,1)$) has a 
univalent extension to $B(0,1)\cup B(1,2 q^2 \Lambda_0 r/3)$.
\end{lem}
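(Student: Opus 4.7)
The plan decomposes along the two conclusions. For \textbf{Conclusion I}, I would argue by analytic continuation. The function $\rho_{W(p/q)}$ is already holomorphic on the simply-connected open set $W \cup W(p/q)^* \cup \hat B$, where $\hat B$ is the small neighborhood of $c_0$ supplied by the local bifurcation picture of Section~\ref{bif}. To extend along paths starting in $\hat B$ and running through $V$, it is enough to exclude parameters where the specific period-$nq$ orbit bifurcating from $W$ at $c_0$ acquires multiplier one. These are precisely roots of hyperbolic components whose attracting orbit descends from that bifurcation. Hypothesis (a) excludes all non-primitive candidates $W(p'/q')$ with $q' \le q+1$ and $p'/q' \ne p/q$; since any other non-primitive $W(p'/q')$ contributing a period dividing $nq$ would force $q' \mid q$ and hence $q' \le q$, no further non-primitive obstructions remain. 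Hypothesis (b) together with Commentary~\ref{lcw} confines the $M$-points of $V \cap W^*$ to $\overline W$ and to limbs $L(W, p'/q')$ with $q' \ge q+2$; any primitive components inside such limbs carry orbits unrelated to ours. Disjointness from $R_0$ secures the simple connectedness of $V^{p/q}$, so the continuation is single-valued. Bijectivity of $\rho_{W(p/q)}\colon \overline{W(p/q)} \to \overline{B(0,1)}$ is the Douady--Hubbard--Sullivan theorem; the uniqueness of $c_\lambda$ in the larger domain $V^{p/q}$ then follows from the fact that the extended $\rho_{W(p/q)}$ is the multiplier of the tracked orbit, which is attracting only inside $W(p/q)$.

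For \textbf{Conclusion II}, I first record that $\Psi_{W,p/q}(L_0) = 1$ and, by~(\ref{gu}) combined with the chain rule, $\Psi_{W,p/q}'(L_0) = -q^2$, where $L_0 = 2\pi i p/q$. The plan is then to rescale by setting
\[
g(w) \;=\; \frac{1 - \Psi_{W,p/q}(L_0 + r w)}{q^2 r}, \qquad w \in B(0,1),
\]
so that $g(0) = 0$ and $g'(0) = 1$. If one can prove a bound $|g(w)| \le M(X)$ on $B(0,1)$ depending only on $X = r q^2$, then classical distortion theory (Koebe's theorem together with standard quantitative lemmas for bounded normalized functions) produces constants $0 < \Lambda_0 < \Lambda < 1$, depending only on $X$, such that $g$ is univalent on $B(0,\Lambda)$ and satisfies $2/3 < |g'(w)| < 3/2$ on $B(0,\Lambda_0)$. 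Rescaling back gives the stated univalence and derivative bounds for $\Psi_{W,p/q}$. The covering property on $B(L_0,\theta)$ follows from Koebe's one-quarter theorem applied to the normalized univalent map, and the univalent extension of $\psi_{W(p/q)}$ to $B(0,1) \cup B(1, 2q^2 \Lambda_0 r/3)$ from the identity $\psi_{W(p/q)} = \psi_W^{\log} \circ \Psi_{W,p/q}^{-1}$ on that image.

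The uniform bound on $g$ is the main technical step and the main obstacle. On the half-ball where ${\rm Re}\, L < 0$, i.e., $\psi_W^{\log}(L) \in W$, the inequality~(\ref{inside}) delivers exactly the control one needs: after a short manipulation using $|L - L_0| \le r$ and $r q^2 < X$, it translates into an estimate of the form $|\Psi_{W,p/q}(L) - 1| \lesssim q^2 |L - L_0|$. On the complementary half, where $\psi_W^{\log}(L)$ lies either in $W^* \setminus \overline W$ or outside $M$, the inequality~(\ref{inside}) does not apply, and I would instead invoke Conclusion~I together with the Yoccoz circle bound~(\ref{yoccircle}) applied to the limbs $L(W, p'/q')$ with $q' \ge q+2$ that $V$ may meet; this pins down $\log \Psi_{W,p/q}$ up to a uniform constant there. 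Matching these two half-bounds and verifying that the Koebe-type constants genuinely depend only on $X$ is where the full strength of hypotheses (a), (b) and the exclusion of $R_0$ will be consumed.
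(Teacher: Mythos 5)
Your overall architecture matches the paper's (Part I by analytic continuation, Part II by normalizing $\Psi_{W,p/q}$ to a function $g$ with $g(0)=0$, $|g'(0)|=1$ and extracting univalence on a definite disk), but there are two genuine gaps. In Part I, the real obstruction to continuing $\rho_{W(p/q)}$ is any parameter $\hat c$ at which $f_{\hat c}$ has a periodic orbit of exact period $nq$ with multiplier $1$ (the root of a \emph{primitive} $nq$-hyperbolic component); such roots can sit deep inside the limbs $L(W,p'/q')$ with $q'\ge q+2$ that $V$ is allowed to meet. Your dismissal that these ``carry orbits unrelated to ours'' does not help: an analytic continuation of the multiplier of period-$nq$ orbits can branch at any such parameter, related or not. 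The paper excludes them by the combinatorial Proposition~\ref{combin} (proved via the width formula~(\ref{width})): if $c$ lies in $L(W,p'/q')$ and $f_c^{nq}$ has a fixed point of multiplier $1$, then $q\ge q'-1$, which is incompatible with $q'>q+1$. The same lemma is what underlies the uniqueness of $c_\lambda$ (your assertion that the continued multiplier ``is attracting only inside $W(p/q)$'' is precisely what has to be proved, and it is proved by ruling out $nq$-hyperbolic components in the far limbs). Without this ingredient Part I is not closed.

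In Part II the main technical step — a bound on $g$ depending only on $X$ — is where your route diverges from the paper's and where it does not go through as sketched. You propose to bound $|g|$ pointwise on all of $B(0,1)$, handling the half of the disk where $\psi_W^{\log}(L)\notin \overline W$ by ``Yoccoz circles applied to the limbs $V$ may meet.'' But the Yoccoz inequality~(\ref{yoccircle}) controls $\log\rho_W$, the multiplier of the period-$n$ orbit, not $\rho_{W(p/q)}$, the multiplier of the period-$nq$ orbit; at points of $V$ outside $M$ or in far limbs there is no evident a priori upper bound on $|\rho_{W(p/q)}-1|$ of the required order $q^2|L-L_0|$. The paper avoids this entirely: it bounds $g$ at the \emph{single} point $w=-1/2$ (where $\psi_W^{\log}(L)\in W$ and the inequality~(\ref{inside}) applies), and then upgrades this to a bound on $|w|\le 9/10$ using Montel's theory of quasinormal families (Proposition~\ref{mon}), whose hypotheses — $g$ vanishes only at $0$ and takes the value $-1/(2X)$ at most once — are exactly the value-distribution consequences of Part I. Univalence then comes from Nehari's lemma (Proposition~\ref{unival}), which exploits the nonvanishing of $g$ on the punctured disk; a Landau-type argument from a uniform bound alone would also work qualitatively, but only once the uniform bound exists, and producing that bound is the step your proposal leaves open.
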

\begin{com}\label{disj}
Roughly speaking, the conditions (a) and (b) guarantee that
the function $\rho_{W(p/q)}$ extends analytically through every point
in $V\cap W^*$ and $V\setminus W^*$ respectively.
The restriction (a) is ``local'' (inside of the wake) while
(b) is a ``global'' one (in the rest of $M$). 
We give bounds on $r$ from below in terms of $n$ and $p/q$ to satisfy
(a) and (b) in Lemma~\ref{r} and Lemma~\ref{rb} respectively. 
\end{com}
The following combinatorial fact appears in Lemma 6.1 of
\cite{Lij} (in a slightly different form). 
For completeness, we reproduce its short proof here:
\begin{prop}\label{combin}(cf.~\cite{Che})
Let $W$ be an $n$-hyperbolic component.
Let also $c\in L(W, t')\cup c(W, t')$, for some
$t'=p'/q'$ and $q'>2$. Assume that $f_c^{nQ}$ has a fixed point 
with the multiplier $1$. Then
$$Q\ge q'-1$$
\end{prop}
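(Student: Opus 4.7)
The plan is to associate to the given multiplier-one fixed point of $f_c^{nQ}$ a hyperbolic component $W_\ast$ whose root is $c$ and whose period divides $nQ$, and then to use the Yoccoz wake structure (Comment~\ref{lcw}) to conclude that any such component has period at least $nq'$. This in fact yields the slightly stronger bound $Q\ge q'$.

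Let $z$ be the given fixed point of $f_c^{nQ}$, let $m\mid nQ$ be its exact $f_c$-period, and set $\rho=(f_c^m)'(z)$. The chain rule gives $\rho^{nQ/m}=1$, so $\rho=\exp(2\pi i P/Q')$ with $\gcd(P,Q')=1$ and $Q'\mid nQ/m$. Using the local bifurcation picture recalled in Section~\ref{bif}: if $\rho=1$ then $c$ is the root of a hyperbolic component $W_\ast$ of period $m$; if $\rho\ne 1$ then $c$ lies on the boundary of a unique hyperbolic component $W_1$ of period $m$ at internal argument $P/Q'$, and the satellite $W_\ast:=W_1(P/Q')$ has root $c$ and period $mQ'$. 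Either way $W_\ast$ has root $c$ and period $M_\ast:=mQ'$, with $M_\ast\mid nQ$.

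It remains to locate $W_\ast$ inside $\overline{L(W,t')}$ and bound its period from below. Recall $L(W,t')=M\cap W(t')^\ast$ and that $c(W,t')$ is the root of $W(t')$. If $c=c(W,t')$, the uniqueness of the hyperbolic component rooted at this bifurcation point forces $W_\ast=W(t')$, of period $nq'$. Otherwise $c\in L(W,t')$ lies strictly inside the open wake $W(t')^\ast$; since $W_\ast$ is a connected open subset of $M$ accumulating at $c$ and disjoint from the two parameter rays bounding $W(t')^\ast$, we obtain $W_\ast\subset M\cap W(t')^\ast=L(W,t')$. Applying Comment~\ref{lcw} inside $W(t')^\ast$, every hyperbolic component contained in $\overline{L(W,t')}$ is either $W(t')$ (period $nq'$) or sits inside some sublimb $L(W(t'),p''/q'')$, whose attached component $W(t')(p''/q'')$ already has period $nq'q''\ge nq'$. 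Hence $M_\ast\ge nq'$, so $nQ\ge nq'$, i.e., $Q\ge q'\ge q'-1$.

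The main obstacle is the classification of $W_\ast$ in the second step when $\rho\ne 1$: one must verify that the hyperbolic component $W_1$ on whose boundary $c$ sits at internal argument $P/Q'$ is uniquely determined by the orbit of $z$, and that its satellite $W_1(P/Q')$ has root exactly $c$ and period $mQ'$. These are standard consequences of the Douady--Hubbard--Sullivan theorem combined with the local bifurcation analysis reviewed just above the lemma, so no further machinery is required.
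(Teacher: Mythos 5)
Your strategy is genuinely different from the paper's: you classify the parabolic parameter $c$ as the root of a hyperbolic component $W_\ast$ of period $M_\ast=mQ'$ dividing $nQ$, locate $W_\ast$ inside the limb, and then try to bound $M_\ast$ below by $nq'$. The paper instead argues in the dynamical plane of $f_c$: the parabolic point $a$ attracts the critical value, hence lies in the closure of the sector cut out by the dynamical rays of angles $\ell_\pm(W(t'))$; since $(f_c^{nQ})'(a)=1$, at least two rays landing at $a$ are fixed by $f_c^{nQ}$, so their angles are distinct multiples of $1/(2^{nQ}-1)$ squeezed into an interval of length $\ell_+(W(t'))-\ell_-(W(t'))\le (2^n-1)^2/(2^{nq'}-1)$ by~(\ref{width}); this gives $nQ\ge nq'-2n+1$, i.e.\ $Q\ge q'-1$. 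The first two steps of your argument (the classification of $W_\ast$ and the containment $W_\ast\subset L(W,t')$, with $M_\ast\mid nQ$) are sound.

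The gap is in the last step. Comment~\ref{lcw} applied to $W(t')$ only yields the dichotomy: either $W_\ast=W(t')$, or $W_\ast$ lies in some sublimb $L(W(t'),p''/q'')$. Your sentence ``whose attached component $W(t')(p''/q'')$ already has period $nq'q''\ge nq'$. Hence $M_\ast\ge nq'$'' is a non sequitur: the period of the satellite attached at the root of that sublimb tells you nothing a priori about the period of an arbitrary component $W_\ast$ sitting deeper inside the sublimb --- $W_\ast$ need not be that satellite, and need not be a satellite at all. What you actually need is the assertion that $nq'$ is the minimal period of a hyperbolic component in the wake $W(t')^*$. That assertion is true, but it requires its own proof: either iterate your dichotomy and show the descent through nested sublimbs terminates at $W_\ast$ (which needs an extra argument, e.g.\ that a hyperbolic component cannot be contained in infinitely many nested satellite wakes, since its maps are only finitely renormalizable), or run an angle-counting argument of exactly the kind the paper uses --- and the latter only gives $M_\ast\ge nq'-2n+1$, i.e.\ the paper's $Q\ge q'-1$ rather than your claimed $Q\ge q'$. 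So the proof as written has a hole precisely at the step that would have delivered the improved bound.
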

\begin{proof}
Consider the dynamical plane of $f_c$.
The critical value $c$ lies in the sector $S$
bounded by the dynamical rays of arguments $\ell_\pm(W(t'))$
and disjoint with $0$. On the 
other hand, $c$ belongs to a petal at a fixed point
$a$ of the map $f_c^{nQ}$ with the multiplier $1$.
Hence, $a$ is in the closure of the same sector $S$, too.
Since $(f_c^{nQ})'(a)=1$, every dynamical ray of $f_c$,
which lands at $a$, is fixed by
$f_c^{nQ}$ (see e.g.~\cite{Milbook}). But since $c\not=1/4$,
$a$ is a landing point of at least two rays. Therefore, there are two rays
$R_{t_1}, R_{t_2}$, $0<t_1<t_2<1$ (landing at $a$), which are fixed by
$f_c^{nQ}$ and which lie in the closure of $S$.  
Then
$\ell_{+}(W(t'))-\ell_{-}(W(t')) \ge t_2-t_1\ge (2^{nQ}-1)^{-1}$.
Apply the formula (\ref{width}). It gives us
$nQ\ge nq'-2n+1$, that is, $Q\ge q'-1$.   
\end{proof}
Now we start the proof of Lemma~\ref{q}.
We drop some indices and write
$$\psi=\psi_W, \ \ \rho_{p/q}=\rho_{W(p/q)},$$ 
and also
$$\Psi=\Psi_{W, p/q}, \ \ \ B=B(2\pi i p/q, r).$$
First, we show Part I. 
We prove a more general statement, see Proposition~\ref{rhocont} below.
Given the hyperbolic component $W$ and the point $t_0=p/q\not=0$,
we define a simply-connected domain $D(W, p/q)$ as follows:
\begin{equation}\label{D}
D(W, p/q)={\bf C}\setminus (R_0\cup M_1\cup M_2\cup M_3).
\end{equation}
Here $R_0=\{c>1/4\}$ is the parameter ray to the root $c=1/4$ of the
main cardioid, and $M_i$, $1\le i\le 3,$ are the following subsets
of $M$:

(i) $M_1=M\setminus W^*$. Note that $M_1$ is a continuum, which contains
$c_W$ and $c=1/4$ (unless $W$ is the main cardioid and $M_1=\{1/4\}$),

(ii) $M_2=\cup L(W, p'/q')$, over all $p'/q'\not=p/q$, such that $q'\le q+1$,

(iii) $M_3$ is the shortest subarc of the simple closed curve $\partial W$,
which contains all $c(W, p'/q')$ with $p'/q'$ as in (ii), i.e., 
$p'/q'\not=p/q$ and $q'\le q+1$. In other words, if,
for $0<t_1<t_2<1$, we denote $l(t_1, t_2)=\{c(W, t): t_1<t<t_2\}$
(the open subarc of $\partial W$ with the end points $c(W, t_1)$
and $c(W, t_2)$, which is disjoint with $c_W$), then 
$M_3=\partial W\setminus l(t_-, t_+)$. Here $t_{\pm}\in (0,1)$
are the closest points to $t_0$ of the form $p'/q'\not=0$ with $q'\le q+1$
from the left and from the right of $t_0$
(note that $t_{\pm}$ exist since $1/(q+1)<t_0<q/(q+1)$).

The set $R_0\cup M_1\cup M_2\cup M_3$ is connected, closed, unbounded,
and does not separate the plane. So, $D(W, p/q)$ is a
simply-connected domain. Let us show that the domain 
$V^{p/q}=W\cup W(p/q)^*\cup V$ is contained in $D(W, p/q)$. Indeed,
$W$ and $W(p/q)^*$ are disjoint with $R_0$ as well as with $M_i, i=1,2,3$.
Also, by the condition (a), $V$ is disjoint with
$M_2$ and, by the condition (b), $V$ is disjoint
with $M_1\cup R_0$. To show that $V$ is dosjoint with $M_3$,
it is enough to prove that, for every $r'$, $0<r'<r$, the domain 
$V(r')=\psi_W^{\log}(B(2\pi i p/q, r'))$ is disjoint with $M_3$.
Fix such $r'$. Let us use the condition that
$\psi$ has a 
univalent extension to $B(0,1)\cup U$.
It implies that the domain $W\cup V=\psi(B(0,1)\cup U)$ is simply-connected.
We have: $V(r')=\psi(U(r'))$, where $U(r')=\exp(B(2\pi i p/q, r'))$.
And since $r'<r$, the boundary of 
$W\cup V(r')=\psi(B(0,1)\cup U(r'))$ is a simple closed curve.
In particular,
$\overline{V(r')}\cap \partial W$ 
is the closure $\overline{l_0}$ of a single open arc 
$l_0=
\psi(\partial B(0,1)\cap U(r'))$
containing $c_0$ because otherwise there would be a common point in the
boundaries of $W$ and $V(r')\setminus \overline{W}$ 
outside of the closed arc $\overline l_0$, a contradiction with the fact
that $\partial(W\cup V(r'))$ is a simple curve.
By the condition (a), $l_0\subset l(t_-, t_+)$,
and, by the definition, $M_3=\partial W\setminus l(t_-, t_+)$.
Therefore, $V(r')\cap M_3=\emptyset$.
We have shown that $V^{p/q}\subset D(W, p/q)$. 

As we know, $\rho_{p/q}$ is well-defined
and holomorphic in the domain $D_0=\hat B\cup W(p/q)^*\cup W$,
where $\hat B$ is a small neighborhood of the common boundary
point $c_0$ of $W$ and $W(p/q)^*$. 
As $W(p/q)^*$, $W$ are subsets of $D(W, p/q)$ and $\hat B$ is small, one
can assume that $D_0\subset D(W, p/q)$.
Since $V^{p/q}\subset D(W, p/q)$, 
the part I follows immediately from a general
\begin{prop}\label{rhocont}
The function
$\rho_{p/q}$ extends from $D_0$ to a holomorphic function defined in 
the domain $D(W, p/q)$, 
and, for each $\lambda\in \overline{B(0,1)}$ there is a unique
$c_\lambda\in D(W, p/q)$, such that $\rho_{p/q}(c_\lambda)=\lambda$.
Besides, $c_\lambda\in \overline{W(p/q)}$.
\end{prop}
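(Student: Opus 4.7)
The plan is to invoke the monodromy theorem: since $D(W, p/q)$ is simply connected (as noted right after its definition) and $\rho_{p/q}$ is already holomorphic on $D_0 \subset D(W, p/q)$, it suffices to show that $\rho_{p/q}$ admits analytic continuation along every arc in $D(W, p/q)$ issuing from $D_0$. By the bifurcation picture recalled at the start of Section~\ref{bif}, the continuation of the multiplier of the specific orbit $O_{W(p/q)}(c)$ can fail at a parameter $\hat c$ only when $f_{\hat c}^{nq}$ possesses a fixed point with multiplier $1$. The heart of the proof will therefore be a classification of these ``bad'' parameters, showing that each of them either belongs to the removed set $R_0 \cup M_1 \cup M_2 \cup M_3$ or else coincides with $c_0$, where the extension is already provided through $\hat B \subset D_0$.

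I would carry out the classification case by case. If $\hat c \notin M$, the map $f_{\hat c}$ is hyperbolic with Cantor Julia set, so every cycle is repelling and no obstruction arises. If $\hat c \in M \setminus W^*$ then $\hat c \in M_1$ by definition. If $\hat c \in \partial W$, the multiplier-$1$ condition forces $\hat c = c(W, p''/q'')$ with $q'' \mid q$ (either because $\rho_W(\hat c)^q = 1$, or because a satellite $nq''$-component sprouts here with $q'' \mid q$); since any such $p''/q'' \ne p/q$ has denominator at most $q + 1$, the very definition of $t_\pm$ forces $p''/q'' \notin (t_-, t_+)$, placing $\hat c$ in $M_3 \cup \{c_W\} \subset M_1 \cup M_3$ and leaving only $\hat c = c_0$ unremoved. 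If $\hat c \in L(W, p'/q')$ with $p'/q' \ne p/q$ and $q' \ge 2$, Proposition~\ref{combin} applied with $Q = q$ forces $q' \le q + 1$ (the case $q' = 2$ being trivial), so the entire limb lies in $M_2$. The remaining case $\hat c \in L(W, p/q) \subset \overline{W(p/q)^*}$ is inert because $\rho_{p/q}$ is already holomorphic throughout the open wake $W(p/q)^* \subset D_0$, so no continuation is needed there.

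For the last assertion, the Douady--Hubbard--Sullivan theorem provides the homeomorphism $\rho_{p/q}\colon \overline{W(p/q)} \to \overline{B(0,1)}$, which gives existence of $c_\lambda \in \overline{W(p/q)}$. To exclude a second preimage I would verify that $|\rho_{p/q}(c)| > 1$ everywhere in $D(W, p/q) \setminus \overline{W(p/q)}$: this holds inside $W(p/q)^* \setminus \overline{W(p/q)}$ by hypothesis, on $\C \setminus M$ by hyperbolicity, and on $\overline{W} \setminus \{c_0\}$ by combining the local expansion $\rho_{p/q}(c) = 1 - (q^2/\rho_0)(\rho_W(c) - \rho_0) + O((\rho_W(c) - \rho_0)^2)$ near $c_0$, which forces $|\rho_{p/q}| > 1$ on the $W$-side of $\hat B$, with the inequality~(\ref{inside}), which propagates the bound to all of $W$. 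The principal obstacle I anticipate is making the case analysis on $\partial W$ and across the limbs tight enough that no bifurcation parameter inside $D(W, p/q)$ is missed, and, for the uniqueness clause, exploiting~(\ref{inside}) to guarantee that the extended $\rho_{p/q}$ creates no additional preimage of $\overline{B(0,1)}$ on the parameter region $\overline W$.
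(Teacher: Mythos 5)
Your strategy for the extension part is the paper's: identify the parameters where the continuation of $\rho_{p/q}$ can fail (the paper calls them \emph{suspicious}), show they all lie in the removed set $R_0\cup M_1\cup M_2\cup M_3$ or inside the wake $W(p/q)^*$, and invoke the Monodromy Theorem; the key step, excluding limbs $L(W,p'/q')$ with $q'>q+1$ via Proposition~\ref{combin} with $Q=q$, is exactly the paper's. One structural difference: the paper does not declare the wake ``inert'' but removes the closed wake $K=\overline{W(p/q)^*}$, applies monodromy on the simply connected $D'=D(W,p/q)\setminus K$ (which contains \emph{no} suspicious points), and then glues the result to the a priori holomorphic function on $K$ via the Uniqueness Theorem on an overlap $S\setminus K$. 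Your direct application of monodromy to all of $D(W,p/q)$ can be made to work, but it needs exactly this gluing care: inside the wake there \emph{are} parameters where some period-$nq$ orbit has multiplier $1$ (roots of primitive $nq$-components, satellites of $W(p/q)$), and you must argue that the germ transported along a path re-entering the wake is the germ of the known function there, not of a branch that degenerates.

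The genuine gap is in the uniqueness clause. You propose to verify $|\rho_{p/q}|>1$ on $D(W,p/q)\setminus\overline{W(p/q)}$ by covering it with three regions --- $W(p/q)^*\setminus\overline{W(p/q)}$, ${\bf C}\setminus M$, and $\overline W\setminus\{c_0\}$ --- but these do not exhaust $D(W,p/q)$: the limbs $L(W,p'/q')$ with $p'/q'\not=p/q$ and $q'>q+1$ are \emph{not} removed in the definition of $D(W,p/q)$ (only those with $q'\le q+1$ go into $M_2$), they lie in $M\cap W^*$ outside both $\overline W$ and the wake $W(p/q)^*$, and your case analysis never touches them. This is precisely where the paper's final argument lives: if $|\rho_{p/q}(c_1)|\le 1$ for some $c_1$ in such a limb, then $c_1$ lies in the closure of an $nq$-hyperbolic component $W_1\subset L(W,p'/q')$, the root of $W_1$ carries a fixed point of $f^{nq}$ with multiplier $1$, and Proposition~\ref{combin} forces $q'\le q+1$, a contradiction. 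Without this step uniqueness is not established. A secondary, repairable flaw: inequality~(\ref{inside}) cannot ``propagate'' $|\rho_{p/q}|>1$ over $W$, since its left-hand side is nonpositive (hence the inequality is vacuously true) whenever $|\rho_{p/q}(c)|<1$; the correct and elementary reason is that $f_c$ is hyperbolic for $c\in W$, so every cycle other than $O_W(c)$ is repelling --- a fact already recorded in Section~\ref{bif}.
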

\begin{proof}
The function $\rho_{p/q}$ has an analytic continuation along every
curve starting at $c_0$, which does not contain a parameter $c$,
such that $f_c$ has a periodic orbit of period $nq$ with multiplier $1$.
We will call such parameters $c$ {\it suspicious}. 
Every suspicious point lies in the boundary of $M$.
Denote by $I$ the set of those suspicious points, which are outside
of the wake $W(p/q)^*$. 
Let us prove that 
\begin{equation}\label{I}
I\cap D(W, p/q)=\emptyset.
\end{equation} 
Assume the contrary, i.e., there is $c\in D(W, p/q)\setminus W(p/q)^*$, 
such that $f_c$ has a periodic orbit of period $nq$ with multiplier $1$.
Since $c\in \partial M\cap D(W, p/q)\setminus W(p/q)^*$, 
then either (1) $c\in l(t_-, t_+)$, or
(2) $c\in L(W, p'/q')$, where $p'/q'\not=p/q$ and $q'>q+1$. 
As $f_c$, for $c\in \partial W$, has a neutral periodic
orbit of period $n$, the case (1) is excluded. The case (2)
is excluded by the above Proposition~\ref{combin}
(with $Q=q$). Thus~(\ref{I}) holds.

Denote the closure of $W(p/q)^*$ by $K$.
The function $\rho_{p/q}$ has a holomorphic extension from $W(p/q)^*$
to a small neighborhood $S$ of $K$,
because $\partial K\cap M=\{c_0\}$ and $\rho_{p/q}$ is holomorphic
in the neighborhood $\hat B$ of $c_0$. One can assume that 
$S\subset D(W, p/q)$.
Since $K$ is an unbounded continuum not separating the plane, 
$D'=D(W, p/q)\setminus K$ is a simply-connected 
domain, too. By~(\ref{I}), 
$D'$ does not contain any suspicious point.
Hence, the function $\rho_{p/q}$, which is holomorphic
in a subdomain $S\setminus K$ of $D'$, has an analytic
continuation along every curve in $D'$.
By the Monodromy Theorem,
$\rho_{p/q}$ has a well-defined analytic continuation $\tilde \rho_{p/q}$
from $S\setminus K$ to 
$D'$, and, by the Uniqueness Theorem, $\tilde \rho_{p/q}$ on $D'$ and 
$\rho_{p/q}$ on $K$ define
an analytic continuation of $\rho_{p/q}$ to $D(W, p/q)=D'\cup K$.

Thus we have shown that the function $\rho_{p/q}$
extends from $D_0$ to a holomorphic function defined 
in the domain $D(W, p/q)$.
As $\rho_{p/q}: \overline{B(0,1)}\to \overline{W(p/q)}$ is
a homeomorphism and
$|\rho_{p/q}|>1$ in $(\overline{W(p/q)^*}\setminus \overline{W(p/q)})\cup W$, 
then, {\it for every $|\lambda|\le 1$ there is one and only one
$c_{\lambda}$ in $\overline{W(p/q)^*}\cup W$, such that 
$\rho_{p/q}(c_\lambda)=\lambda$. Moreover,} 
$c_\lambda\in \overline{W(p/q)}$.
Denote $\tilde D=D(W, p/q)\setminus (\overline{W(p/q)^*}\cup W)$.
It remains to show that $|\rho_{p/q}|>1$ in $\tilde D$.
Observe that, for every $c\in D(W, p/q)$, except for perhaps
finitely many $c$ (for which $c$ is the root of a non-primitive
$nq$-hyperbolic components), $\rho_{p/q}$ is the multiplier
of some periodic orbit of $f_c$ of period $nq$. 
Assume now, by a contradiction, that, for some $|\lambda|\le 1$ and
$c_1\in \tilde D$, $\rho_{p/q}(c_1)=\lambda$.
Then $c_1$ lies in the closure of some $nq$-hyperbolic component $W_1$. 
Since $c_1\notin M_1\cup L(W, p/q)$, then $W_1$ lies in a limb
of $W$ other than its $p/q$-limb. Hence, for some $p'/q'\not=p/q$, 
$c_1\in \overline{W_1}\subset L(W, p'/q')\cup \{c(W, p'/q')\}$.
We have: $q'>q+1$, because otherwise $c_1\in M_2$. 
On the other hand, consider the root $\tilde c$ of $W_1$.
Then the map $f_{\tilde c}^{nq}$ has a fixed point with multiplier $1$.
By Proposition~\ref{combin}, $q'\le q+1$, 
which is a contradiction, because $q'>q+1$.
\end{proof}

Let us pass to the proof of Part II.
It has three main ingredients.
The first one is the inequality~(\ref{inside}).
The second one is Proposition~\ref{mon} below, which follows, for example, 
from the theory of quasinormal families due to 
P. Montel~\cite{Mon}. A family of holomorphic functions
in a domain $D$ is called quasinormal in $D$, if every
sequence of maps of the family contains a subsequence, which
converges locally uniformly in $D$ except for, possibly, 
a finite number of points.
The points where the convergence is not locally 
uniform are called irregular points.
If the number of irregular points is always at most $N$, the family is called
quasinormal of order at most $N$ (if $N=0$, the family is normal).
It is clear from the Maximum Principle, that outside of the
irregular points the sequence converges locally uniformly to infinity. 
Montel~\cite{Mon} proves the following main criterion of quasinormality:
{\it A family of functions which are holomorphic in a domain,
where it takes at most $N_0$ times the value $0$ and at most
$N_1$ times the value $1$ is quasinormal in the domain
of order at most the minimum of $N_0, N_1$.}
As an immediate corollary, we have:
\begin{prop}\label{mon}
Assume that $\{g\}$ is a family of functions which are holomorphic
in the unit disk $B(0,1)$, and such that:
(1) $g(z)=0$ if and only if $z=0$, (2) there exists $Z\not=0$,
so that each $g$ of the family takes the value $Z$ in at most
one point, and (3) there exists $z_0\not=0$, so that the set 
$\{g(z_0)\}$ is bounded. Then the family $\{g\}$ is uniformly bounded
on any compact in the unit disk.
\end{prop}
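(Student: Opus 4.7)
The plan is to derive this directly from Montel's quasinormality criterion quoted just above the proposition. First I would reduce to Montel's statement by rescaling: the family $\{g/Z\}$ takes the value $0$ only at $z=0$ by hypothesis (1) and takes the value $1$ at most once by hypothesis (2), hence takes each of $0$ and $1$ at most once. Applying the criterion with $N_0=N_1=1$ shows that $\{g/Z\}$, and therefore $\{g\}$, is quasinormal in $B(0,1)$ of order at most $1$.

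Next I would unpack the dichotomy the author invokes in the sentence just above the statement: any subsequence extracted from such a quasinormal family converges locally uniformly in $B(0,1)$ either (i) to a holomorphic function on all of $B(0,1)$, or (ii) to $\infty$ on $B(0,1)\setminus E$ for some nonempty irregular set $E$ of at most one point. The Maximum Principle is the essential input here: if a limit were holomorphic on $B(0,1)\setminus\{w\}$ but $w$ were genuinely irregular, uniform convergence on a small circle $\partial B(w,r)$ would give a uniform bound for the sequence on that circle, and the Maximum Principle would propagate this bound to $B(w,r)$, removing the alleged singularity and the irregular point.

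To finish I would argue by contradiction. Suppose the family fails to be uniformly bounded on some compact $K\subset B(0,1)$, and pick $g_n$ together with $\zeta_n\in K$ with $|g_n(\zeta_n)|\to\infty$. Pass to a quasinormal subsequence and rule out both alternatives. Alternative (i) forces $\{g_n\}$ to be uniformly bounded on $K$, contradicting $|g_n(\zeta_n)|\to\infty$. Alternative (ii) forces the irregular set $E$ to contain every point where the subsequence does not diverge; since $g_n(0)=0$ for all $n$ by (1) and $\{g_n(z_0)\}$ is bounded by (3), both $0$ and $z_0$ must lie in $E$, giving $|E|\geq 2$ and contradicting quasinormality of order at most $1$. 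Either way we get a contradiction, so $\{g\}$ is uniformly bounded on every compact subset of $B(0,1)$.

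The step I expect to require the most care is the justification of the dichotomy in the second paragraph, i.e., excluding a holomorphic limit with a genuine singularity at the irregular point; this is exactly where the Maximum Principle enters and where one has to check that the standard form of Montel's quasinormality criterion yields this clean dichotomy for families of \emph{holomorphic} (not merely meromorphic) functions. Once the dichotomy is established, the two anchor points $z=0$ and $z=z_0$ supplied by hypotheses (1) and (3) make the final contradiction essentially automatic.
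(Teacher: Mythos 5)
Your proof is correct and follows exactly the route the paper intends: the paper derives Proposition~\ref{mon} as an ``immediate corollary'' of Montel's quasinormality criterion, and you have simply filled in the details (rescaling by $Z$ to get order at most $1$, the Maximum--Principle dichotomy, and the observation that the anchor points $0$ and $z_0$ would force two irregular points). No gaps.
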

The last ingredient is the following statement, 
which is Lemma I from~\cite{Ne}:
\begin{prop}\label{unival}
Let $\omega(z)=\alpha z+c_2 z^2+...$ be regular and $|\omega(z)|\le |z|$
for $|z|<1$, and let $\omega(z)$ further satisfy $\omega(z)\not=0$ for
$0<|z|<1$; then $\omega(z)$ is univalent inside the circle $|z|=R(\alpha)$
with $R(\alpha)=1+\log(1/|\alpha|)-[(1+\log(1/|\alpha|))^2-1]^{1/2}$.
\end{prop}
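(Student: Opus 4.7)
The plan is to establish starlikeness of $\omega$ on the disk $B(0, R(\alpha))$, which automatically yields univalence. By rotating coordinates (which preserves all the hypotheses) I may take $\alpha>0$; set $L=\log(1/|\alpha|)\ge 0$. When $L=0$ we have $|\alpha|=1$, and Schwarz's lemma forces $\omega(z)\equiv \alpha z$, so I may assume $L>0$. Writing $\omega(z)=zF(z)$, the function $F$ is analytic and non-vanishing on $B(0,1)$ with $F(0)=\alpha$ and $|F(z)|\le 1$, so the branch $h(z)=\log F(z)$ with $h(0)=-L$ is well-defined and satisfies $\mathrm{Re}\,h(z)\le 0$ throughout the disk.

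The key identity $z\omega'(z)/\omega(z)=1+zh'(z)$, together with the classical starlike criterion ($\mathrm{Re}(zf'/f)>0$ on a disk implies $f$ is starlike, hence univalent), reduces the problem to proving $\mathrm{Re}(1+zh'(z))>0$ on $B(0,R(\alpha))$. For this it suffices to show $|zh'(z)|<1$ there. The bound on $|zh'(z)|$ comes from applying the Carath\'eodory/Schwarz-Pick estimates to $p(z):=-h(z)/L$, which has $p(0)=1$ and $\mathrm{Re}\,p\ge 0$. The Schwarz-Pick inequality for the right half-plane gives $|p'(z)|\le 2\,\mathrm{Re}(p(z))/(1-|z|^2)$, while the Carath\'eodory inequality gives $\mathrm{Re}(p(z))\le |p(z)|\le (1+|z|)/(1-|z|)$. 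Combining these yields the clean bound $|p'(z)|\le 2/(1-|z|)^2$, so $|zh'(z)|=L|zp'(z)|\le 2L|z|/(1-|z|)^2$.

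The univalence radius is then determined by solving $2Lr<(1-r)^2$ with $r=|z|$: this is equivalent to $r^2-2(1+L)r+1>0$, whose smaller root is precisely $R(\alpha)=(1+L)-\sqrt{(1+L)^2-1}$. Thus for $|z|<R(\alpha)$ one has $|zh'(z)|<1$, hence $\mathrm{Re}(1+zh'(z))>0$, and $\omega$ is starlike---therefore univalent---on this disk. The only substantive input is the Schwarz-Pick/Carath\'eodory derivative estimate for functions with nonnegative real part, which is classical; the remainder of the argument is algebraic manipulation that matches $R(\alpha)$ to the correct root of the quadratic. I expect no genuine obstacle: the shape of the formula $R(\alpha)$ (a root of $r^2-2(1+L)r+1=0$, so that $R+R^{-1}=2(1+L)$) is a strong hint that the quadratic inequality above is exactly the intended one.
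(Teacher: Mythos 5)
The paper does not prove this proposition at all: it is quoted verbatim as Lemma~I of Nehari~\cite{Ne}, so there is no internal proof to compare against. Your argument is correct and is, as far as I can tell, essentially the classical one. The decomposition $\omega(z)=zF(z)$ with $h=\log F$, $\mathrm{Re}\,h\le 0$, $h(0)=-L$, the identity $z\omega'/\omega=1+zh'$, the Schwarz--Pick bound $|p'(z)|\le 2\,\mathrm{Re}\,p(z)/(1-|z|^2)$ together with the Carath\'eodory/Harnack bound $\mathrm{Re}\,p(z)\le(1+|z|)/(1-|z|)$, and the resulting inequality $|zh'(z)|\le 2L|z|/(1-|z|)^2<1$ on $|z|<R(\alpha)$ (the smaller root of $r^2-2(1+L)r+1=0$) all check out, and the starlikeness criterion legitimately applies because the hypothesis $\omega(z)\ne 0$ for $0<|z|<1$ is exactly what is needed for $z\omega'/\omega$ to be analytic and for $\mathrm{Re}(z\omega'/\omega)>0$ to imply univalence. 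Two degenerate cases are handled or trivially handleable: $L=0$ forces $\omega(z)=\alpha z$ by Schwarz, and if $p$ is constant the same conclusion holds, while otherwise $\mathrm{Re}\,p>0$ throughout the disk so Schwarz--Pick applies in its strict form. In short: a complete and correct proof of an ingredient the paper imports as a black box.
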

Let us turn to the proof of II.
Consider $\Psi=\rho_{p/q}\circ \psi\circ \exp$.
It is holomorphic in $B$ and such that 
$\Psi(2\pi i p/q)=1$ and $\Psi'(2\pi i p/q)=-q^2$.
Define
\begin{equation}\label{normal}
g(w)=\frac{\Psi(rw+2\pi i p/q)-1}{r q^2}.
\end{equation}
Then $g$ is holomorphic in the unit disk, moreover, $g(0)=0$
and $g'(0)=-1$. 
We are going to show that $g$ satisfies the conditions (1)-(3) of
Proposition~\ref{mon} with $Z=-1/(2X)$ and with $z_0=-1/2$.

(1) Assume $g(w)=0$. It means that $\rho_{p/q}(c)=1$, for some $c\in V$.
By the part I, then $c=c_0$, i.e. $w=0$.

(2) Assume $g(w)=-1/(2X)$. 
Then, for some $c_1\in V$, $\rho_{p/q}(c_1)=1-r q^2/(2X)$.
Since $r q^2<X$, the point $\lambda=1-r q^2/(2X)\in (1/2, 1)$,
i.e., it lies in the unit disk. 
Hence, by Part I, such $c_1$ is unique, that is, 
$w$ is the only solution of the equation $g(w)=-1/(2X)$.

(3) Let us fix $w=-1/2$. Now,
apply ~(\ref{inside}) with 
$c=\psi\circ\exp(-r/2+2\pi i p/q)$ and
$\log\rho_W(c)=-r/2+2\pi i p/q$:
$$\frac{|\log(1+r q^2 g(-1/2))|^2}{\log|1+r q^2 g(-1/2))|}
<q^2\frac{|-r/2|^2}{-(-r/2)}=\frac{1}{2} r q^2.$$
Geometrically, it means that the point $1+\delta g(-1/2)$ belongs
to the set $E=\{\exp(z): |z-\delta/4|<\delta/4\}$, where
$\delta=r q^2$. 
If $\delta<\delta_0$,
where $\delta_0$ is a small fixed number, then
$E$ is contained in a small disk around $1$ of radius at most $\delta$.
Hence, $|g(-1/2)|<1$. On the other hand, if $\delta\ge \delta_0$ 
and $\delta<X$, then 
$|g(-1/2)|<(1+\exp(\delta))/\delta\le (1+\exp(X))/\delta_0$.

We have checked that the conditions (1)-(3) hold for
every $g$ as above. Therefore, by Proposition~\ref{mon},
for every $X>0$ there is $C$, such that
$|g(w)|<C$, for $|w|<9/10$. Now we can apply
Proposition~\ref{unival} to the function
$\omega(z)=g(9 z/10)/C$, $|z|<1$. We get that $g$ is univalent
in the disk $|w|<\Lambda:=(9/10)R(9/(10 C))$. It means
that $\Psi$ is univalent in the disk $B(2\pi i p/q, \Lambda r)$.
By the classical distortion bounds for univalent maps,
there exists $\Lambda_0$, which depends on $\Lambda$ only, such that
$2/3<|\Psi'(L)/\Psi'(2\pi i p/q)|<3/2$,
for $L\in B(2\pi i p/q, \Lambda_0 r)$. Since 
$|\Psi'(2\pi i p/q)|=q^2$, this proves the covering property.

To complete the prove of Part II, let us show that
the function $\psi_{W(p/q)}$ has a 
univalent extension to $B(0,1)\cup B(1,2 q^2 \Lambda_0 r/3)$.
Indeed, by what we have just proved, the function 
$\rho_{p/q}^{-1}=\psi_W^{\log}\circ \Psi^{-1}$
is well-defined and univalent in $B(1,2 q^2 \Lambda_0 r/3)$.
In other words, $\psi_{W(p/q)}=\rho_{p/q}^{-1}$ 
has an analytic continuation
from $B(0,1)$ to the domain $B':=B(0,1)\cup B(1,2 q^2 \Lambda_0 r/3)$
(denote this continuation again by $\psi_{W(p/q)}$).
In order to show that $\psi_{W(p/q)}$ is univalent in $B'$,
observe that $\psi_{W(p/q)}(B')\subset W(p/q)\cup V\subset V^{p/q}$,
and, by Part I, $\rho_{p/q}$ is a holomorphic function in $V^{p/q}$.
Thus, $\psi_{W(p/q)}$ in $B'$ has a well-defined inverse function 
$\rho_{p/q}$. The proof of Lemma is completed. 
\begin{com}\label{lambda0}
Obviously, Lemma~\ref{q} remains valid if the constant $\Lambda_0$ 
is replaced by a smaller one.
Using the classical bounds 
$\frac{1-|z|}{(1+|z|)^3}\le |f'(z)|\le \frac{1+|z|}{(1-|z|)^3}$
for any univalent function $f(z)=z+...$ in the unit disk 
(see e.g.~\cite{Gol}), 
it is easy to check that, by the optimality of the bounds,
$\Lambda_0<\Lambda/8$, and 
one can take $\Lambda_0=\Lambda/16$. 
\end{com}

In the next two Lemmas we give some bounds on $r$ from below
in terms of $n$ and $p/q$ to satisfy
the conditions (a) and (b) of the above Lemma~\ref{q}. 
\begin{lem}\label{r}(cf.~\cite{Che})
Let $n\ge 1$ and $p/q\not=0$. 
Given an $n$-hyperbolic component $W$,
consider the corresponding function $\psi_W^{\log}$.
Assume that $\psi_W^{\log}$ extends to a function, which is defined and
univalent on the disk $B=B(2\pi i p/q, 1/(2n q^3))$. Then 
the domain $V=\psi_W^{\log}(B)$ is disjoint with 
any limb $L(W, p'/q')$ other than $L(W, p/q)$ and such that $q'\le q+1$. 
\end{lem}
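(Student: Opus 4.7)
The plan is to argue by contradiction via the Yoccoz bound~(\ref{yoccircle}). Suppose some $c\in V\cap L(W,p'/q')$ exists with $p'/q'\ne p/q$ and $q'\le q+1$. Writing $c=\psi_W^{\log}(L)$ for some $L\in B=B(2\pi i p/q,1/(2nq^3))$, we have $\rho_W(c)=e^L$. On the other hand, $c$ lies in the wake $W(p'/q')^*\subset W^*$, so~(\ref{yoccircle}) gives a value $L^*\in Y_n(p'/q')$ for the branch of $\log\rho_W(c)$ continuous in $W(p'/q')^*$ and normalized to send $c(W,p'/q')$ to $2\pi i p'/q'$. Since $e^L=e^{L^*}=\rho_W(c)$, we have $L-L^*=2\pi i k$ for some $k\in\Z$.

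Thus the lemma reduces to a purely metric claim: for every integer $k$ and every $p'/q'\in(-1/2,1/2]\setminus\{0,p/q\}$ with $q'\le q+1$, the disks $B$ and $Y_n(p'/q')+2\pi i k$ must be shown disjoint. Using that these two disks have radii $r=1/(2nq^3)$ and $n\log 2/q'$ respectively, ``distance between centers exceeds the sum of radii'' rearranges, after squaring, to
\[
4\pi^2\bigl(p/q-p'/q'-k\bigr)^2 \;\ge\; r^2+\frac{2rn\log 2}{q'}.
\]

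I would verify this by case analysis on $k$. For $k=0$, coprimality yields $|p/q-p'/q'|\ge 1/(qq')\ge 1/(q(q+1))$. For $|k|=1$, the constraints $p/q,p'/q'\in(-1/2,1/2]$ together with $\gcd(p',q')=1$ force $|p/q-p'/q'-k|\ge 1/(2q')$ (attained when, e.g., $p/q=1/2$ and $p'/q'=-1/2+1/(2q')$ with $q'$ odd). For $|k|\ge 2$ one trivially has $|p/q-p'/q'-k|\ge 1$. In all three cases, using $q'\le q+1$ and $r=1/(2nq^3)$, the right-hand side above is bounded by $\log 2/q^3+1/(4n^2q^6)$, while the left-hand side exceeds it by a safe margin (in the tightest sub-case the ratio is essentially $4\pi^2/\log 2$).

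The only delicate point is the branch identification in the first paragraph, but this is immediate from the fact that $\rho_W$ is single-valued and holomorphic on the simply connected wake $W^*$: both $L$ and $L^*$ are then logarithms of the common complex number $\rho_W(c)$, so their difference lies in $2\pi i\Z$. Everything else is elementary planar geometry, and the specific choice $r=1/(2nq^3)$ is calibrated precisely so that the $k=0$ case (with $q'=q+1$ and $|pq'-p'q|=1$) still holds with room to spare.
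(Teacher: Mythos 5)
Your proof is correct and follows essentially the same route as the paper: the paper likewise reduces the statement to showing that the distance from the point $2\pi i p/q$ to the Yoccoz disk $Y_n(p'/q')$ is at least $1/(2nq^3)$, using $|p/q-p'/q'|\ge 1/(qq')$ for $q'\le q+1$. The one place you go beyond the paper's write-up is the explicit treatment of the $2\pi i\Z$ branch ambiguity between the value $L\in B$ and the Yoccoz-normalized branch of $\log\rho_W$ (your cases $|k|=1$ and $|k|\ge 2$), which the paper's proof passes over silently; your case analysis there is sound.
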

\begin{proof}
The image of any limb $L(W,p'/q')$ by the function $\log\rho_W$
is contained in Yoccoz's circle
$Y_n(p'/q')=\{L: |L-(2\pi i p'/q'+\frac{n\log 2}{q'})|<\frac{n\log 2}{q'}\}$.
Let us estimate the distance $d$ between the point $2\pi i p/q$ and the circle
$Y_n(p'/q')$, where $p'/q'\not=p/q$ and $q'\le q+1$. 
Then $|p'/q'-p/q|\ge 1/(q q')$ and, hence,
$$d=[(2\pi(\frac{p'}{q'}-\frac{p}{q}))^2+(\frac{n\log 2}{q'})^2]^{1/2}-
\frac{n\log 2}{q'}\ge \frac{1}{q'}\{[\frac{4\pi^2}{q^2}+(n\log 2)^2]^{1/2}
-n\log 2\}\ge$$
$$\frac{4\pi^2}{q^2(q+1)}\frac{1}{[\frac{4\pi^2}{q^2}+(n\log 2)^2]^{1/2}
+n\log 2}\ge \frac{1}{2n q^3},$$
for all $n\ge 1$ and $q\ge 2$.
\end{proof}
\begin{lem}\label{rb}
Let $n\ge 1$ and $p/q\not=0$. 
Set 
$$\hat r=\min\{\frac{1}{n}(\frac{p}{q})^2, \ 
\frac{1}{2^{n/2}}|\frac{p}{q}|\}.$$
Given an $n$-hyperbolic component $W$,
consider the corresponding function $\psi_W^{\log}$.
Assume that $\psi_W^{\log}$ extends to a function, which is defined
and univalent on the disk $B=B(2\pi i p/q, \hat r)$. Then 
the domain $V=\psi_W^{\log}(B)$ is contained in the wake $W^*$.
In particular, the condition (b) of Lemma~\ref{q} holds.
\end{lem}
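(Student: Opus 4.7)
The plan is as follows. Set $\tilde\psi$ equal to the given univalent extension of $\psi_W^{\log}$ to $B=B(2\pi ip/q,\hat r)$, and put $V=\tilde\psi(B)$. I argue $V\subset W^*$ by contradiction.

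Define $B_0\subset B$ to be the connected component of $\tilde\psi^{-1}(W^*)$ containing $2\pi ip/q$. Because $\tilde\psi$ coincides with the original $\psi_W^{\log}$ on $B\cap\{\mathrm{Re}\,L<0\}$ (whose image lies in $W\subset W^*$), the set $B_0$ is nonempty and open, and the identity $\rho_W\circ\tilde\psi=\exp$ propagates by analytic continuation from a neighborhood of $2\pi ip/q$ to all of $B_0$. If $B_0=B$ we are done; otherwise pick $L_0\in(\partial B_0)\cap B$. Continuity of $\tilde\psi$ forces $\tilde\psi(L_0)\in\overline{W^*}$, and a value in the open set $W^*$ would place $L_0$ in the interior of $\tilde\psi^{-1}(W^*)$, contradicting $L_0\in\partial B_0$. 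Hence $\tilde\psi(L_0)\in\partial W^*=\{c_W\}\cup R_+\cup R_-$, where $R_\pm$ are the parameter rays of arguments $\ell_\pm(W)$ landing at the root $c_W$ of $W$.

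If $\tilde\psi(L_0)=c_W$, then passing to the limit in $e^L=\rho_W(\tilde\psi(L))$ from within $B_0$ gives $e^{L_0}=\rho_W(c_W)=1$, hence $L_0=2\pi i k$ for some $k\in\Z$. Since $|p/q|\le 1/2$, the closest such point to $2\pi ip/q$ is $L=0$, at distance $2\pi|p/q|$, which exceeds $\hat r\le 2^{-n/2}|p/q|<2\pi|p/q|$; contradiction. If instead $\tilde\psi(L_0)\in R_\pm$, then $L_0$ lies on the curve $\Gamma_\pm$ obtained as the one-sided limit of $\log\rho_W$ along $R_\pm$ from within $W^*$. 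Each $\Gamma_\pm$ issues from $L=0$ and runs to infinity in the right half-plane, and the desired contradiction reduces to the inequality
\[
\dist\bigl(2\pi ip/q,\ \Gamma_+\cup\Gamma_-\bigr)\ \ge\ \hat r.
\]
The two terms defining $\hat r$ correspond to the two parts of $\Gamma_\pm$: the portion near $L=0$ is controlled by the local cusp expansion $|\psi_W(\rho)-c_W|\sim n^{-1}|\rho-1|^2$ at the root $\rho=1$ supplied by the Douady--Hubbard--Sullivan parameterisation of $W$, which translates into an $(p/q)^2/n$ separation between $2\pi ip/q$ and $\Gamma_\pm$ near the origin; the remote portion of $\Gamma_\pm$ is controlled by the fact that the angles $\ell_\pm(W)$ have $\sigma$-period $n$, together with standard multiplier-growth estimates along external rays, producing the separation of order $2^{-n/2}|p/q|$ in analogy with the Yoccoz-circle calculation of Lemma~\ref{r}.

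The hardest step is the estimate in the second case, $\dist(2\pi ip/q,\Gamma_+\cup\Gamma_-)\ge\hat r$ with the correct $n$-dependence. The near-origin piece should follow cleanly from the local expansion of $\psi_W$ at the root. The more delicate piece is the remote regime, where I expect the factor $2^{-n/2}$ to arise from combining the period-$n$ structure of the bounding external angles $\ell_\pm(W)$ with a quantitative description of how $\rho_W$ extends along $R_\pm$, in the spirit of the proof of Lemma~\ref{r}. Once $V\subset W^*$ is established, the ``in particular'' statement $V\cap(M\setminus W^*)=\emptyset$ is immediate.
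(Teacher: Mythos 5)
Your reduction is sound and matches the paper's strategy in outline: a connected $V$ containing points of $W$ can only escape the wake by meeting $\partial W^*=\{c_W\}\cup R_+\cup R_-$, the root case is excluded because $2\pi|p/q|>\hat r$, and the whole lemma then hinges on showing that the log-multiplier image of the two bounding parameter rays stays at distance at least $\hat r$ from $2\pi ip/q$. But that last estimate is precisely the content of the lemma, and you do not prove it — you only assert that it should follow ``in analogy with'' Lemma~\ref{r} and from ``standard multiplier-growth estimates along external rays.'' This is a genuine gap. The analogue of the Yoccoz circle used in Lemma~\ref{r} is a statement about limbs, i.e.\ about parameters with connected Julia set; here $c_1=\tilde\psi(L_0)$ lies on a parameter ray \emph{outside} $M$, so $J_{c_1}$ is disconnected and one needs the generalization to disconnected Julia sets (Theorem~\ref{yoccircledis} together with Theorem~\ref{disbelow}, packaged as Corollary~\ref{zero} in the Appendix). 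The paper's argument is: a point of $O_W(c_1)$ is the landing point of a non-smooth dynamical ray of the period-$n$ argument $\ell_\pm(W)$; hence a branch of $\log\rho_1$ lies in the disk $B(R,R)$ tangent to $i{\bf R}$ at $0$ with $R=n\pi\log 2/\arctan\bigl(n\pi/((2^n-1)\log|\rho_1|)\bigr)$, where crucially $\log|\rho_1|<\hat r$ because $L_0\in B$; and then an explicit two-case computation shows $(2\pi p/q)^2>\hat r^2+2R\hat r$, so $B(R,R)$ and $B(2\pi ip/q,\hat r)$ are disjoint. Nothing in your sketch produces the factor $(2^n-1)$ (whence $2^{-n/2}$) that makes this work.

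A secondary inaccuracy: you attribute the first term $\frac1n(p/q)^2$ of $\hat r$ to a ``local cusp expansion $|\psi_W(\rho)-c_W|\sim n^{-1}|\rho-1|^2$'' near the root. No such expansion is established in the paper (and it is not how the bound arises); both terms of $\hat r$ come out of the single disk-disjointness computation above, the term $\frac1n(p/q)^2$ appearing in the regime $n\pi/((2^n-1)\hat r)\ge 1$ where $R<4n\log 2$, and the term $2^{-n/2}|p/q|$ in the complementary regime where $R<2\hat r(2^n-1)\log 2$. So the two regimes are distinguished by the size of $\hat r$ relative to $n\pi/(2^n-1)$, not by ``near'' versus ``remote'' portions of the curves $\Gamma_\pm$.
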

\begin{proof}
Each point of the periodic orbit $O_W(c)$ of period $n$
has an analytic continuation 
from $W$ to the wake $W^*$ and, by continuity, to 
$\overline{W^*}\setminus \{c_W\}$.
(In fact, one can extend it analytically, for example, to
${\bf C}\setminus (\{c>1/4\}\cup (M\setminus W^*))$.)
Assume that $V$ is not contained in $W^*$. Then there is 
$c_1\in V\setminus M$, such that
$c_1$ lies on a parameter ray of argument $t_{c_1}\in \{t_{\pm}(W)\}$.
Consider now the dynamical plane of $f_{c_1}$ and its dynamical rays
(for the definiton of dynamical rays to the disconnected Julia set,
see e.g.~\cite{levprz} and Subsection~\ref{apdis} of Appendix).
Then a point of the periodic orbit $O_W(c_1)$ of $f_{c_1}$ of period $n$
must be the landing point
of a (non-smooth) dynamical ray of $f_{c_1}$ of argument $t_{c_1}$
(see e.g.~\cite{Leyo}).
As $t_{c_1}$ is a periodic point of the doubling
map $\sigma$ of period $n$,
we can employ Corollary~\ref{zero} of Appendix. 
Denote by $\rho_1$ the multiplier of the periodic orbit $O_W(c_1)$.
By the condition of the present Lemma, 
$\log\rho_1$ lies in the disk $B$. In particular,
$\log|\rho_1|<\hat r$.
On the other hand, 
by Corollary~\ref{zero}, we obtain that a branch of $\log\rho_1$ lies
in the circle 
$Y=Y_n(0, \delta(n, (1/n)\log|\rho_1|))$.
Therefore,
the disks $B$ and $Y$ must intersect. Let us show that this is impossible.
Indeed, by~(\ref{diszero})-(\ref{rnrho}) of Corollary~\ref{zero},
and since $\log|\rho_1|<\hat r$, 
then $Y\subset B(R,R)$, where
$$R=\frac{n\pi\log 2}{\arctan\frac{n\pi}{(2^n-1)\hat r}}.$$
We need to check that
$(2\pi p/q)^2+R^2>(\hat r + R)^2$, or $(2\pi p/q)^2>\hat r^2 + 2R\hat r$.
Denote $\beta=n\pi/((2^n-1)\hat r)$.
Consider two cases. If $\beta\ge 1$, then 
$R<4n\log 2$, and 
$$\hat r^2 + 2R\hat r\le
(\frac{1}{n})^2(\frac{p}{q})^4+\frac{1}{n}(\frac{p}{q})^2 
8n\log 2\le (\frac{p}{q})^2(1+8\log 2)<(2\pi \frac{p}{q})^2.$$
If $\beta<1$, then we use that $\arctan \beta>\beta/2$ and, hence,
$$R<\frac{n\pi(\log 2) 2(2^n-1)\hat r}{n\pi}=2\hat r (2^n-1)\log 2.$$
Thus 
$$\hat r^2 + 2R\hat r<\hat r^2 (1+4(2^n-1)\log 2)
\le (\frac{p}{q})^2 \frac{1}{2^n}(1+4(2^n-1)\log 2)<(2\pi \frac{p}{q})^2.$$
\end{proof}

We can now combine Lemmas~\ref{r}-\ref{rb}
with Lemma~\ref{q} 
and get an effective version of Lemma~\ref{q}.
As $1/(2n q^3)\le (1/n)(p/q)^2$, the minimum of radii
introduced in Lemmas~\ref{r}-\ref{rb} is the number $r(n, p/q)$
defined by
\begin{equation}
r(n, p/q)=\min\{\frac{1}{2n q^3}, \ \frac{1}{2^{n/2}}|\frac{p}{q}|\}.
\end{equation}
Put $r=r(n, p/q)$ in Lemma~\ref{q}. 
Since $r q^2\le 1/(2nq)<1$, we can apply Lemma~\ref{q} with $X=1$
and find corresponding $0<\Lambda_0<\Lambda<1$.
Then, by Lemma~\ref{q} (I)-(II), the 
function $\Psi_{W,p/q}$ is well-defined in 
$B(2\pi i p/q, r(n, p/q))$ and univalent in
$B(2\pi i p/q, \Lambda r(n, p/q))$.
Moreover, 
\begin{equation}\label{kuku}
B(1, \frac{2}{3} q^2\Lambda_0 r(n, \frac{p}{q}))\subset 
\Psi_{W,p/q}(B(2\pi i \frac{p}{q}, \Lambda_0 r(n, \frac{p}{q}))\subset
B(1, \frac{3}{2} q^2\Lambda_0 r(n, \frac{p}{q})).
\end{equation}
As $\frac{3q^2\Lambda_0 r(n, p/q)}{2}<\frac{3}{4nq}<1$, a branch of $\log$
is well-defined in $B(1, 3q^2\Lambda_0 r(n, p/q)/2)$, such that it
vanishes at the point $1$. 
Therefore, using this branch of $\log$, the function
\begin{equation}\label{lambda}
\lambda_{W,p/q}:=\log\Psi_{W(p/q)}=
\log\circ \rho_{W(p/q)}\circ \psi_W\circ \exp
\end{equation}
is well-defined and univalent in 
$B(2\pi i p/q, \Lambda_0 r(n, p/q))$
and vanishes only at the point $2\pi i p/q$.
Next, we apply the derivative estimate for $\Psi_{W(p/q)}$, see 
Lemma~\ref{q} (II).
Using~(\ref{kuku}) and that $\Lambda_0<1/8$ (see Comment~\ref{lambda0})
we easily conclude from the definition of $\lambda_{W,p/q}$ 
that $q^2/2<|\lambda_{W, p/q}'|<2 q^2$ in 
$B(2\pi i p/q, \Lambda_0 r(n, p/q))$.
Finally, by the last conclusion of Lemma~\ref{q}(II) the function
$\psi_{W(p/q)}$ has a univalent extension to 
$B(0,1)\cup B(1, 2 q^2 \Lambda_0 r(n, p/q)/3)$. 

We define a constant $Q_0$ by the condition:
for $q>Q_0$ and $0<\epsilon\le 1/(4q)$,
\begin{equation}\label{Q0}
\exp(B(0, \epsilon))\subset B(1, \frac{4}{3}\epsilon).
\end{equation}
We use this with $\epsilon=q^2 \Lambda_0 r(n, p/q)/2<1/(4q)$,
where $q>Q_0$.

We get the part A of the following statement, which will serve as 
an indunction argument in the proof of the Main Lemma~\ref{notlc}.

Recall that $\rho_0=\exp(2\pi i p/q)$ and $c_0=\psi_W(\rho_0)$.
\begin{lem}\label{q^{-3}}
There exists $0<\Lambda_0<1$ as follows.
Assume that
$\psi_W$ has a univalent extension to
$B(0,1)\cup U$, where $U=\exp(B(2\pi i p/q, r(n, p/q)))$.
Then A-B hold.

A. The function $\Psi_{W,p/q}=\rho_{W(p/q)}\circ \psi_W\circ \exp$
is holomorphic in $B(2\pi i p/q, r(n, p/q))$, and 
it is equal to $1$ only at the center $2\pi i p/q$.
Furthermore, the function $\lambda_{W,p/q}$ introduced above:
$$\lambda_{W,p/q}=\log\circ \rho_{W(p/q)}\circ \psi_W\circ \exp$$
is well-defined and univalent in $B(2\pi i p/q, \Lambda_0 r(n, p/q))$, and 
it is equal to zero only at the center $2\pi i p/q$. 
For every $L\in B(2\pi i p/q, \Lambda_0 r(n, p/q))$, we have:
$\frac{1}{2}q^2<|\lambda_{W, p/q}'(L)|<2 q^2$.
In particular, 
the inverse function $\lambda_{W, p/q}^{-1}$
is defined and univalent in $B(0,\Lambda_0 q^2 r(n, p/q)/2)$, and
$$\frac{1}{2q^2}<|(\lambda_{W,p/q}^{-1})'(L)|<\frac{2}{q^2}$$
there.
Finally, if $q>Q_0$, the function $\psi_{W(p/q)}$ has a univalent extension
to $B(0,1)\cup \hat U$, where 
$\hat U=\exp(B(0, \Lambda_0 q^2 r(n, p/q)/2))$.

B.  
For each $k=1,...n$, there exists a function $F_k$, which
is defined and holomorphic in the disk 
$$S=\{|s|<(\frac{1}{2} r(n, p/q))^{1/q}\},$$ 
such that $F_k(0)=0$, $F_k'(0)\not=0$, and the following holds.
For every $s\in S$, $s\not=0$, 
and corresponding $\rho=\rho_0+s^q$, the points 
$b_k(c_0)+F_k(s\exp(2\pi i \frac{j}{q}))$, $k=1,...,n, j=0,...,q-1$,
form the periodic orbit $O^{p/q}(c)$ of $f_c$ of period $nq$, 
where $c=\psi_W(\rho)$. The multiplier of $O^{p/q}(c)$ is equal to
$\rho_{W(p/q)}(c)$.
\end{lem}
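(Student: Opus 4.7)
The plan is to verify the hypotheses of Lemma~\ref{q} with the explicit radius $r = r(n, p/q)$ and the constant $X = 1$, and then translate its conclusions through the logarithm to obtain the univalent function $\lambda_{W,p/q}$ with the quantitative derivative bounds stated in Part~A; Part~B is then a quantitative rereading of the local bifurcation picture recalled in Section~\ref{bif}.

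First, I would check the two disjointness conditions (a)--(b) of Lemma~\ref{q}. Since $r(n, p/q) \leq 1/(2 n q^3)$, Lemma~\ref{r} forces $V = \psi_W^{\log}(B(2\pi i p/q, r(n, p/q)))$ to be disjoint from every other limb $L(W, p'/q')$ with $q' \leq q+1$; since $r(n, p/q) \leq \hat r$, Lemma~\ref{rb} yields $V \subset W^*$, which simultaneously gives $V \cap (M \setminus W^*) = \emptyset$ and disjointness from $R_0$. The estimate $r q^2 \leq 1/(2nq) < 1$ makes Lemma~\ref{q} applicable with $X = 1$, producing universal constants $\Lambda_0 < \Lambda$ (shrunk further if needed as in Comment~\ref{lambda0}).

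Next, Lemma~\ref{q}(I) gives the analytic extension of $\rho_{W(p/q)}$ and the uniqueness statement embedded in Part~A, while (II) gives that $\Psi_{W,p/q}$ is univalent on $B(2\pi i p/q, \Lambda r(n, p/q))$ with image controlled as in~(\ref{kuku}). To define $\lambda_{W,p/q}$ I would observe that $\Psi_{W,p/q}(B(2\pi i p/q, \Lambda_0 r(n, p/q)))$ lies in $B(1, 3 q^2 \Lambda_0 r(n, p/q)/2)$ and that the latter radius is at most $3/(4nq) < 1$, so the principal branch of $\log$ is well-defined there, giving a univalent $\lambda_{W,p/q}$ vanishing only at $2\pi i p/q$. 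The derivative identity $\lambda'_{W,p/q} = \Psi'_{W,p/q}/\Psi_{W,p/q}$ combined with the sandwich from Lemma~\ref{q}(II) and $|\Psi_{W,p/q} - 1| \ll 1$ then yields $q^2/2 < |\lambda'_{W,p/q}| < 2 q^2$; the bounds on $|(\lambda_{W,p/q}^{-1})'|$ are reciprocal via the inverse function theorem, and the univalent extension of $\psi_{W(p/q)}$ to $B(0,1) \cup \hat U$ for $q > Q_0$ follows by composing the extension already provided by Lemma~\ref{q}(II) with the inclusion $\exp(B(0, \Lambda_0 q^2 r(n, p/q)/2)) \subset B(1, 2 q^2 \Lambda_0 r(n, p/q)/3)$ granted by~(\ref{Q0}).

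For Part~B, the only point to check is that $\psi_W$ is univalent on $B(0,1) \cup B(\rho_0, r(n, p/q)/2)$, which reduces to the inclusion $B(\rho_0, r(n, p/q)/2) \subset \exp(B(2\pi i p/q, r(n, p/q)))$. Since $r(n, p/q) \leq 1/(2 n q^3) \leq 1/2$, this follows from an elementary local estimate on $\exp$ near $2\pi i p/q$, and the general bifurcation construction from Section~\ref{bif} then produces the functions $F_k$ on the disk $S = \{|s| < (r(n, p/q)/2)^{1/q}\}$ with the stated properties. I expect the most delicate bookkeeping to be the sharp derivative bound in Part~A: passing from the ratio inequality $2/3 < |\Psi'/\Psi'(2\pi i p/q)| < 3/2$ to the clean inequalities $q^2/2 < |\lambda'_{W,p/q}| < 2 q^2$ on the full disk requires controlling both the distortion of $\log$ on $\Psi$'s image and the ratio $\Psi'/\Psi$, possibly forcing a further (universal) shrinking of $\Lambda_0$; everything else is routine once Lemmas~\ref{q}, \ref{r}, and~\ref{rb} are in hand.
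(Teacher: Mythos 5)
Your Part~A follows the paper's own route essentially step for step: verify (a)--(b) of Lemma~\ref{q} via Lemmas~\ref{r}--\ref{rb}, apply Lemma~\ref{q} with $X=1$ using $rq^2\le 1/(2nq)<1$, pass to $\lambda_{W,p/q}$ through the branch of $\log$ on $B(1,3q^2\Lambda_0 r/2)$, and obtain the extension of $\psi_{W(p/q)}$ from the last clause of Lemma~\ref{q}(II) together with~(\ref{Q0}). That part is fine (the paper likewise absorbs the distortion of $\log$ and the factor $\Psi'/\Psi$ into the choice $\Lambda_0<\Lambda/8$ of Comment~\ref{lambda0}).

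Part~B, however, has a genuine gap. The bifurcation construction of Section~\ref{bif} only produces the functions $F_k$ \emph{locally}, on a disk $B(0,\delta)$ with $\delta$ small: the $F_k$ parametrize the points of the bifurcated orbit $O^{p/q}(c)$, and a priori they can develop branch points wherever that period-$nq$ orbit collides with another periodic orbit, i.e.\ wherever its multiplier becomes $1$. Checking that $\psi_W$ is univalent on $B(0,1)\cup B(\rho_0, r(n,p/q)/2)$ (your inclusion $B(\rho_0,r/2)\subset\exp(B(2\pi i p/q,r))$ is correct, but beside the main point) does not by itself propagate the $F_k$ to the whole disk $S=\{|s|<(r(n,p/q)/2)^{1/q}\}$. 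The missing idea is the continuation argument: if some $F_k$ had a singularity at $s_1\in S$, then at $c_1=\psi_W(\rho_0+s_1^q)\neq c_0$ the multiplier $\rho_{W(p/q)}(c_1)$ would equal $1$, with $\log(\rho_0+s_1^q)\in B(2\pi i p/q, r(n,p/q))$; but Part~A states that $\Psi_{W,p/q}$ takes the value $1$ in that disk only at the center $2\pi i p/q$, a contradiction. This use of Part~A is exactly what makes Part~B true on the full disk $S$, and it is absent from your proposal.
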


\

It remains to prove the part B.
It holds locally, in a neighborhood of the point $s=0$
(see the discussion in the beginning of Section~\ref{bif},
where the functions $F_k$ were introduced).
Let us show that the functions $F_k$ don't have singularities
in the disk $S$. 
Indeed, otherwise being continued along a curve in $S$, which starts
at $s=0$ and ends at some $s_1\not=0$,
the multiplier $\rho_{W(p/q)}$
of $O^{p/q}$ becomes equal to $1$, for some $c_1\not=c_0$.
Here $c_1=\psi_W(\rho_1)$, where $|\rho_1-\rho_0|=|s_1|^q<r(n, p/q)/2$. 
Besides, $\rho_1\not=\rho_0$.
It is easy to check that then a branch of $\log\rho_1$
is contained in $B(2\pi i p/q, r(n, p/q))$, 
and $\log\rho_1\not=\log\rho_0=2\pi i p/q$.
On the other hand, by the part A, 
$\rho_{W(p/q)}\circ \psi_W\circ \exp$ 
takes the value $1$ in $B(2\pi i p/q, r(n, p/q))$
only at the point $2\pi i p/q$,
a contradiction.

\subsection{The function $H$}\label{H}
\begin{defi}\label{HH} Define
a real strictly increasing
smooth function 
$$H: [0,1)\to [0, \infty), \ \ H(0)=0$$ 
as follows. Let $G$ be the set of all
holomorphic functions $g: B(0,1)\to {\bf C}\setminus \{1\}$, 
such that $g(w)=0$ if and only if $w=0$.
Then
$$H(u)=\sup\{ |g(w)|: |w|\le u, g\in G\}.$$ 
\end{defi}

There is an explicit expression for $H$.
It is obtained as follows.
Let $J(w)$ be a holomorphic function in $B(0,1)$, such that
$J(0)=0$, $J'(0)>0$, and 
$$J: B(0,1)\setminus \{0\}\to {\bf C}\setminus \{0,1\}$$
is an infinite unbranched cover. Such function is investigated in
~\cite{Ne}, see also ~\cite{Er},~\cite{Hur},~\cite{Nebook}.
By the Schwarz lemma, 
$H(u)=\max_{|w|=u} |J(w)|$.
On the other hand, by~\cite{Ne},
$J(w)=16w \Pi_{k=1}^\infty \frac{(1+w^{2k})^8}{(1+w^{2k-1})^8}$.
(Apparently, $J$ is equal to the square of the so-called elliptic modulus,
see e.g.\cite{Nebook}.)
Thus,
$H(u)=-J(-u)=16u \Pi_{k=1}^\infty \frac{(1+u^{2k})^8}{(1-u^{2k-1})^8}$.
Using a bound for $|J(w)|$ proved in~\cite{Ne}, we get:
\begin{equation}\label{bh}
H(u)\le \frac{1}{16}\exp{(-\pi^2/\log u)}.
\end{equation}
\subsection{Main Lemma}
Theorem~\ref{intronlc} will be a consequence of
Lemma~\ref{notlc} below and a renormalization argument.

It is easy to check the existence of 
$Q_1$, such that, for $q>Q_1$ and $|p/q|\le 1/2$,
$$\exp(B(2\pi i \frac{p}{q}, \frac{1}{2q^3}))\subset \{\rho: Re(\rho)<1\}.$$
Recall that $r(n, p/q)=\min\{1/(2n q^3), |p/q| 2^{-n/2}\}$,
and the constant $Q_0$ is defined by
the condition~(\ref{Q0}) before Lemma~\ref{q^{-3}} .
\begin{lem}\label{notlc}
Let $\Lambda_0$ be the constant 
from Lemma~\ref{q^{-3}}.
Set 
$$\alpha=\frac{\Lambda_0}{4}, \ \ \ 
Q=\max \{Q_0, Q_1\}.$$
Let $t_0, t_1,...,t_m$,... be a sequence
of rational numbers $t_m=p_{m}/q_{m}\in (-1/2, 1/2]\setminus \{0\}$.
Let $W^0$ be the main cardioid.
Denote $W^{m}=W^{m-1}(t_{m-1})$, $m=1,2,...$,
in other words, the closure of the
hyperbolic component $W^m$ touches the closure of the hyperbolic
component $W^{m-1}$ at the point 
$c_{m-1}:=c(W^{m-1}, t_{m-1})$
with the internal argument
$t_{m-1}$. 
Denote 
$$n_0=1, \ \ \ n_m=q_0q_1...q_{m-1}, \ m>0,$$ 
i.e.,
$n_m$ is the period of the attracting periodic orbit
of $f_c$, for $c\in W^m$.

(C1) Assume that, for $m\ge 0$, we have: $q_m>Q$ and
\begin{equation}\label{Yrel1}
|t_0|<\frac{\alpha}{8\pi}, \ \ 
|t_m|<\frac{\alpha}{4\pi}q_{m-1}^2 r(n_{m-1}, \frac{p_{m-1}}{q_{m-1}})=
\frac{\alpha}{4\pi}\min\{\frac{1}{2 n_m}, \frac{|p_{m-1}q_{m-1}|}
{2^{n_{m-1}/2}}\}, m\ge 1.
\end{equation}
Then the sequence $c_m$ converges to some $c_\ast\in \partial M$.  

(C1') If, additionally, 
\begin{equation}\label{Yrel1'}
\limsup_{m\to \infty} \frac{q_m}{\max\{n_{m}^2, n_{m-1}2^{n_{m-1}/2}\}}>
\frac{8}{\alpha},
\end{equation}
then the Mandelbrot set is locally connected at $c_\ast$.

(C2) Assume that the conditions of (C1) hold, and assume also that
\begin{equation}\label{exp1}
|t_0| H(u_0)+\sum_{m=0}^\infty  \frac{u_{m}}{q_m(1-u_{m})}H(u_{m+1})
<\frac{1}{100}, 
\end{equation}
where 
\begin{equation}\label{Rintro}
u_{m}=|32 t_{m+1}\max\{2n_{m+1}, 
\frac{2^{n_m/2}}{|p_m q_m|}\}|^{1/q_m}, m\ge 0.
\end{equation}
Then the map $f_{c_\ast}$ is infinitely renormalizable
with non locally connected Julia set.
\end{lem}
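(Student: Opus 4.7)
My plan is to iterate Lemma~\ref{q^{-3}} along the sequence $\{W^m\}$, starting from $W^0$ the main cardioid. At level $0$, $\psi_{W^0}(\rho)=\rho(2-\rho)/4$ is explicit and univalent on $\mathbf{C}\setminus\{1\}$, so the hypothesis of Lemma~\ref{q^{-3}} at level $0$ is trivial. Assuming inductively that $\psi_{W^m}$ extends univalently to $B(0,1)\cup \exp(B(2\pi i t_m, r(n_m, p_m/q_m)))$, Lemma~\ref{q^{-3}}(A) provides a univalent extension of $\psi_{W^{m+1}}$ to $B(0,1)\cup \exp(B(0,\Lambda_0 q_m^2 r(n_m, t_m)/2))$. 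Condition (\ref{Yrel1}) with $\alpha=\Lambda_0/4$ places $2\pi i t_{m+1}$ at distance at most $\Lambda_0 q_m^2 r(n_m, t_m)/8$ from $0$, while the trivial bound $r(n_{m+1}, t_{m+1})\le 1/(2n_{m+1})$ together with $q_m>Q$ makes the inner disk around $2\pi i t_{m+1}$ fit inside the remaining annulus, propagating the inductive hypothesis.

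For part (C1), the derivative bound $|(\lambda_{W^m,t_m}^{-1})'(0)|<2/q_m^2$ from Lemma~\ref{q^{-3}}(A) combined with Koebe distortion on the extension disk telescopes to
\[
|c_{m+1}-c_m|\ \le\ \mathrm{const}\cdot\frac{|t_{m+1}|}{n_{m+1}^2}\,,
\]
which by (\ref{Yrel1}) is at most $\mathrm{const}/n_{m+1}^3$ and hence summable; therefore $c_m\to c_\ast\in\partial M$, the last inclusion following from the fact that each $c_m$ is the root of a hyperbolic component and $\partial M$ is closed. For part (C1'), the images $V_m:=\psi_{W^m}^{\log}(\overline{B(2\pi i t_m, r(n_m, t_m))})$ are connected neighborhoods of $c_m$ in the $c$-plane whose diameters are controlled, via Koebe, by $r(n_m, t_m)/n_m^2$; the quantitative condition (\ref{Yrel1'}) is tuned so that along some subsequence this bound dominates $|c_\ast-c_m|$, forcing $c_\ast\in V_m$ on that subsequence and giving a basis of arbitrarily small connected neighborhoods of $c_\ast$ in $M$, i.e.\ MLC at $c_\ast$.

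Part (C2) is the most delicate. At each level $m$, part B of Lemma~\ref{q^{-3}} realizes the attracting period-$n_{m+1}$ orbit of $f_{c_{m+1}}$ as a perturbation $b_k(c_m)+F_k^{(m)}(s_m e^{2\pi i j/q_m})$ of the period-$n_m$ orbit at the bifurcation parameter $c_m$, where $s_m$ is determined by $\rho_{W^m}(c_{m+1})=\exp(2\pi i t_m)+s_m^{q_m}$; the covering property in Lemma~\ref{q^{-3}}(A) together with the definition of $r(n_m, t_m)$ then bounds $|s_m|$ in terms of $|t_{m+1}|^{1/q_m}$, producing exactly the quantity $u_m$ of (\ref{Rintro}). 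The key analytic input is that, after an appropriate rescaling, each $F_k^{(m)}$ (or rather the standard associated function built from $\rho_{W^{m+1}}\circ \psi_{W^m}(\rho_0+s^{q_m})$) falls into the family $G$ from Definition~\ref{HH}: the unique-zero condition at $s=0$ is furnished by the proof of Lemma~\ref{q^{-3}}(B), while the omitted value $1$ comes from Lemma~\ref{q^{-3}}(A)(I), where $\rho_{W^{m+1}}$ is shown to equal $1$ only at the bifurcation point inside the domain in question. Hence $|F_k^{(m)}(s_m)|\le H(u_m)$. Propagating the displacements through the cascade telescopes into precisely the series of (\ref{exp1}) with the factors $u_m/(q_m(1-u_m))$ arising from the interaction between successive levels, and the hypothesis that the sum is below $1/100$ keeps every cascade periodic point uniformly away from the critical point $0$. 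By the Douady--Hubbard non-shrinking argument, the small Julia sets of the renormalizations of $f_{c_\ast}$ then do not shrink and $J_{c_\ast}$ fails to be locally connected at $0$; infinite renormalizability itself follows from the nested wake structure together with $c_\ast\in\partial M$. The hardest step is pinning down the exact normalization so that the associated function sits in $G$ with argument precisely the $u_m$ of (\ref{Rintro}), and then verifying that the cumulative displacement really obeys the geometric-series bound with coefficient $u_m/(q_m(1-u_m))$.
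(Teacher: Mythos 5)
Your plan follows the paper's own proof almost step for step: the inductive propagation of the univalent extension of $\psi_{W^m}$ via Lemma~\ref{q^{-3}}, the derivative/Koebe control giving convergence of $c_m$, the Yoccoz-circle containment for (C1'), and for (C2) the local parameter $s_m$, the ratio function lying in the family $G$ of Definition~\ref{HH}, and the telescoped displacement series. So the approach is the right one and essentially identical to the paper's.

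Two places in your sketch gloss over ingredients that the paper has to supply explicitly. First, for (C1') it is not enough that $c_\ast\in V_m$ with $\mathrm{diam}\,V_m$ small: $V_m\cap M$ need not be connected, so you do not directly get a basis of connected neighborhoods of $c_\ast$ in $M$. The paper instead shows that the Yoccoz circle $Y_{n_m}(t_m)$ fits inside $B_m$ under~(\ref{Yrel1'}), hence the \emph{limb} $L(W^m,t_m)$ is contained in $R_m=\psi_m(B_m)$; the closed limbs are nested connected compacta containing $c_\ast$ and form the required neighborhood basis. Second, the coefficient $u_m/(q_m(1-u_m))$ in the series comes from a Schwarz--Pick-type estimate on $|Z(s e_{q_{m-1}})-Z(s)|$ for the orbit-point function $Z$ on the disk $S_{m-1}$, and this requires an a priori uniform bound on $|Z(s)|$ (the paper's Claim 2, $|Z(s)|<3$), which in turn needs the Green's-function bound of Theorem~\ref{disbelow} to control $|c_{m-1}(s)|$ for parameters outside $M$. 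Without that boundedness the factor $1/q_{m-1}$ in front of $u_{m-1}/(1-u_{m-1})$ has no justification. Finally, to conclude that the orbits avoid a fixed neighborhood of $0$ you also need the lower bound $|Z_0^*|>1/4$ on the starting fixed point, which is what makes the threshold $1/100$ in~(\ref{exp1}) effective.
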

\begin{proof}
Introduce some notations. Let
$$r_m=r(n_m, \frac{p_m}{q_m})=
\min\{\frac{1}{2n_m q_m^3}, \ \frac{1}{2^{n_m/2}}|\frac{p_m}{q_m}|\}, 
\ m=0,1,...$$
$$B_0:=B(0, \alpha), \ \ 
B_m:=B(0, \alpha q_{m-1}^2 r_{m-1}),\  m=1, 2,...,$$ 
$$\tilde B_m:=B(2\pi i p_m/q_m, r_m), \ \ \ U_m=\exp(\tilde B_m), 
\ m=0,1,....$$ 
As usual, $\psi_{W^m}$ is the function,
which is inverse to the multiplier function $\rho_{W^m}$. 
Using notations of Lemma~\ref{q^{-3}}, denote
$$\psi_m=\psi_{W^m}^{\log}=\psi_{W^{m}}\circ \exp, \  \ \
\lambda_m=\lambda_{W^m, t_m}=
\log\circ \rho_{W^{m+1}}\circ \psi_{W^m}\circ \exp.$$
For $m\ge 0$, 
$\lambda_m$ is defined and holomorphic
in a neighborhood of the point $2\pi i t_m$, and a branch of the
log is chosen so that $\lambda_m(2\pi i t_m)=0$.
Note that $c_m=\psi_m(2\pi i t_m)$.
We have explicitely $\psi_{W^0}(\rho)=\rho/2-(\rho/2)^2$, so that
$\psi_{W^0}$ is holomorphic in the plane and is univalent in 
any domain, which does not contain two points $\rho_1\not=\rho_2$ with
$\rho_1+\rho_2=2$, particularly, it is univalent in the half-plane
$\{Re(\rho)<1\}$. By the choice of $Q_1$ and $Q$, $\psi_{W^0}$
is univalent in $B(0,1)\cup U_0$. This will allow us to start
applying Lemma~\ref{q^{-3}}.
Now, let us verify that, for $m\ge 0$,
\begin{equation}\label{disks}
\tilde B_m\subset B_m\setminus{0}.
\end{equation}
Indeed, 
$$\frac{2\pi |p_m/q_m|}{r_m}\ge\frac{2\pi |p_m/q_m|}{1/(2n_m q_m^3)}\ge 
16\pi>1.$$
Hence, if $\hat r_m$ denotes the radius of $B_m$ and using
the condition (\ref{Yrel1}), 
$$2\pi\frac{|p_m|}{q_m}+r_m<4\pi\frac{|p_m|}{q_m}<\hat r_m.$$
This proves~(\ref{disks}).

Note that $\psi_0=\psi_{W^0}\circ \exp$
is holomorphic in ${\bf C}$. Set $C=\max\{|(\psi_0)'(L)|: L\in B_0\}$.

{\bf Claim 1.}
{\it For $m=1,2,3,...$, the following holds.

(i) The function $\psi_m$ extends to a
univalent map defined in $B_m$. Moreover,
the univalent function $\psi_{W^m}: B(0,1)\to W^m$ has a univalent extension 
to $B(0,1)\cup \exp(B_m)$.

(ii)  For $L\in B_m$, 
$$|\psi_m'(L)|\le C\frac{2^{m}}{n_m^2}.$$

(iii) For any $k\ge 0$, denote 
$$R_k=\psi_k(B_k),$$
(so that $c_k\in R_k$). Then
$R_{m}\subset R_{m-1}$, and the diameter of the set
$R_m$ is less than $2C\alpha 2^{m}/n_m^3$. In particular,
$\{R_m\}$ shrink to a point $c_\ast$, which is the limit
of the sequence $c_m$.

(iv) If, for some $m>0$, 
\begin{equation}\label{iv}
\frac{q_m}{\max\{n_{m}^2, n_{m-1}2^{n_{m-1}/2}\}}>
\frac{8}{\alpha},
\end{equation}
then the limb $L(W^m, t_m)$ is contained in $R_m$.}

We prove (i)-(iii) of the Claim 1 by induction in $m=1,2,3,...$.
The proof is an almost straighforward application of Lemma~\ref{q^{-3}}.
We have:
\begin{equation}\label{svyaz}
\psi_{m+1}=\psi_{m}\circ \lambda_{m}^{-1}, \ m\ge 0,
\end{equation}
whenever the right hand-side is defined.

Let us prove (i)-(iii) for $m=1$.
As we checked before the Claim 1, $\psi_{W^0}$ has a univalent extension
to $B(0,1)\cup U_0$, that is, the conditions of Lemma~\ref{q^{-3}} hold,
for $W=W^0$, and $p_0, q_0$ instead of $p, q$.
By Lemma~\ref{q^{-3}} (A), the inverse $\lambda_0^{-1}$
is defined and univalent in $B_1=B(0,(\Lambda_0/4)q_0^2 r_0)$, 
and $\lambda_0^{-1}(B_1)$ is contained
in $\tilde B_0$. 
Also, for
$L\in B_{1}$,
$\frac{1}{2q_0^2}<|(\lambda_0^{-1})'(L)|<\frac{2}{q_0^2}$.
By~(\ref{svyaz}), $\psi_{1}$ extends to a univalent map defined in
$B_{1}$, and, by the Chain Rule,
for $L\in B_{1}$,  
$$|\psi_{1}'(L)|\le C\frac{2}{q_0^2}=
C\frac{2}{n_{1}^2}.$$
The property (iii) follows from (i)-(ii):
$R_{1}=\psi_1(B_{1})=\psi_{0}\circ \lambda_{0}^{-1}(B_{1})
\subset \psi_0(B_0)=R_0$. Finally, by the last conclusion
of Lemma~\ref{q^{-3}} (A), $\psi_{W^1}$ has a univalent extension to 
$B(0,1)\cup \exp(B_1)$. 

Step of induction. This is an obvious modification
of the argument for $m=1$. So, assume (i)-(ii)
hold for $m\ge 1$. By (\ref{disks}) and (i), 
the conditions of Lemma~\ref{q^{-3}} hold,
for $W=W^m$ and $p_m, q_m$ instead of $p, q$.
Therefore, the inverse $\lambda_m^{-1}$
is defined and univalent in $B_{m+1}=B(0,\alpha q_{m}^2 r_{m})$, 
also 
\begin{equation}\label{recent1}
\lambda_m^{-1}(B_{m+1})\subset \tilde B_m\subset B_m,
\end{equation} 
and, for
$L\in B_{m+1}$,
\begin{equation}\label{der}
\frac{1}{2q_m^2}<|(\lambda_m^{-1})'(L)|<\frac{2}{q_m^2}.
\end{equation}
By~(\ref{svyaz}), $\psi_{m+1}$ extends to a univalent map defined in
$B_{m+1}$, and, by the Chain Rule and the induction assumption,
for $L\in B_{m+1}$,  
$$|\psi_{m+1}'(L)|\le C\frac{2^{m}}{n_m^2}\frac{2}{q_m^2}=
C\frac{2^{m+1}}{n_{m+1}^2}.$$
In turn,
$R_{m+1}=\psi_{m+1}(B_{m+1})=\psi_{m}\circ \lambda_{m}^{-1}(B_{m+1})
\subset \psi_m(B_m)=R_m$, i.e.
(iii) holds as well, for $m+1$.
Finally, by the last conclusion 
of Lemma~\ref{q^{-3}} (A), $\psi_{W^{m+1}}$ has a univalent extension to 
$B(0,1)\cup \exp(B_{m+1})$. 

The induction is completed.
Let us prove (iv). Assuming the condition of (iv) holds,
let us check that 
Yoccoz's circle $Y_{n_m}(t_m)$ is contained in $B_m$.
Indeed, $Y_{n_m}(t_m)$ is contained in the disk centered at $2\pi i p_m/q_m$
of radius $2n_m\log 2/q_m$, while, by~(\ref{iv}),
$$\frac{2n_m\log 2}{q_m}<
\frac{2\alpha n_m\log 2}{8\max\{n_{m}^2, n_{m-1}2^{n_{m-1}/2}\}}
\le 
\frac{\alpha\log 2}{2}\min\{\frac{1}{2n_m}, \frac{q_{m-1}}{2^{n_{m-1}/2}}\}\le
\frac{\alpha\log 2}{2}q_{m-1}^2 r_{m-1}.$$
Then, using~(\ref{Yrel1}),
$$2\pi\frac{|p_m|}{q_m}+\frac{2n_m\log 2}{q_m}<[\frac{\alpha}{2}+
\frac{\alpha\log 2}{2}]
q_{m-1}^2 r_{m-1}<
\alpha q_{m-1}^2 r_{m-1},$$
which is the radius of $B_m$. Thus  $Y_{n_m}(t_m)\subset B_m$,
hence, $L(W^m, t_m)\subset \psi_m(Y_{n_m}(t_m))\subset 
\psi_m(B_m)=R_m$. 
This finishes the proof of the Claim 1.
It yields immediately the statements (C1)-(C1') of the Lemma.

Now we pass to the proof of the statement (C2).
Let us denote by $O_k(c)$ the $n_k$-periodic orbit
of $f_c$, which is attracting if $c\in W^k$, $k=0,1,2,...$.
As we know, $O_k(c)$ extends holomorphically for $c$ in the wake
$(W^k)^*$ of $W^k$.  
Given $m\ge 1$, consider any point $b(c)$ of the periodic orbit
$O_m(c)$, for $c\in (W^{m})^*$.
By~(\ref{disks}) and by Claim 1(i), $\psi_{W^{m-1}}$ has a univalent 
extension to $B(0,1)\cup \exp(\tilde B_{m-1})$
(for $m=1$, this was checked before the Claim 1). 
Thus
the conditions of  
Lemma~\ref{q^{-3}} hold, for $W=W^{m-1}$ and $p/q=t_{m-1}$. 
Then the conclusion (B) of that Lemma tells us that the function
$b$ extends to
a neighborhood of $c_{m-1}$ in the following sense.
There is a holomorphic function $Z$ of a local parameter $s$ in the
disk 
$$S_{m-1}:=\{|s|<v_{m-1}\}, \ \ \ 
v_{m-1}=(\frac{r_{m-1}}{2})^{1/q_{m-1}},$$ 
such that it matches $b(c)$, 
i.e., $b(c)=Z(s)$ for
$$c=c_{m-1}(s):=\psi_{W^{m-1}}(\exp(2\pi i t_{m-1})+s^{q_{m-1}}).$$
Moreover, the points $Z(s e^j_{q_{m-1}})$, where $j=1,...,n_{m-1}-1$
and $e_q=\exp(2\pi i /q)$, also belong to $O_m$.
We denote by $b^+$ the point $Z(s e_{q_{m-1}})$ of $O_m$,
which is uniquely defined for $s\in S_{m-1}$.
Let us estimate the distance between $b=Z(s)$ and $b^+=Z(s e_{q_{m-1}})$.

{\bf Claim 2.} 
{\it $|Z(s)|<3$ for $s\in S_{m-1}$, $m=1,2,...$ .}

{\it Proof of Claim 2.} For $|c|<5$, every point z, such that $|z|\ge 3$,
escapes under the dynamics of $f_c$. As $Z(s)\in J_{c_{m-1}(s)}$, it is then
enough to check that $|c_{m-1}(s)|<5$. Fix $s\in S_{m-1}$ and denote
$\tilde c=c_{m-1}(s)$. If $\tilde c\in M$, then $|\tilde c|\le 2$, so we
assume $\tilde c\notin M$.
We have: $|\rho_{W^{m-1}}(\tilde c)-\exp(2\pi i t_{m-1})|=
|s|^{q_{m-1}}<r_{m-1}/2$,
where $r_{m-1}\le 1/(2 n_{m-1} q_{m-1}^3)\le 1/16$.
Hence, 
$$\log|\rho_{W^{m-1}}(\tilde c)|<\log(1+\frac{r_{m-1}}{2})<
r_{m-1}\le \frac{1}{16}.$$
On the other hand, consider the Riemann map 
$R:\{|w|>1\}\to {\bf C}\setminus M$, where $R(w)=w+\alpha_0+O(1/|w|)$ as 
$w\to \infty$.
If $R(\tilde w)=\tilde c$, then, in the notations of Subsection~\ref{apdis}
of Appendix,
$|\tilde w|=|B_{\tilde c}(\tilde c)|=\exp(2 a_{\tilde c})$.
In turn, by Theorem~\ref{disbelow} of Subsection~\ref{apdis}, 
$$a_{\tilde c}\le \frac{1}{n_{m-1}}\log|\rho_{W^{m-1}}(\tilde c)|\le 
\frac{1}{16},$$ 
that is,
$|\tilde w|\le \exp(1/8)$. By a general property of 
univalent maps (see e.g.~\cite{Gol}, Ch.II), for every $r\ge 1$, 
the complement of the image
$R(\{|w|>r\})$ belongs to the disk $|c-\alpha_0|\le 2r$.
Setting here $r=1$ and using that $-2, 1/4\in M$ and $\alpha_0\in {\bf R}$,
we get $-2<\alpha_0\le 0$ (in fact, it is well-known that $\alpha_0=-1/2$).
Then, setting $r=\exp(1/8)$, we get finally that $|\tilde c|<2+2\exp(1/8)<5$.
Claim 2 is proved.

Thus $|Z(s)|<3$ for $s\in S_{m-1}$. Therefore,
\begin{equation}\label{+}
|Z(s e_{q_{m-1}})-Z(s)|<3 \frac{2\pi}{q_{m-1}}
\frac{\frac{|s|}{v_{m-1}}}{1-\frac{|s|^2}{v_{m-1}^2}}.
\end{equation}
Let us detect $s$ for $O_m$, which corresponds to the limit
parameter $c_*$.
Using~(\ref{svyaz})-(\ref{recent1}), we have : 
$$c_*\in \psi_{m+1}(B_{m+1})=\psi_m\circ \lambda_m^{-1}(B_{m+1})\subset
\psi_m(\tilde B_m)=\psi_{m-1}\circ \lambda_{m-1}^{-1}(\tilde B_m).$$
As $\lambda_{m-1}(2\pi i t_{m-1})=0$,
then~(\ref{der}) gives us that 
\begin{equation}\label{recent2}
\lambda_{m-1}^{-1}(\tilde B_m)\subset B(2\pi i t_{m-1}, C),
\end{equation}
where
\begin{equation}\label{new}
C=\frac{2}{q_{m-1}^2}(2\pi |t_m|+r_m)\le
\frac{2}{q_{m-1}^2}(2\pi |t_m|+\frac{1}{2 n_m q_m^3})
<\frac{16 |t_m|}{q_{m-1}^2}.
\end{equation}
Note that, by~(\ref{Yrel1}), 
\begin{equation}\label{nado}
\frac{16 |t_{m}|/q_{m-1}^2}{r_{m-1}/2}<32 
\frac{\alpha}{4\pi}<\frac{2}{\pi}<1.
\end{equation}
Therefore, if 
$s_{m-1}\in S_{m-1}$ is such that $c_*=c_{m-1}(s_{m-1})$, then 
\begin{equation}\label{v}
\frac{|s_{m-1}|}{v_{m-1}}< 
(\frac{16 |t_{m}|/q_{m-1}^2}{r_{m-1}/2})^{1/q_{m-1}}=
(32 |t_m| \max\{2n_{m}, \frac{2^{n_{m-1}/2}}{|p_{m-1}q_{m-1}|}\})^{1/q_{m-1}},
\end{equation}
and is less than $1$.
Now we consider two consequtive periodic orbits
$O_{m}$ and $O_{m+1}$. 
Let $b_{m+1}(c)$ be any point of $O_{m+1}(c)$, for $c\in (W^{m+1})^*$.
By the above, there is a holomorphic function 
$Z_{m+1}$ of the local parameter $s\in S_m$, such that 
$b_{m+1}(c)=Z_{m+1}(s)$, where $c=c_m(s)$
provided $c_m(s)\in (W^{m+1})^*$.
For $s\in S_m$, we have:
$$\rho_{W^{m-1}}(c_m(s))\in \rho_{W^{m-1}}\circ \psi_{W^m}
(B(e^{2\pi i t_m}, r_m/2))\subset 
\rho_{W^{m-1}}\circ \psi_{W^m}\circ \exp (\tilde B_m).$$
Consider the set 
$\Omega:=\rho_{W^{m-1}}\circ \psi_{W^m}\circ \exp (\tilde B_m)$,
in other words, $\Omega=\exp(\lambda_{m-1}^{-1}(\tilde B_m))$.
By~(\ref{recent2})-(\ref{nado}) along with~(\ref{disks}),
$\Omega\subset \exp(B(2\pi i t_{m-1}, (2/\pi)(r_{m-1}/2)))
\setminus \{e^{2\pi i t_{m-1}}\}$. Now we use the definition of 
the constant $Q_0$, see~(\ref{Q0}), with  
$\epsilon=(2/\pi)(r_{m-1}/2)$. As 
$\epsilon<1/(4q_{m-1}^3)\le 1/(4q)$ and
$q_{m-1}>Q_0$, we therefore have that
$$\exp(B(2\pi i t_{m-1}, \frac{2}{\pi}\frac{r_{m-1}}{2}))\subset 
B(\exp(2\pi i t_{m-1}), \frac{4}{3}\epsilon),$$ 
and since  
$4\epsilon/3<r_{m-1}/2$, we conclude that
the simply-connected domain
$\Omega$ is a subset of
$B(e^{2\pi i t_{m-1}},r_{m-1}/2)\setminus 
\{e^{2\pi i t_{m-1}}\}$. 
It allows us to 
fix a branch $p$ of the function
$v\mapsto (v-e^{2\pi i t_{m-1}})^{1/q_{m-1}}$, which is
defined for $v\in \Omega$.
We thus get a well defined
passage map $\hat p: s\mapsto p\circ \rho_{W^{m-1}}(c_m(s))$
from the local parameter $s\in S_m$ of $O_{m+1}(c)$ to the 
corresponding local parameter in $S_{m-1}$ of the points of $O_m(c)$. 
Now, let $\hat Z_{m}(s):=Z_m(\hat p(s))$, $s\in S_m$, 
be a point of $O_{m}$, for which
$\hat Z_{m}(0)=Z_{m+1}(0)$. For  
$b=\hat Z_m(s)$ of $O_m$, there is the corresponding point 
$b^+=\hat Z^+_m(s)$ of $O_m$, i.e.
$\hat Z^+_m(s)=Z_m(\hat p(s)e_{q_{m-1}})$.
Consider the functions $Z_{m+1}$, $\hat Z_{m}$, and 
$\hat Z^+_m$ holomorphic in $S_m$.
Observe that $Z_{m+1}(s)=\hat Z_{m}(s)$ if and only if $s=0$,
and $\hat Z^+_{m}(s)\not= \hat Z_{m}(s)$ in $S_m$.
Introduce a new function 
$$\zeta_m(s)=\frac{Z_{m+1}(s)-\hat Z_{m}(s)}{\hat Z^+_{m}(s)-\hat Z_{m}(s)}.$$
By the above, $\zeta_m(s)$ obeys the following properties:

(i) it is holomorphic in the disc $S_m$, 

(ii) $\zeta_m(s)\not=1$ in $S_m$,

(iii) $\zeta_m(s)=0$ if and only if $s=0$. 

We conclude that 
\begin{equation}\label{yahas}
|\zeta_m(s)|\le H(\frac{|s|}{v_m}),
\end{equation}
where the function $H$ is defined in Sect.~\ref{H}.

As for the limit parameter $s=s_m$, we get
\begin{equation}\label{yahas1}
|Z_{m+1}(s_m)-\hat Z_m(s_m)|\le H(\frac{|s_m|}{v_m}) 
|\hat Z^+_m(s_m)-\hat Z_m(s_m)|.
\end{equation}

In turn, by ~(\ref{+}),
\begin{equation}\label{++}
|\hat Z^+_m(s_m)-\hat Z_m(s_m)|=
|Z_{m}(s_{m-1} e_{q_{m-1}})-Z_{m}(s_{m-1})|
<3 \frac{2\pi}{q_{m-1}}
\frac{\frac{|s_{m-1}|}{v_{m-1}}}{1-\frac{|s_{m-1}|^2}{v_{m-1}^2}}.
\end{equation}

Here $Z_{m+1}^*:=Z_{m+1}(s_m)$ is {\it any} point of the periodic orbit
$O_{m+1}$ of the limit map $f_{c_*}$ while
$Z_{m}^*:=\hat Z_{m}(s_m)$ is
a point of the periodic orbits $O_{m}$ 
of the same map $f_{c_*}$, which is determined by the $Z_{m+1}^*$.
Because $H$ and $t/(1-t^2)$ are increasing functions, we conclude
from ~(\ref{yahas1}),~(\ref{++}), and~(\ref{v}):
\begin{equation}\label{rel}
|Z_{m+1}^*-Z_{m}^*|\le 3 \frac{2\pi}{q_{m-1}}
\frac{u_{m-1}}{1-u_{m-1}}H(u_m) 
\end{equation}
where 
$u_k=(32 |t_{k+1}| \min\{2n_{k+1}, \frac{2^{n_k/2}}{|p_k q_k|}\})^{1/q_{k}}$, 
$k=0,1,...$.

In turn, the point $Z_m^*$ determines a
point $Z_{m-1}^*$ of the periodic orbit $O_{m-1}$, and so on until
a point $Z_0^*$ of $O_0$.
The inequality ~(\ref{rel}) holds for every $m\ge 1$.
It remains to estimate $|Z_1^*-Z_0^*|$.
Since $Z_0^*$ is a fixed point of $f_{c_*}$, we compare
$|Z_1^*-Z_0^*|$ with $|\beta^*-Z_0^*|$, where $\beta^*$ is the second
fixed point of $f_{c_*}$. By similar considerations, we can write
$$|Z_1^*-Z_0^*|\le H(u_0)|\beta^*-Z_0^*|.$$
On the other hand,
$$|\beta^*-Z_0^*|=|1-\rho^*|$$
where $\rho^*$ is the multiplier of $Z_0^*$. 
Hence,
$$|\beta^*-Z_0^*|\le |1-\exp(2\pi i t_0)|+|\exp(2\pi i t_0)-\rho^*|
\le 2\pi |t_0|+1/(2q_0^3)<7 |t_0|.$$
We have finally, for $m=0,1,2,...$,
\begin{equation}\label{abs}
|Z_{m+1}^*-Z_0^*|< 6 \pi\{|t_0| H(u_0)+
\sum_{k=1}^\infty \frac{1}{q_{k-1}}
\frac{u_{k-1}}{1-u_{k-1}}H(u_k)\}.
\end{equation}
Here $Z_{m+1}^*$ is any point of the periodic orbit $O_{m+1}$
of $f_{c_*}$, for any $m\ge 0$. 
On the other hand, since $\rho^*=2 Z_0^*$ and $|\rho^*|>1-1/(2 q_0^3)>1/2$,
then $|Z_0^*|>1/4$.
Hence, under the condition
~(\ref{exp1}),
for every $m\ge 1$, the periodic orbit
$O_m$ of $f_{c_*}$ lies outside of a fixed neighborhood
$B(0, \delta)$ of zero, where $\delta>1/4-6\pi/100>0$.
It is well known that this implies the
non local connectivity of $J_{c_*}$
(see~\cite{Sor} for a detailed proof of this fact). 
\end{proof}
\subsection{Proof of Theorem~\ref{intronlc}}\label{lcpr}
First, let us make a remark on the condition (S) of Theorem~\ref{intronlc}.
We claim that the value $u_{k, m}$ in $(S)$ can be replaced by
$$\tilde u_{k, m}=|A t_{m+1} 
\max\{B q_k...q_m, \ \frac{\exp(\tilde\gamma q_k...q_{m-1})}
{|p_mq_m|}\}|^{1/q_m},$$ 
for any positive $A$, $B$, and $\tilde \gamma$.
Indeed, given $k$, if $k_1>k$ is big enough and $m\ge k_1$, then
$\tilde u_{k_1, m}/u_{k, m}<1$. Therefore, if~(\ref{Srel0}) holds
and since $H$ is increasing,
then
$$\sum_{m=k_1}^\infty  \frac{\tilde u_{k_1,m}}{q_m(1-\tilde u_{k_1,m})}
H(\tilde u_{k_1,m+1})\le \sum_{m=k_1}^\infty  
\frac{u_{k,m}}{q_m(1-u_{k,m})}H(u_{k,m+1})<\infty.$$
Vice versa, given $k$, 
if $k_1>k$ is big enough and $m\ge k_1$, then
$u_{k_1, m}/\tilde u_{k, m}<1$, i.e., if~(\ref{Srel0}) holds
with $\tilde u_{k, m}$ instead of $u_{k, m}$, then it also holds
with $u_{k_1, m}$. 

Now we can restate  
the conditions (Y0), (Y1), (S) as follows:

$(Y0')_{a-b}$ There is $C>0$, such that, for all $m$ large enough,
\begin{equation}\label{Yrel0'}
|t_m|<\min\{\frac{C}{q_0...q_{m-1}}, 
|p_{m-1}q_{m-1}|\exp(-\frac{q_0...q_{m-2}}{C})\}.
\end{equation}

(Y1') There exist $\beta>0$,  $C_1>0$, such that
\begin{equation}\label{Yrel1''}
\limsup_{m\to \infty}\frac{q_m}
{\max\{(q_0...q_{m-1})^2, \exp(\beta q_0...q_{m-2})\}}>C_1,
\end{equation}

(S') For some $k\ge 0$,
\begin{equation}\label{Srel0'}
\sum_{m=k}^\infty  \frac{\tilde u_{k,m}}{q_m(1-\tilde u_{k,m})}
H(\tilde u_{k,m+1})<\infty, 
\end{equation}
where 
\begin{equation}\label{Rintro0'}
\tilde u_{k,m}=|32 t_{m+1} 
\max\{2 q_k...q_m, \ \frac{2^{q_k...q_{m-1}/2}}{|p_m|q_m}\}|^{1/q_m}, 
m\ge k.
\end{equation}
(We set $q_k...q_{m-1}=1$, if $m=k$.)
It is enough to find a tail $T_{k_0}=\{t_m\}_{m=k_0}^\infty$ 
of the sequence $T_0=\{t_m\}_{m=0}^\infty$, which satisfies
corresponding conditions of Lemma~\ref{notlc} with $n=1$.
Namely,
let us start with the $1$-hyperbolic component $W_0$ 
(the main cardioid) and the
tail $T_{k_0}=\{t_{k_0},t_{k_0+1}...\}$ in place of $W$ and 
$T_0=\{t_0,t_1,...\}$, respectively.
Then we get a sequence of hyperbolic components
$W_0^{k_0,m}$, $m\ge k_0$, where $W_0^{k_0,k_0}=W_0$ and,
for $m>k_0$, the closure of $W_0^{k_0,m}$
touches the closure of $W_0^{k_0,m-1}$ at the point $c_{k_0,m-1}$ 
with internal argument $t_{m-1}$.
By a well known straightening procedure, see~\cite{DH3} and
the proof of Theorem~\ref{intromlc}, 
the following implications hold.
If the sequence of parameters $c_{k_0,m}$ converges,
as $m\to\infty$, to some $c_{k_0, *}$, then the sequence $c_m$ also
converges,
to some $c_\ast$. If, moreover, $M$ is locally connected at $c_{k_0,*}$, then
$M$ is locally connected at $c_\ast$, and if the Julia set $J_{c_{k_0, *}}$ 
is not locally connected, then $J_{c_\ast}$ is not locally connected, too.

Thus to prove the theorem it is enough to find a tail $T_{k_0}$,
such that: (a') the condition $(Y0')_{a-b}$ 
for the whole sequence $T_0$ implies
the condition~(\ref{Yrel1}) for the tail $T_{k_0}$,
(b') the condition (Y1') for $T_0$ implies the condition
~(\ref{Yrel1'}) for $T_{k_0}$, and 
(c') the conditions $(Y0')_{a-b}$ and (S') for $T_0$
implies the condition~(\ref{exp1}) for $T_{k_0}$.
Notice that (a') and (b') are obviously true, for any $k_0$
large enough, because $q_0...q_{k_0-1}\to \infty$ as $k_0\to \infty$.
Let us show (c').

Assume that the condition (S') holds,
for a fixed large enough $k$.
Since $\tilde u_{k,m}$ is decreasing in $k$,
for every $k_0$ large enough,
$$\sum_{m=k_0}^\infty  
\frac{\tilde u_{k_0,m}}{q_m(1-\tilde u_{k_0,m})}H(\tilde u_{k_0,m+1})
<\frac{1}{200}.$$
Thus to satisfy~(\ref{exp1}) for some $T_{k_0}$ instead 
of $T_0$,
it is enough to check that $|t_{k_0}|H(\tilde u_{k_0,k_0})$ tends to zero
as $k_0$ tends to $\infty$. Using again that $\tilde u_{k,m}$ decreases 
as $k$ increases,
it follows from (S'), that, for a fixed $k$ and $k_0\to \infty$,
$$\frac{\tilde u_{k_0-1,k_0-1}}{q_{k_0-1}(1-\tilde u_{k_0-1,k_0-1})}
H(\tilde u_{k_0,k_0})
\le \frac{\tilde u_{k,k_0-1}}{q_{k_0-1}(1-\tilde u_{k,k_0-1})}
H(\tilde u_{k,k_0})\to 0$$
On the other hand, using $(Y0')_{a-b}$
it is easy to see that, as $k_0\to \infty$, 
$$|t_{k_0}|/(\frac{\tilde u_{k_0-1,k_0-1}}
{q_{k_0-1}(1-\tilde u_{k_0-1,k_0-1})})<
\frac{|t_{k_0}|q_{k_0-1}}{\tilde u_{k_0-1,k_0-1}}\to 0.$$
This completes the proof of 
Theorem~\ref{intronlc}.

\section{Non-primitive components}\label{nprim}
\subsection{Proof of Theorem~\ref{univ1}}\label{univ1pr}
Let $W=Z(t_0)$, $Z$ be an $n_0$-hyperbolic component, and 
$t_0=p_0/q_0\not=0$.
By Theorem~\ref{univ}(c),
the function $\psi_Z$ extends to a univalent map
defined in $B(0,1)\cup \exp(B(2\pi i t_0, r_0))$, 
where $r_0=dist(2\pi i t_0,
\partial \tilde \Omega^{\log}_{n_0})$.
We would like to apply Lemma~\ref{q}. The radius $r$ in Lemma~\ref{q}
will be specified as follows: $r=\min\{r_0, r(n_0, p_0/q_0)\}$,
where $r(n_0, p_0/q_0)=\min\{1/(2n_0 q_0^3), |p_0/q_0|2^{-n_0/2}\}$. 
Let us check the conditions of  Lemma~\ref{q} with this specific $r$,
with $X=1$, and with
$Z$, $p_0/q_0$, $W$ in place of $W$, $p/q$, $W(p/q)$
respectively.
Indeed, by the above and by
Lemmas~\ref{r}-\ref{rb}, we have
that $\psi_Z$ has a univalent extension
to $B(0,1)\cup U$, where $U=\exp(B(2\pi i t_0, r))$, and,
furthermore, the domain $V=\psi_Z(U)$ satisfies the conditions
(a)-(b) of Lemma~\ref{q}. Therefore, we indeed can apply Lemma~\ref{q}
with these data.
We conclude, that: 
(i) the function $\rho_W$ extends to a holomorphic function defined in
the domain $\hat V=V\cup W^*\cup Z$, (ii) 
the function $\Psi_{Z, t_0}=\rho_W\circ \psi_Z^{\log}$ is defined
and univalent
in $B(2\pi i t_0, \Lambda_0 r)$, and $2q_0^2/3<|\Psi_{Z, t_0}'|<3q_0^2/2$
in $B(2\pi i t_0, \Lambda_0 r)$, and (iii)
the function $\psi_W$
has a univalent extension to $B(0,1)\cup B(1, 2q_0^2 \Lambda_0 r/3))$
(where $0<\Lambda_0<1$ is a universal constant). 
Now, applying Theorem~\ref{univ}(c) to the 
$n_0 q_0$-hyperbolic component $W$, we get that $\psi_W$
is univalent in $\tilde\Omega_{n_0q_0}$. This along with (iii) gives us that
$\psi_W$ is holomorphic in the domain
$$\Omega:=\tilde\Omega_{n_0q_0}\cup B(0,1)\cup 
B(1, \frac{2}{3}q_0^2 \Lambda_0 r))
=\tilde\Omega_{n_0q_0}\cup B(1, \frac{2}{3}q_0^2 \Lambda_0 r).$$
Let us show that $\psi_W$ is univalent in $\Omega$.
Indeed, by Theorem~\ref{univ}(c), $\psi_W(\tilde\Omega_{n_0q_0})\subset W^*$,
and by (ii)-(iii), 
$$\psi_W(B(1, \frac{2}{3}q_0^2 \Lambda_0 r))=
\psi_Z^{\log}\circ \Psi_{Z, t_0}^{-1}(B(1, \frac{2}{3}q_0^2 \Lambda_0 r))
\subset \psi_Z^{\log}(B(2\pi t_0, \Lambda_0 r)) \subset V,$$
therefore, $\psi_W(\Omega)\subset \hat V$. But (i) tells us that 
$\rho_W$ is well-defined in $\hat V$. Then $\rho_W$ is a
well-defined inverse function to $\psi_W$ in $\Omega$, i.e., 
$\psi_W$ is univalent in $\Omega$.

It remains to check that the domain $\Omega_{n_0, t_0}$ introduced
in Theorem~\ref{univ1} is a subset of $\Omega$.
It is easy to check that, for some $K>0$ and all $n_0, q_0$,
$K^{-1} P<r_0=dist(2\pi i t_0, \partial \tilde \Omega^{\log}_{n_0})<K P,$
where
$$P=\min\{ \frac{n_0 |t_0|}{4^{n_0}}, \frac{t_0^2}{n_0}\}.$$
Obviously, 
$1/(2 n_0 q_0^3)<t_0^2/n_0$ and $n_0 |t_0| 4^{-n_0}<|t_0| 2^{-n_0/2}$.
Combining this, we get that, for some universal $K_1>0$,
$$\frac{2}{3}q_0^2 \Lambda_0 r>K_1q_0^2
\min\{\frac{n_0 |t_0|}{4^{n_0}}, \frac{1}{n_0 q_0^3}\}=
K_1\min\{\frac{n |p_0|}{4^{n_0}}, \frac{1}{n}\}.$$
Taking a bit smaller $0<\tilde K<K_1$
and putting $d=\tilde K\min\{n |p_0| 4^{-n_0}, n^{-1}\}$,
we have that $\exp(B(0, d))\subset B(1, 2q_0^2 \Lambda_0 r/3)$.
Recall that $\Omega_{n_0, t_0}$ 
is defined by its log-projection
as $\tilde\Omega_{n_0q_0}^{\log}\cup B(0, d)$.
Hence,
indeed, $\Omega_{n_0, t_0}\subset \Omega$,
while $\psi_W$ is univalent in $\Omega$.
Finally, it is easy to show that for $q_0$ large enough
$B(0, d)$ covers the portion of the disk
$\{L: |L-R_{n_0q_0}|<R_{n_0q_0}\}$,
which is deleted from $\Omega_{n_0q_0}^{\log}$
(see Theorem~\ref{univ}(c)), hence,
$\tilde\Omega_{n_0q_0}^{\log}$ above can be replaced by
$\Omega_{n_0q_0}^{\log}$. 

\subsection{Proof of Theorem~\ref{limb1}}\label{limb1pr}
The proof consists of a consideration of several cases
using Theorem~\ref{univ1} and Theorem~\ref{univ}.
Note that in some cases we get much better bounds
for the size of the limb $L(W, p/q)$. Keeping the notations of 
Subsection~\ref{univ1pr},
$W=Z(p_0/q_0)$, where $Z$ is an $n_0$-component, and so 
$W$ is an $n$-component, where $n=n_0 q_0$.
Denote by $D$ the diameter of $L(W, p/q)$.

If $q\le 8^n$, the bound holds trivially, with $\tilde A=4$.
So, in what follows, $q>8^n$. 

Assume $\frac{n |p|}{q}\ge \frac{d}{1000}$, where 
$d=\tilde K \min\{ \frac{n |p_0|}{4^{n_0}}, \frac{1}{n}\}$
is taken from Theorem~\ref{univ1}. 
If $n \frac{|p|}{q}\ge \frac{\tilde K}{1000 n}$, then, by Theorem~\ref{limb},
$D\le A \frac{4^n}{|p|}\le A (1000 n^2 \tilde K^{-1}) \frac{4^n}{q}\le 
\tilde A \frac{8^n}{q}$, 
for
some $\tilde A$ independent on $n, q$. If
$n \frac{|p|}{q}\ge \frac{\tilde K n |p_0|}{1000 \ 4^{n_0}}$, then similarly
$$D\le A \frac{4^n}{|p|}\le 1000 A \frac{4^{n_0+n}}{\tilde K |p_0|q}\le 
\tilde A \frac{8^n}{q},$$ 
for
some $\tilde A$ independent on $n, q$.
Hence, one can assume that 
\begin{equation}\label{sm}
n |t|=n \frac{|p|}{q}<\frac{d}{1000}.
\end{equation}
By Theorem~\ref{univ1} and Koebe, for every $L\in B(0, d/2)$,
\begin{equation}\label{111}
|(\psi_W^{\log})'(L)|\le 81|(\psi_W^{\log})'(0)|.
\end{equation}
In turn, by Theorem~\ref{univ}(a) and~(\ref{gu}),
\begin{equation}\label{222}
|(\psi_W^{\log})'(0)|=q_0^{-2} |\psi_Z'(\exp(2\pi i t_0))|\le
q_0^{-2} \frac{B_0 4^{n_0}(1+o(1))}{n_0 |\exp(2\pi i t_0)-1|}\le
q_0^{-2} \frac{B_1 4^{n_0}}{n_0 |p_0/q_0|}<\frac{B_1 4^{n_0}}{n},
\end{equation}
for some absolute constant $B_1>0$.
Let us show that Yoccoz's circle 
$Y_n(t)=\{L: |L-(2\pi it+\frac{n\log 2}{q})|<\frac{n\log 2}{q}\}$
is covered by $B(0, d/2)$. 
First, $2n\log 2/q<2n|t|<d/2$, by~(\ref{sm}). 
Hence, it is enough to check that
$(2\pi t)^2+(n\log 2/q)^2<(d/2-n\log 2/q)^2$, or
$(2\pi t)^2<(d/2)^2(1-4n\log 2/(q d))$, and this holds by~(\ref{sm}).
Thus $Y_n(t)\subset B(0, d/2)$. Therefore, by~(\ref{111})-(\ref{222}),
$$D<2 n (\log 2/q) 81 B_1 \frac{4^{n_0}}{n}<200 B_1 \frac{8^n}{q}.$$
\section{Appendix: geometric bounds for the multiplier}\label{peter}
\subsection{Periodic points on the boundary of a basin of attraction}
\label{apcon}
Let us fix an $n$-hyperbolic component $W$. For
$c\in W$, the map $f_c$ has the attracting periodic orbit $O(c)$ 
of period $n$, and $\rho_W(c)$
denotes its multiplier. Given a rational number $p/q\not=0$,
consider the point $c(W, p/q)$ of $\partial W$ with the internal argument
$p/q$. As we know, see the beginning of Section~\ref{bif},
for $c$ near $c(W, p/q)$, there is a unique periodic
orbit $O_c^{p/q}$ of $f_c$ of period $nq$, which collides with $O(c)$
as $c\to c(W, p/q)$. 
Its multiplier $\rho_{W(p/q)}(c)$ extends to a function,
which is defined and holomorphic in the union $W\cup \hat B\cup W(p/q)$,
where $\hat B$ is a small neighborhood of $c(W, p/q)$.
Here we prove the inequality (\ref{inside}) of Section~\ref{bif}:
\begin{theo}
For $c\in W$, such that $\rho_W(c)\not=0$,
\begin{equation}\label{insideap}
\frac{|\log\rho_{W(p/q)}(c)|^2}{\log|\rho_{W(p/q)}(c)|}
<q^2\frac{|\log\rho_W(c)-2\pi i p/q|^2}
{-\log|\rho_W(c)|},
\end{equation}
for some branch of $\log\rho_{W(p/q)}(c)$ and any branch of $\log\rho_W(c)$.
\end{theo}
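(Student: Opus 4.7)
The plan is to recognize (\ref{insideap}) as Julia's lemma (half-plane version) applied to an auxiliary self-map of a half-plane built from the two multiplier functions. For $c\in W$ with $\rho_W(c)\ne 0$, pick any branch of $L_1=\log\rho_W(c)$, so that $\mathrm{Re}\,L_1<0$. Consider the composition $\Psi:=\Psi_{W,p/q}=\rho_{W(p/q)}\circ\psi_W\circ\exp$ introduced in Section~\ref{bif}. Since $\psi_W\circ\exp$ is holomorphic and $2\pi i$-periodic from the left half-plane onto $W$, and $\rho_{W(p/q)}$ is holomorphic on the simply connected domain $W\cup \hat B\cup W(p/q)^*$, $\Psi$ is holomorphic on the left half-plane and extends holomorphically across the boundary point $2\pi i p/q$ with $\Psi(2\pi i p/q)=1$. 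Because a quadratic polynomial carries at most one non-repelling cycle (Fatou), $O_c^{p/q}$ is repelling for $c\in W$, so $|\Psi|>1$ on the left half-plane; as that domain together with a neighborhood of $2\pi i p/q$ is simply connected and $\Psi$ avoids $0$, the branch $F:=\log\Psi$ with $F(2\pi i p/q)=0$ is well defined and maps the left half-plane into the right half-plane. Set $L_2:=F(L_1)$; this is the branch of $\log\rho_{W(p/q)}(c)$ to which the statement refers.

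By (\ref{gu}), $\Psi'(2\pi i p/q)=\rho_0\cdot(d\rho_{W(p/q)}/d\rho_W)(c_0)=-q^2$, hence $F'(2\pi i p/q)=-q^2$. Under the reflection $\tilde z:=2\pi i p/q-L_1$, which sends the left half-plane to the right half-plane, define $g(\tilde z):=F(2\pi i p/q-\tilde z)$. Then $g$ is a holomorphic self-map of the right half-plane, extending across the boundary point $0$ with $g(0)=0$ and $g'(0)=q^2>0$. Julia's lemma in right-half-plane form yields, for every $\tilde z$ in the right half-plane,
\[
\frac{|g(\tilde z)|^2}{\mathrm{Re}\,g(\tilde z)}\ \le\ q^2\,\frac{|\tilde z|^2}{\mathrm{Re}\,\tilde z},
\]
with equality at some interior point only if $g$ is a M\"obius automorphism of the right half-plane fixing $0$. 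Substituting $\mathrm{Re}\,\tilde z=-\log|\rho_W(c)|$, $\mathrm{Re}\,g=\log|\rho_{W(p/q)}(c)|$, $|\tilde z|=|L_1-2\pi i p/q|$ and $|g|=|L_2|$ turns this into (\ref{insideap}) with $\le$ in place of $<$.

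To upgrade $\le$ to strict, suppose equality held somewhere; then $g$ is M\"obius, so $F$ is M\"obius in $L_1$. But $\Psi=\rho_{W(p/q)}\circ\psi_W\circ\exp$ is $2\pi i$-periodic in $L_1$, so $F(L_1+2\pi i)-F(L_1)=2\pi i\,k$ for some fixed $k\in\mathbb{Z}$. A M\"obius transformation whose increment under $L_1\mapsto L_1+2\pi i$ is constant must be affine with integer slope, and the constraints $F(2\pi i p/q)=0$, $F'(2\pi i p/q)=-q^2$ then force $F(L_1)=-q^2 L_1+2\pi i pq$, whence $\Psi(L_1)=\exp(-q^2 L_1)$. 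Evaluating at $L_1=0$, i.e.\ at the root $c_W$ of $W$, yields $\rho_{W(p/q)}(c_W)=1$, producing a non-repelling $nq$-cycle of $f_{c_W}$ on top of the parabolic $n$-cycle at $c_W$ (they are distinct since $q\ge 2$), and this contradicts Fatou's bound of one non-repelling cycle for a quadratic polynomial. Hence the extremal case is ruled out and the inequality is strict.

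The main obstacle is exactly this last step. The $\le$ version is a direct application of Julia's lemma once the setup has been repackaged as a self-map of the right half-plane with boundary fixed point $0$ and angular derivative $q^2$; what requires genuine input is excluding the M\"obius equality case, and this is where the global $2\pi i$-periodicity of $\Psi$ (which collapses the admissible M\"obius transformations down to a single affine model) combines with the Fatou bound at the root $c_W$ to deliver the strict inequality.
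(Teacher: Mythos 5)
Your argument is correct, but it follows a genuinely different route from the paper's. The paper works in the \emph{dynamical} plane: for fixed $c\in W$ it passes to the immediate basin component $\Omega\ni 0$, conjugates $f_c^n|_\Omega$ by a Riemann map to the Blaschke product $B_\rho$, and chains together two quoted inequalities --- Theorem A of \cite{pet1}, bounding $\log\Lambda$ (the multiplier of the rotation set $\Delta(p/q)\subset S^1$) by $q^2|L-2\pi ip/q|^2/(2|Re(L)|)$, and Theorem 2 of \cite{levcoll}, a Pommerenke-type inequality bounding $|\log\tilde\rho_c|^2/\log|\tilde\rho_c|$ by $2\log\Lambda$ for the boundary orbit obtained as radial limits of $R^{-1}$ on $\Delta(p/q)$ --- and then identifies that boundary orbit with $O^{p/q}(c)$ by a continuity argument near $c_0$. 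You instead work entirely in the \emph{parameter} plane, reading~(\ref{insideap}) as Julia's lemma for the half-plane self-map $\log\Psi_{W,p/q}$ with boundary fixed point $2\pi i p/q$ and angular derivative $-q^2$ supplied by~(\ref{gu}); strictness comes from excluding the M\"obius equality case rather than from strict inequalities in the quoted theorems. Your route is shorter and uses only ingredients already set up in Section~\ref{bif} (holomorphy of $\rho_{W(p/q)}$ on $W$, Fatou's bound forcing $|\rho_{W(p/q)}|>1$ there, and the Guckenheimer--McGehee derivative), whereas the paper's route does not need~(\ref{gu}) at all and localizes the input to the dynamics of a single $f_c$. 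Two small polishing remarks: (i) the identity that for every $c\in W$ the continuation $\rho_{W(p/q)}(c)$ is the multiplier of a genuine, hence repelling, $nq$-cycle deserves a sentence (no collisions occur in $W$ since $O_W(c)$ is the unique non-repelling cycle and its multiplier avoids roots of unity there), as your whole half-plane setup rests on it; (ii) in the equality case it is cleaner to observe that the forced model $\Psi(L)=\exp(-q^2L)$ blows up as $\mathrm{Re}\,L\to-\infty$ while the true $\Psi$ tends to the finite value $\rho_{W(p/q)}(\psi_W(0))$ at the center of $W$ --- this avoids having to justify continuity of $\rho_{W(p/q)}$ at the root $c_W$, although your version of that step is also salvageable since $f_{c_W}$ has no parabolic $nq$-cycle.
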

\begin{proof}
This follows from Theorem A of~\cite{pet1} and 
Theorem 2 of~\cite{levcoll} (see also~\cite{Pe}).
Let us fix $c\in W$ with
$\rho_W(c)\not=0$, and consider the dynamical plane of $f_c$.
Denote by $\Omega$ the component of the immediate
basin of attraction of $O(c)$, which contains the critical point $0$
and a point $b\in O(c)$. 
A Riemann map $R: \Omega\to B(0,1)$ normalized by 
$R(b)=0$ and with an appropriate $arg R'(b)$
conjugates $f_c^n: \Omega\to \Omega$ to the
Blaschke product $B_\rho(z)=z(z+\rho)/(1+\bar\rho z)$,
where $\rho=\rho_W(c)$. The Julia set of $B_\rho$ is the unit circle $S^1$.
Given a rational number $p/q\not=0$, the map $B_\rho:S^1\to S^1$
has a unique rotation set $\Delta(p/q)$ with the rotation number $p/q$.
(This means that the restriction $B_\rho: \Delta(p/q)\to \Delta(p/q)$
can be lifted and extended to an increasing continuous map 
$\tilde B:{\bf R}\to {\bf R}$, such that $\tilde B(x+1)=\tilde B(x)+1$,
which then defines in a usual manner the rotation number $p/q$, 
see e.g.~\cite{BS}.)
The set $\Delta(p/q)$ is a repelling periodic orbit of $B_\rho$
of period $q$. 
Let $\Lambda$ be the multiplier of $\Delta(p/q)$, so that $\Lambda>1$.
The following bound is proved in~\cite{pet1}, Theorem A.
For every branch $L$ of the logarithm $\log\rho$,
\begin{equation}\label{blyabound}
0<\log\Lambda<q^2\frac{|L-2\pi i p/q|^2}{2|Re(L)|}.
\end{equation}
The inverse map $R^{-1}: B(0,1)\to \Omega$ has a radial limit
denoted by $R^{-1}(w)$
at every repelling periodic point $w\in S^1$ of $B_\rho$, 
and the point $R^{-1}(w)$ is either 
repelling or parabolic periodic point of 
$f_c^n:\partial \Omega\to \partial \Omega$, see~\cite{pomm},~\cite{Pe}.
(Alernatively, here and below one could use the fact that the 
boundary $\partial\Omega$ is locally connected, because $f_c$ is
a hyperbolic polynomial; in turn, this implies that $R^{-1}$ has
a homeomorphic continuation to the boundary $\partial B(0,1)$.)
The set $\tilde O_c(p/q)=\{R^{-1}(w): w\in \Delta(p/q)\}$
is a periodic orbit
of $f_c^n$ of period $q$, which lies in $\partial \Omega$
and is repelling, because $f_c$ is hyperbolic. Denote
by $\tilde \rho_c$ its multiplier. Let $w_0\in \Delta(p/q)$ and
$z_0=R^{-1}(w_0)$. 
Now we proceed as in~\cite{levcoll}. Linearizing $B^q_\rho$
near $w_0$ by a conformal map $g$, $g(0)=w_0$, 
we find a map $h=R^{-1}\circ g$, which is conformal near $0$ and
such that $h$ maps a small semidisk
$D(\epsilon)=\{w: |w|<\epsilon, Im(w)>0\}$ into $\Omega$
and conjugates $w\mapsto \Lambda w$ with 
$f_c^{nq}:\Omega\to \Omega$ wherever it makes sense.
Let us fix also a small $r>0$ so that $f_c^{nq}$ is linearizable 
in $B(z_0, r)$
and let $\mu: B(z_0, r)\to {\bf C}$, $\mu(z_0)=0$, be a conformal map,
which conjugates $f_c^{nq}$ with its linear part $z\mapsto \tilde \rho_c z$.
As $R^{-1}$ has the radial limit $z_0$ at $w_0$, by Lindelof' theorem,
$R^{-1}(w)\to z_0$ uniformly in any Stolz angle at $w_0$. 
Hence, $h(w)\to z_0$ uniformly in
$\{|w|<\epsilon, arg(w)\in (t, \pi-t)\}$, for any $t\in (0,\pi/2)$.
Thus, for any $t\in (0,\pi/2)$ there is $\epsilon_t>0$, such that
$h(D(\epsilon_t, t))\subset B(z_0, r)$, where
$D(\epsilon_t, t)=\{|w|<\epsilon_t, arg(w)\in (t, \pi-t)\}$.
Set $U=\cup_{t}D(\epsilon_t, t)$ and 
$V=\tilde h(U)$,
where $\tilde h=\mu\circ h$.
Then $U$ and $V$ are topological disks, such that 
$0\in \partial U$, $U\subset \Lambda U:=\{\Lambda z: z\in U\}$,
$\cup_{k=0}^\infty \Lambda^k U=\{z: Im(z)>0\}$, and
$0\in \partial V$, $V\subset \tilde \rho_c V$, and 
$\tilde h:\Lambda U\to \tilde \rho_c V$ 
is a conformal homeomorphism
that conjugates the linear maps
$z\mapsto \Lambda z$ and $z\mapsto \tilde\rho_c z$
on $U$. 
This is the framework of 
Theorem 2 of \cite{levcoll}. It states that under these conditions
\begin{equation}\label{lbound}
\frac{|\log \tilde \rho_c|^2}{\log |\tilde \rho_c|}
<2\log\Lambda,
\end{equation}
for some branch $\log \tilde \rho_c$.
Combining (\ref{blyabound}) and (\ref{lbound}), we get
\begin{equation}\label{blbound}
\frac{|\log \tilde \rho_c|^2}{\log |\tilde \rho_c|}
<q^2\frac{|\log\rho_W(c)-2\pi i p/q|^2}{-\log|\rho_W(c)|},
\end{equation}
for some branch $\log \tilde \rho_c$ and any branch $\log\rho_W(c)$.
Now, let us show that, when $c$ is close to $c(W, p/q)$,
then the periodic orbit $\tilde O_c(p/q)$ 
is just the periodic orbit $O^{p/q}(c)$.
Indeed, let $c\to c(W, p/q)$ radially, i.e. along the
curve $\gamma=\{\rho_W^{-1}(r e^{2\pi i p/q}), 0<r<1\}$.
Then there is a branch  of $\log\rho_W(c)$ converging to
$2\pi i p/q$ as $c\to c(W, p/q)$ and, 
by~(\ref{blbound}) we conclude that $\log \tilde\rho_c\to 0$
as $c\to c(W, p/q)$.
Since every periodic orbit of $f_{c(W, p/q)}$ other than
$O(c(W, p/q))$ is repelling, it follows by continuity, that
$\tilde O_c(p/q)$ collides with $O(c)$ as $c\to c(W, p/q)$
along $\gamma$. On the other hand, $O^{p/q}(c)$ is the only periodic
orbit with this property. Hence,  $\tilde O_c(p/q)$ and $O^{p/q}(c)$ 
coincide for $c\in \gamma$ close to $c(W, p/q)$, therefore,
they coincide for every $c\in W$ close to $c(W, p/q)$.
Thus their multipliers are equal, i.e., for every $c\in W$, such that
$\rho_W(c)\not=0$,
$\tilde \rho_c$ in~(\ref{blbound}) can be replaced
by $\rho_{W(p/q)}(c)$.
\end{proof} 
\subsection{Disconnected Julia set}\label{apdis}
Suppose $c\in {\bf C}\setminus M$, i.e.,
$J_c$ is totally disconnected. 
Let $B_c$ be the Bottcher coordinate function
for $f_c$ at infinity, i.e., $B_c$ is defined and univalent
in a neighborhood of infinity, such that $B_c(z)/z\to 1$ as $z\to \infty$
and $B_c(f_c(z))=[B_c(z)]^2$. The function
$G_c(z)=\log |B_c(z)|=\lim_{n\to \infty}\frac{1}{2^n}\log|f_c^n(z)|$
extends to a harmonic function defined in the basin of infinity
$A_c={\bf C}\setminus J_c$, such that $G_c(z)\to 0$ as 
$z\to \partial A_c$. 
In turn, $B_c$ extends to a univalent map defined in the domain
$\{z: G_c(z)>G_c(0)\}$. In particular, the value $B_c(c)$ is well-defined.
(By~\cite{DH1}, the map
$c\mapsto B_c(c)$ is a holomorphic
isomorphism of the complement of $M$ onto the complement
of $\overline{B(0,1)}$.)
Let us introduce parameters $a_c>0$ and $t_c\in [0,1)$ such that
$$B_c(c)=\exp(2 a_c+2\pi i t_c).$$
Note that $a_c=G_c(0)$ and $t_c$ is the argument of the 
parameter ray to $M$, which passes through $c\in {\bf C}\setminus M$. 

Let $z$ be a periodic point of $f_c$ of period $n$,
and $\rho=(f_c^n)'(z)$ its multiplier.

If one puts in~\cite{EL}, Theorem 1.6,
$d=2$, $a=b=a_c$ and $k=0$, we get:
\begin{theo}\label{disbelow}
$$\frac{1}{n}\log |\rho|\ge a_c.$$
\end{theo}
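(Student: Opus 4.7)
The plan is to specialize Theorem 1.6 of~\cite{EL} to the quadratic setting with the parameter choices indicated in the excerpt. That theorem establishes a lower bound on the multiplier of any repelling periodic cycle of a polynomial in terms of the values of the dynamical Green's function at the critical points and an "escape level" for the orbit. The parameter matching is $d=2$ (degree), $a=b=a_c$ (the Green's value at the unique critical point $0$, which equals the escape level of $c$), and $k=0$ (the orbit lies on $\{G_c = 0\}$). Substituting these reduces the general [EL] inequality to $\frac{1}{n}\log|\rho| \ge a_c$. Note that the hypothesis $c \notin M$ ensures $J_c$ is a Cantor set and $f_c$ is uniformly expanding on $J_c$, so the periodic point $z$ is automatically repelling and the cited theorem applies.

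To make the proof readable rather than a naked citation, I would sketch the underlying geometric content as follows. Set $D_k = \{z : G_c(z) < a_c/2^k\}$ for $k \ge 0$. The functional equation $G_c \circ f_c = 2 G_c$ gives $G_c(c) = 2 a_c > a_c$, so the critical value $c$ avoids every $D_k$. Hence each restriction $f_c : D_{k+1} \to D_k$ is an unbranched two-to-one covering, and iterating yields an unbranched $2^n$-to-one covering $f_c^n : D_n \to D_0$. The periodic point $z$ lies in $J_c \subset \bigcap_k D_k$ and is fixed by $f_c^n$. Lifting the hyperbolic metric of $D_0$ to $D_n$ along this covering gives the exact identity $\lambda_{D_n}(z) = \lambda_{D_0}(z)\,|\rho|$, while the inclusion $D_n \subset D_0$ yields $\lambda_{D_n}(z) \ge \lambda_{D_0}(z)$. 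Together these give the crude bound $|\rho| \ge 1$.

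The main obstacle is upgrading this crude bound to the sharp estimate $|\rho| \ge e^{n a_c}$. The correct refinement exploits the self-similar structure $D_{k+1} = f_c^{-1}(D_k)$: the conformal ``gap'' between successive $D_k$'s has an exponential contribution of size $a_c$, and accumulating this over $n$ levels is what produces the factor $e^{n a_c}$. Making this precise requires analysing the Böttcher coordinate $B_c$ on the outer boundary $\{G_c = a_c\}$ (where it is a conformal uniformization), comparing extremal lengths of the annular regions $D_0 \setminus \overline{D_n}$ to those of standard round annuli, and exploiting $G_c \circ f_c = 2 G_c$ to convert the ``depth'' of $z$ inside $D_n$, measured by $G_c$, into a multiplicative estimate on $|\rho|$. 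This quantitative step is exactly the content of [EL, Theorem 1.6], so invoking it with the parameters $d=2$, $a=b=a_c$, $k=0$ is the cleanest completion of the argument.
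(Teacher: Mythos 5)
Your proposal is correct and matches the paper exactly: the paper derives Theorem~\ref{disbelow} purely by specializing Theorem 1.6 of~\cite{EL} with $d=2$, $a=b=a_c$, $k=0$, which is precisely your citation. The additional covering-map/hyperbolic-metric sketch you supply is consistent supplementary exposition (it correctly delegates the sharp quantitative step to~\cite{EL}), but the core argument is the same one-line reduction.
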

Now, we give a bound for $\log\rho$ that involves $a_c$, $t_c$,
and a rotation number of $z$,
which is similar to the Yoccoz bound~(\ref{yoccircle})
though holds for non-connected Julia sets.
We consider dynamical rays to the disconnected set $J_c$
(see e.g.~\cite{levprz}). Each ray is either
an unbounded smooth curve, which crosses
every level curve
$\{z: G_c(z)=r\}$ orthogonally and terminates in $J_c$, or
a one-sided limit of such smooth rays. In the latter case, 
the ray is called non-smooth,
or left (right) if it is a limit of smooth rays from the
left (right).  A ray is non-smooth if and only if it contains
a critical point of the function $G_c$, i.e.,
a preimage by some $f_c^k$, $k\ge 0$, of the critical point $0\in A_\infty$.
Every ray has a well-defined angle (or argument) though
some angle may correspond to two non-smooth rays, which are left and 
right limits of smooth ones.  
Denote by $\Lambda(z)$
the set of angles of dynamical rays that land at $z$.
The following is proved in~\cite{levprz}.
The set $\Lambda(z)$ is a non-empty closed nowhere dense subset of the 
unit circle $S^1={\bf R}/{\bf Z}$.
It is finite if and only if it contains
a rational angle $t_0$. In this case, $t_0$ is a periodic point
under the doubling map $\sigma: t\mapsto 2t(mod \ 1)$ of some period $nq$,
$q\ge 1$,
and every other point $t\in \Lambda(z)$ is periodic by $\sigma$ of the
same period. The rotation number $p/q$ of $z$ 
is the order at which $f_c^n$ permutes 
(locally) each cycle of $q$ dynamical rays that land at $z$.
Let us assume that the rotation number of $z$ is $p/q$,
and choose $t_0\in \Lambda(z)$. Consider the following periodic orbit
of the map $\sigma^n$:
$\Lambda_{t_0}=\{\sigma^{kn}(t_0): k=0,1,...,q-1\}$. It is a subset
of $\Lambda(z)$. We associate to $\Lambda_{t_0}$ 
an angle $\alpha\in (0, \pi)$
as follows~\cite{Leyo}. First, the map $\sigma$ has a natural
extension to 
a half-cylinder
$\tilde S=\{(x, y): x\in S^1, y\ge 0 \}$ by $\sigma(x,y)=(2x(mod \ 1), 2y)$.
Consider the set $\{N_w\}$ of all vertical segments
in $\tilde S$ with top points $w$,
over all $w$ such that $\sigma^j(w)=w_c:=(i a_c/\pi, t_c)$,
for some $1\le j\le n$. Each segment $N_w$ is a vertical
``needle'' with the top $w$ and a base point $x_w\in S^1$.
Given $N_w$, let $x_l$ be the point of the set $\Lambda_{t_0}$,
which is the closest to the base point $x_w$, $x_w\not=x_l$,
from the left 
(measured in $S^1$). Consider the triangle with the
vertices $w$, $x_w$, and $x_l$.
Let $\alpha_l(N_w)$ be its angle at the vertex $w$.
Define $\alpha_l$ as the minimum of the $\alpha_l(N_w)$, over
all $N_w$. The angle $\alpha_r$ is defined similarly 
using the points from the right of $x_w$. Then the angle $\alpha$
is said to be $\alpha_l+\alpha_r$, if the ray of argument $t_0$
is smooth, and otherwise either 
$\alpha_l$ or $\alpha_r$, depending on whether left or right 
ray of the angle $t_0$ lands at $z$.
We have the following generalization of~(\ref{yoccircle}):
\begin{theo}\label{yoccircledis} (see~\cite{Leyo})
A branch of $\log\rho$ is 
contained in the disk
$$
Y_n(p/q, \alpha)=\{L: |L-(2\pi i\frac{p}{q}+\frac{n\pi\log 2}{q\alpha})|<
\frac{n\pi\log 2}{q\alpha}\},$$
\end{theo}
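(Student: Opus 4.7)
The plan is to adapt Yoccoz's argument for the classical disk~(\ref{yoccircle}) to the disconnected setting, the difference being that on the ``target'' side we must work inside the half-cylinder $\tilde S$ with the critical needles $\{N_w\}$ removed. First I would set up the local model at $z$: linearize $f_c^n$ by a univalent $\phi$ near $0$ with $\phi(\rho w)=f_c^n(\phi(w))$, and pass to the logarithmic chart $\zeta=\log w$, in which the dynamics becomes the translation $T_L:\zeta\mapsto \zeta+L$ with $L=\log\rho$. The $q$ rays in $\Lambda_{t_0}$ landing at $z$ pull back under $\phi$ to $q$ curves which, in the $\zeta$-chart, are cyclically permuted by $T_L$ with rotation number $p/q$; a fundamental domain for $T_L^q$ is a strip $U$ whose ``width'' is governed by $|qL-2\pi i p|$.

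Next I would construct a holomorphic comparison map $\Phi:U\to \tilde S\setminus\bigcup_w N_w$. In the Bottcher chart the action $f_c:A_c\to A_c$ is conjugate to $\sigma:(x,y)\mapsto(2x\bmod 1,2y)$, and the rays in $\Lambda_{t_0}$ correspond to vertical lines at the arguments $\sigma^{jn}(t_0)$. Composing the local linearization with this global conjugacy yields $\Phi$; it is holomorphic, and it avoids every needle $N_w$ because a dynamical ray landing at a repelling point is smooth and hence disjoint from the grand orbit of the critical point in $A_c$. The combinatorial data recording how each base point $x_w$ sits relative to the two nearest points of $\Lambda_{t_0}$ from the left and right is exactly what the angles $\alpha_l(N_w),\alpha_r(N_w)$ encode.

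Finally, applying the Schwarz lemma for the hyperbolic metric on the slit half-cylinder to $\Phi$ converts the ``width'' $|qL-2\pi i p|$ of $U$ into a hyperbolic-distance bound in the target; after rewriting one obtains the Yoccoz-type inclusion $L\in Y_n(p/q,\alpha)$. The angle $\alpha$ enters as the opening angle of the extremal configuration: the modulus of a relevant annulus separating two consecutive arguments of $\Lambda_{t_0}$ is pinched at the top of the closest needle, and is of order $n\log 2$ divided by a quantity proportional to $\alpha/\pi$, which produces the expected factor $\pi/\alpha$ in the radius $n\pi\log 2/(q\alpha)$. The main obstacle is identifying this sharp extremal: one must verify that the infimum over all $N_w$ is actually attained on a single needle and that, when the ray of argument $t_0$ is non-smooth (a one-sided limit of smooth rays), only $\alpha_l$ or $\alpha_r$—rather than their sum—controls the bound, since the linearizing picture then only ``sees'' the side from which the approximating smooth rays accumulate.
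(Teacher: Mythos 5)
The paper does not prove Theorem~\ref{yoccircledis} at all: it is quoted from~\cite{Leyo}, and the surrounding text only sets up the definitions (the half-cylinder $\tilde S$, the needles $N_w$, the angles $\alpha_l,\alpha_r,\alpha$) needed to state it. So there is no in-paper proof to match your sketch against; what you have written is an outline of the Pommerenke--Levin--Yoccoz strategy that~\cite{Leyo} does follow in spirit (linearize at $z$, pass to a fundamental strip/quotient annulus for $T_L^q$, compare with a region in the quotient of the basin of infinity, and convert a modulus or extremal-length estimate into the tangent-disk inequality $|qL-2\pi i p|^2/\mathrm{Re}(qL)<2n\pi\log 2/\alpha$).

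As a proof, however, the proposal has two genuine gaps. First, your justification for why the comparison map lands in $\tilde S\setminus\bigcup_w N_w$ --- ``a dynamical ray landing at a repelling point is smooth and hence disjoint from the grand orbit of the critical point'' --- is false in the disconnected setting and contradicts both the statement of the theorem (which explicitly distinguishes the case where the ray of argument $t_0$ is non-smooth) and its only application in the paper, Corollary~\ref{zero}, where $t_0$ is the argument of a one-sided, non-smooth ray. Moreover, even when the rays in $\Lambda_{t_0}$ are smooth, the two-dimensional sector between consecutive rays generically does contain precritical points; the needles are exactly the obstruction the angle $\alpha$ quantifies, and the correct mechanism is that the analytic continuation of the inverse linearizing coordinate (equivalently, of $\log B_c$) is blocked at critical points of $G_c$, not that the rays miss them. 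Second, the decisive quantitative step --- showing that the extremal-length or hyperbolic-metric comparison on the slit half-cylinder yields precisely the radius $n\pi\log 2/(q\alpha)$, with $\alpha$ defined as the minimum over \emph{all} needles of the angles $\alpha_l(N_w),\alpha_r(N_w)$ (and as $\alpha_l$ or $\alpha_r$ alone in the one-sided case) --- is only asserted (``after rewriting one obtains''), and you yourself flag the identification of the extremal configuration as an unresolved obstacle. Without that computation the disk $Y_n(p/q,\alpha)$ is not derived, so the argument as written establishes only the shape of the intended proof, not the theorem.
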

We apply this bound when $p/q=0/1$, and $t_0$ (a periodic point of
$\sigma$ of period $n$) is the argument of a non-smooth, say, left, ray. 
In particular, $t_c\in \Lambda_{t_0}$.
In this case, 
$\alpha=\alpha_l\ge \delta(n, a_c)$, where
\begin{equation}\label{anglezero}
\delta(n, a)=\arctan\frac{\pi}{(2^n-1)a}.
\end{equation}
Indeed, we check, that, for every ``needle'' $N_w$,
\begin{equation}\label{ochennado}
\alpha_l(N_w)\ge \delta(n, a_c).
\end{equation}
Let $\sigma^j(w)=w_c$, $1\le j\le n$. Denote by $s$ 
the distance between the points
$x_w$, $x_l$ (measured on $S^1={\bf R}/{\bf Z}$). 
Let us show that $s\ge 1/(2^j(2^n-1))$. Indeed, otherwise
$2^j s<1$, hence,
$s=s_0/2^j$, where $s_0$ is the distance between
$\sigma^j(x_l)$ and $t_c=\sigma^j(x_w)$.
In the considered case, $\sigma^j(x_l)$ and $t_c$ are different points
of the periodic orbit $\Lambda_{t_0}$. Since 
the distance between any two points of $\Lambda_{t_0}$
is at least $1/(2^n-1)$, then $s\ge 1/(2^j(2^n-1))$. 
Thus, in any case,
$s\ge 1/(2^j(2^n-1))$, and then 
$$\tan \alpha_l(N_w)=\frac{s}{(a_c/\pi)/2^j}
\ge \frac{1/(2^j(2^n-1))}{(a_c/\pi)/2^j}=
\tan \delta(n, a_c).$$
Therefore,~(\ref{ochennado}) is checked.
Under this setting,
Theorem~\ref{disbelow} and Theorem~\ref{yoccircledis} imply immediately:
\begin{coro}\label{zero}
Assume $t_c$ is a periodic point of $\sigma: S^1\to S^1$ of period $n$,
and a (one-sided) dynamical ray of $f_c$ of angle $t_c$ lands at a periodic
point of $f_c$ of period $n$ with multiplier $\rho$. Then,
for a branch of $\log\rho$,
\begin{equation}\label{diszero}
\log\rho\in Y_n(0, \delta(n, \frac{1}{n}\log|\rho|))=
\{L: |L-R_{n, |\rho|}| < R_{n, |\rho|} \},
\end{equation}
where 
\begin{equation}\label{rnrho}
R_{n, |\rho|}=\frac{n\pi\log 2}{\arctan\frac{n\pi}{(2^n-1)\log|\rho|}}.
\end{equation}
\end{coro}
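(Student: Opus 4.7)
The plan is to derive Corollary~\ref{zero} as the direct specialization of Theorem~\ref{yoccircledis} to rotation number $p/q = 0/1$, fed by the multiplier lower bound of Theorem~\ref{disbelow}. Under the hypothesis, $t_c$ has exact $\sigma$-period $n$, which means $q=1$, so one may take $t_0 = t_c$, and then $\Lambda_{t_0} = \{t_c\}$ is a singleton. Since the dynamical ray of argument $t_c$ is one-sided (say left, the other case being symmetric), the angle $\alpha$ appearing in Theorem~\ref{yoccircledis} reduces to $\alpha = \alpha_l$, and Theorem~\ref{yoccircledis} places a branch of $\log\rho$ in $Y_n(0, \alpha_l)$.

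The heart of the argument is the lower bound $\alpha_l \geq \delta(n, a_c)$, where $\delta(n, a) = \arctan(\pi/((2^n-1)a))$. Fix a needle $N_w$ with top $w$ satisfying $\sigma^j(w) = w_c$ for some $1 \leq j \leq n$; then $w$ sits at height $a_c/(2^j \pi)$ above its base point $x_w \in S^1$, and $\sigma^j(x_w) = t_c$. Since $x_l \in \Lambda_{t_0} = \{t_c\}$, the only choice is $x_l = t_c$, and I set $s = \dist(x_w, x_l)$. I claim $s \geq 1/(2^j(2^n-1))$. If $2^j s \geq 1$ this is immediate; otherwise $\sigma^j$ is injective on the short arc from $x_w$ to $x_l$, giving $\dist(\sigma^j(x_l), t_c) = 2^j s > 0$. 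But $\sigma^j(x_l) = \sigma^j(t_c)$ and $t_c$ are both fixed by $\sigma^n$, hence lie on the lattice $\{k/(2^n-1)\}$, and being distinct they are separated by at least $1/(2^n - 1)$. Thus $2^j s \geq 1/(2^n-1)$. In the right triangle $(w, x_w, x_l)$ with vertical leg $a_c/(2^j \pi)$ and horizontal leg $s$, this yields $\tan \alpha_l(N_w) = 2^j \pi s / a_c \geq \pi/((2^n-1) a_c) = \tan \delta(n, a_c)$, and taking the infimum over needles gives $\alpha_l \geq \delta(n, a_c)$.

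To finish, I invoke Theorem~\ref{disbelow} to get $a_c \leq (1/n) \log|\rho|$. Since $\delta(n, \cdot)$ is decreasing, $\alpha_l \geq \delta(n, a_c) \geq \delta(n, (1/n) \log|\rho|)$. The disks $Y_n(0, \alpha)$ all have radius $n\pi \log 2/\alpha$ centered at that same real number, hence are nested and shrink as $\alpha$ grows; consequently $Y_n(0, \alpha_l) \subset Y_n(0, \delta(n, (1/n)\log|\rho|))$. Substituting $\delta(n, (1/n)\log|\rho|) = \arctan(n\pi/((2^n-1) \log|\rho|))$ into the radius formula yields exactly $R_{n, |\rho|}$, producing the disk claimed in~(\ref{diszero})--(\ref{rnrho}).

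The main obstacle is the discrete spacing estimate $s \geq 1/(2^j(2^n-1))$ in the second step: its sharpness relies on the collapse $\Lambda_{t_0} = \{t_c\}$ forced by $q = 1$, which ensures $x_l = t_c$ and hence $\sigma^j(x_l) = \sigma^j(t_c)$ lies in the full $\sigma$-orbit of $t_c$, where the $1/(2^n-1)$ spacing between distinct periodic points of period dividing $n$ is available (rather than the weaker $1/(2^{nq}-1)$ that would appear in the generic case). Once this combinatorial estimate is secured, the rest is pure trigonometry together with a monotone comparison of Yoccoz disks.
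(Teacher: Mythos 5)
Your proposal is correct and follows essentially the same route as the paper: specialize Theorem~\ref{yoccircledis} to $p/q=0/1$ with $\alpha=\alpha_l$ (or $\alpha_r$), establish $\alpha\ge\delta(n,a_c)$ needle by needle via the spacing bound $s\ge 1/(2^j(2^n-1))$ on the lattice of $\sigma^n$-fixed points, and then combine with Theorem~\ref{disbelow} and the monotone nesting of the disks $Y_n(0,\alpha)$. If anything, your justification of the spacing estimate (locating $\sigma^j(x_l)=\sigma^j(t_c)$ and $t_c$ on the lattice $\{k/(2^n-1)\}$) is stated slightly more precisely than the paper's wording, which loosely attributes both points to $\Lambda_{t_0}$.
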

Note that~(\ref{diszero}) is proved in~\cite{LeSo} as well.

\end{document}